\theoremstyle{plain}
    \newtheorem{theorem}{Theorem}[section]
    \newtheorem{lemma}[theorem]{Lemma}
    \newtheorem{corollary}[theorem]{Corollary}
    \newtheorem{proposition}[theorem]{Proposition}
 \theoremstyle{definition}
    \newtheorem{definition}[theorem]{Definition}
    \newtheorem{example}[theorem]{Example}
    \newtheorem{remark}[theorem]{Remark}
\theoremstyle{remark}
\numberwithin{equation}{section}
 \DeclareMathOperator{\Tr}{Tr}
 \DeclareMathOperator{\tr}{tr}
\DeclareMathOperator{\Ad}{Ad}
\DeclareMathOperator{\ind}{index}
\DeclareMathOperator{\End}{End}
\DeclareMathOperator{\sgn}{sgn}
\DeclareMathOperator{\ch}{ch}
\DeclareMathOperator{\Todd}{Todd}
\DeclareMathOperator{\rank}{rank}
\DeclareMathOperator{\sign}{sign}
\DeclareMathOperator{\diam}{diam}
\DeclareMathOperator{\fp}{fp}
\DeclareMathOperator{\Spin}{Spin}
\DeclareMathOperator{\SO}{SO}
\DeclareMathOperator{\SL}{SL}
\DeclareMathOperator{\GL}{GL}
\DeclareMathOperator{\SU}{SU}
 \DeclareMathOperator{\Ind}{Ind}
 \DeclareMathOperator{\even}{even}
  \DeclareMathOperator{\odd}{odd}
  \DeclareMathOperator{\DInd}{D-Ind}
\DeclareMathOperator{\vol}{vol}
\DeclareMathOperator{\area}{area}
\DeclareMathOperator{\rem}{rem}
\DeclareMathOperator{\res}{res}
\begin{document}


\newcommand{\myemph}{\emph}

\newcommand{\Spinc}{\Spin^c}

    \newcommand{\R}{\mathbb{R}}
    \newcommand{\C}{\mathbb{C}}
    \newcommand{\N}{\mathbb{N}}
    \newcommand{\Z}{\mathbb{Z}}
    \newcommand{\Q}{\mathbb{Q}}
    \newcommand{\bT}{\mathbb{T}}
    \newcommand{\bP}{\mathbb{P}}

\newcommand{\g}{\mathfrak{g}}
\newcommand{\h}{\mathfrak{h}}
\newcommand{\p}{\mathfrak{p}}
\newcommand{\kg}{\mathfrak{g}}
\newcommand{\kt}{\mathfrak{t}}
\newcommand{\ka}{\mathfrak{a}}
\newcommand{\XX}{\mathfrak{X}}
\newcommand{\kh}{\mathfrak{h}}
\newcommand{\kp}{\mathfrak{p}}
\newcommand{\kk}{\mathfrak{k}}
\newcommand{\kn}{\mathfrak{n}}
\newcommand{\km}{\mathfrak{m}}
\newcommand{\kso}{\mathfrak{so}}
\newcommand{\ksu}{\mathfrak{su}}
\newcommand{\kspin}{\mathfrak{spin}}

\newcommand{\cA}{\mathcal{A}}
\newcommand{\cE}{\mathcal{E}}
\newcommand{\calL}{\mathcal{L}}
\newcommand{\calH}{\mathcal{H}}
\newcommand{\cO}{\mathcal{O}}
\newcommand{\cB}{\mathcal{B}}
\newcommand{\cK}{\mathcal{K}}
\newcommand{\cP}{\mathcal{P}}
\newcommand{\cN}{\mathcal{N}}
\newcommand{\calD}{\mathcal{D}}
\newcommand{\cC}{\mathcal{C}}
\newcommand{\calS}{\mathcal{S}}
\newcommand{\cM}{\mathcal{M}}
\newcommand{\cU}{\mathcal{U}}

\newcommand{\cCM}{\cC}
\newcommand{\PM}{P}
\newcommand{\DM}{D}
\newcommand{\LM}{L}
\newcommand{\vM}{v}

\newcommand{\sumGam}{\textstyle{\sum_{\Gamma}} }

\newcommand{\sigDg}{\sigma^D_g}

\newcommand{\Bigwedge}{\textstyle{\bigwedge}}

\newcommand{\ii}{\sqrt{-1}}

\newcommand{\Ubar}{\overline{U}}

\newcommand{\beq}[1]{\begin{equation} \label{#1}}
\newcommand{\eeq}{\end{equation}}

\newcommand{\mattwo}[4]{
\left( \begin{array}{cc}
#1 & #2 \\ #3 & #4
\end{array}
\right)
}

\title{A higher index theorem on finite-volume locally symmetric spaces}

\author{
Hao Guo, 
Peter Hochs and 
Hang Wang
}
%
%
%

%

\date{\today}

\maketitle

\begin{abstract}
Let $G$ be a connected, real semisimple Lie group. Let $K<G$ be maximal compact, and let $\Gamma < G$ be discrete and such that $\Gamma \backslash G$ has finite volume. If the real rank of $G$ is $1$ and $\Gamma$ is torsion-free, then Barbasch and Moscovici obtained an index theorem for Dirac operators on the locally symmetric space $\Gamma \backslash G/K$. We obtain a higher version of this, using an index of Dirac operators on $G/K$ in the $K$-theory of an algebra on which the conjugation-invariant terms in Barbasch and Moscovici's index theorem define continuous traces. The resulting index theorems also apply when $\Gamma$ has torsion. The cases of these index theorems for
 traces defined by semisimple orbital integrals extend to Song and Tang's higher orbital integrals, and yield nonzero and computable results   even when $\rank(G)> \rank(K)$, or the real rank of $G$ is larger than $1$.
\end{abstract}

\tableofcontents

\section{Introduction}

Consider a connected, real semisimple Lie group $G$, a maximal compact subgroup $K<G$ and a discrete subgroup $\Gamma<G$. \emph{Locally symmetric spaces} of the form $X = \Gamma \backslash G/K$ come up in many places in mathematics, and include, for example, all hyperbolic manifolds, several moduli spaces, and the space $\SL(2, \Z) \backslash \SL(2, \R)/\SO(2)$ on which modular forms are defined. Spaces $X$ that  arise in this way are sometimes compact, but in many relevant cases (such as the example mentioned) they only have finite volume. Because of the relevance of these spaces, it is a worthwhile question how to construct and compute topological invariants for them.

Index theory is a useful tool for constructing and computing invariants of manifolds. This works particularly well if $\Gamma$ is torsion-free and $\Gamma \backslash G$ is compact, so that $X$ is a compact, smooth manifold. Then elliptic differential operators on $X$ have well-defined Atiyah--Singer indices. And by considering the action by $\Gamma$ on $G/K$ rather than just its quotient, one can construct a higher index of $\Gamma$-equivariant elliptic operators \cite{Connes94, Kasparov83, WY20}, with values in $K_*(C^*(\Gamma))$.  Pairings of this index with various cyclic cocycles were computed by Connes and Moscovici \cite{Connes82, CM90}.

If $X$ is smooth and has finite volume, but is not necessarily compact,  then Moscovici \cite{Moscovici82} constructed an index of elliptic operators on $X$. This was computed for Dirac operators by Barbasch and Moscovici \cite{BM83} in the case where the real rank of $G$ is $1$. (For groups of higher real rank, index theorems for the signature operator were obtained in \cite{Muller84, Stern89, Stern90}.) Barbasch and Moscovici's computation was based on a version of Selberg's trace formula for real rank  $1$ groups due to Osborne and Warner \cite{OW78, Warner79}. The result is an expression for the index of Dirac operators as a sum of various contributions, all but finitely many  of which are given by semisimple orbital integrals. In fact, if $X$ is compact, then one only needs these semisimple contributions, so the non-semisimple contributions reflect the  noncompactness of $X$. Some of these non-semisimple contributions are defined by conjugation-invariant functionals, which can be expected to define traces on suitable convolution algebras. The sum of the other contributions is conjugation-invariant as well, even though the individual terms are not.

Our goal here is to unify  these constructions and computations, and obtain a higher index theorem on finite-volume locally symmetric spaces if $G$ has real rank $1$. More precisely, we aim to construct a $K$-theoretic index from which
\begin{itemize}
\item the classical index  can be recovered, 
 \item  if $G$ has real rank $1$, then the conjugation-invariant parts of  Barbasch and Moscovici's expression for the classical index on $X$ can be extracted and computed individually, and
 \item the usual $\Gamma$-equivariant index  can be recovered if $X$ is compact.
 \end{itemize}
To achieve this aim, we show that
\begin{enumerate}
\item every $G$-equivariant, self-adjoint, elliptic, first-order differential operator on $G/K$ has a well-defined index in $K_*(\cA(G))$, where $\cA(G)$ is an analogue of Casselman's Schwartz algebra \cite{Casselman89};
\item this index can be represented by a standard idempotent involving heat operators \cite{CM90};
\item the semisimple orbital integrals and the conjugation-invariant  non-semisimple components of Barbasch and Moscovici's index theorem define continuous traces on $\cA(G)$. 
\end{enumerate}
There are several commonly used convolution algebras on $G$ besides Casselmann's Schwartz algebra on which semisimple orbital integrals define continuous traces. These include  $C^{\infty}_c(G)$ and Harish-Chandra's $L^1$- or $L^2$-Schwartz algebras. We use Casselman's algebra because it is large enough so that  the first two points above are true (the second is not true for $C^{\infty}_c(G)$), and small enough so that the third point is true (which is not true for some non-semisimple contributions if one uses Harish-Chandra's Schwartz algebras). 
See also Remark \ref{rem why SG}.

Because of the second point above, applying the traces from the third point to the index in the first point leads to expressions involving heat operators, which were  computed  in  \cite{BM83}. 
For traces not defined by semisimple orbital integrals, we then simply apply the computations in \cite{BM83}. For semisimple orbital integrals, we give an independent index-theoretic computation. This may be of independent interest, and it also has the advantages that 
\begin{itemize}
\item
it generalises to the higher orbital integrals from \cite{ST19}, which yield nonzero results even when $\rank(G)>\rank(K)$, in which case the classical index 
vanishes;
\item our computations also apply when $\Gamma$ has torsion, so that $X$ is an orbifold, and  we obtain nonzero contributions from nontrivial elliptic elements;
\item the index theorems for semisimple orbital integrals apply even when $G$ does not have real rank $1$. 
\end{itemize}
(We note, however, that the computations for semisimple orbital integrals are more about the action by $G$ on $G/K$ than about the group $\Gamma$.)

Our higher index on $G/K$ is constructed as a special case of a general higher index of elliptic operators equivariant with respect to proper, cocompact group actions, constructed in \cite{GHW25a}. This takes values in the $K$-theory of an algebra of exponentially fast decaying kernels. 


\subsection*{Acknowledgements}

We thank Yanli Song for helpful suggestions.

PH was supported by the Netherlands Organisation for Scientific Research NWO, through ENW-M grants  OCENW.M.21.176 and OCENW.M.23.063. HW was supported by the grants 23JC1401900, NSFC 12271165 and in part by Science and Technology Commission of Shanghai Municipality (No.\ 22DZ2229014).

\section{A higher index}


We recall  the construction of a higher index in the $K$-theory of an algebra of very rapidly decaying kernels from \cite{GHW25a}.

\subsection{An algebra of exponentially decaying kernels}\label{sec prelim alg}

Consider a  Riemannian manifold $X$, with  Riemannian distance $d$ and Riemannian density $\mu$.
We assume  that $X$ has \emph{uniformly exponential growth}, in the sense that there are $a_0,C>0$ such that  for all $x \in X$, 
\beq{eq unif exp}
\int_X e^{-{a_0} d(x,x')}\, d\mu(x') \leq C.
\eeq

%


We consider a Hermitian vector bundle $E \to X$. A section $\kappa$ of  the exterior tensor product $E \boxtimes E^* \to X \times X$ has \emph{finite propagation} if there is an $r>0$ such that for all $x,x' \in X$ with $d(x,x')>r$, we have $\kappa(x,x')=0$. If two $L^{\infty}$-sections $\kappa, \lambda$ of $E \boxtimes E^*$ have finite propagation, then their composition $\kappa \lambda$ is well-defined as an $L^{\infty}$-section of $E \boxtimes E^*$ with finite propagation, by
\beq{eq compos kernels}
(\kappa \lambda)(x,x') = \int_X \kappa(x,x'')\lambda(x'', x')\, d\mu(x''),
\eeq
for all $x, x' \in X$.

%
%

%
%

Let $H$ be a Lie group (possibly with infinitely many connected components, so $H$ may be discrete), acting isometrically on $X$ and such that $X/H$ is compact. Suppose that $E$ is an $H$-equivariant vector bundle, and that the action preserves the Hermitian metric on $E$. 
Consider the  action by $H \times H$ on $\Gamma^{\infty}(E \boxtimes E^*)$ given by
\beq{eq HxH action}
((h,h') \cdot \kappa)(x,x') := 
h\kappa(h^{-1}x, h'^{-1}x')h'^{-1}, 
\eeq
for $h,h' \in H$, $x,x' \in X$ and $\kappa \in \Gamma^{\infty}(E \boxtimes E^*)$. Suppose that the operators in $S$ commute with this action. 
%

Let $Y$ be another Riemannian manifold with uniformly exponential growth. Suppose that $H$ acts isometrically on $Y$. Let $\varphi\colon Y \to X$ be a surjective, $H$-equivariant smooth map.
We assume that $\varphi$ is
{quasi-isometry} in the sense that there are $A, B>0$ such that for all $y,y' \in Y$,
\beq{eq phi quasi isom}
A^{-1} d_Y(y,y') - B \leq
d_X(\varphi(y), \varphi(y')) \leq A d_Y(y,y') + B, 
\eeq
where $d_X$ is the Riemannian distance on $X$, and $d_Y$ is the one on $Y$. We also assume that
$\varphi$ is an {algebra homomorphism} in the sense that for all $\kappa, \lambda \in \Gamma_{\fp}^{\infty}(E \boxtimes E^*)$, 
\beq{eq phi homom}
\varphi^*(\kappa \lambda) =  (\varphi^* \kappa) (\varphi^*\lambda).
\eeq
In \eqref{eq phi homom}, we note that \eqref{eq phi quasi isom} implies that $ (\varphi^* \kappa)$ and $(\varphi^*\lambda)$ have finite propagation, so their composition is defined.

 Fix an algebra $S$ of differential operators on $\varphi^*E \boxtimes \varphi^*E^*$ such that
\begin{enumerate}
\item $S$ contains
the identity operator;
\item
for all $l \in \Z_{\geq 0}$, the set of order $l$ differential operators in $S$ is a finite-dimensional linear subspace of $S$;
\item for all $P \in S$, there are differential operators $Q$ and $R$ on $\varphi^*E$ such that $Q \otimes 1$ and $1\otimes R^*$ lie in  $S$ and $P = Q \otimes R^*$. Here $R^*$ is  the differential operator on $\varphi^*E^*$  dual to $R$.
%
 \end{enumerate}
Let $\Gamma_{S, \varphi, \fp}^{\infty}(E \boxtimes E^*)$ be the space of  smooth sections $\kappa$ of $E \boxtimes E^* \to X\times X$ with finite propagation, such that $P \varphi^*\kappa$ is bounded for all $P \in S$. 
\begin{definition}\label{def AX phi}
For $a>0$ and $P \in S$, the seminorm $\|\cdot \|_{a, P}$ on $\Gamma_{S, \varphi, \fp}^{\infty}(E \boxtimes E^*)$ is given by
\beq{eq seminorms ASE phi}
\|\kappa\|_{a, P}  = \sup_{y,y' \in Y} e^{ad_Y(y,y')}  \|(P \varphi^*\kappa) (y,y')\|, 
\eeq
for $\kappa \in \Gamma_{S, \fp}^{\infty}(E \boxtimes E^*)$, where $d_Y$ is the Riemannian distance on $Y$.
We denote the completion of $\Gamma_{S, \varphi, \fp}^{\infty}(E \boxtimes E^*)$  in the seminorms $\|\cdot \|_{a, P}$, for $a >0$ and $P \in S$, by $\cA_{S, \varphi}(E)$. The linear subspace of elements invariant under the action \eqref{eq HxH action} is denoted by $\cA_{S, \varphi}(E)^H$.
\end{definition}
\begin{lemma}
The space $\cA_{S, \varphi}(E)^H$ is a Fr\'echet algebra with respect to composition of kernels.
\end{lemma}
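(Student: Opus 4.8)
The plan is to verify the three requirements in the definition of a Fréchet algebra: that $\cA_{S, \varphi}(E)^H$ is a complete, metrizable topological vector space, that composition of kernels is well-defined on it, and that composition is jointly continuous with respect to the seminorms $\|\cdot\|_{a,P}$. Completeness and metrizability are essentially built into Definition \ref{def AX phi}: the space is defined as a completion in a countable family of seminorms (countable because it suffices to take $a \in \Z_{>0}$ and, using property (2) of $S$, a countable spanning set of differential operators, each seminorm $\|\cdot\|_{a,P}$ being dominated by the maximum of the seminorms associated to a basis of the finite-dimensional space of operators of the relevant order). Invariance under the action \eqref{eq HxH action} is a closed condition, so $\cA_{S, \varphi}(E)^H$ is a closed subspace of a Fréchet space, hence itself Fréchet as a topological vector space. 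It remains to handle the algebra structure.

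The heart of the matter is the following estimate: for suitable $a, a', a'' > 0$ and $P \in S$, there is a constant $C > 0$ and finitely many seminorms such that
\beq{eq alg estimate}
\|\kappa \lambda\|_{a'', P} \leq C \sum_i \|\kappa\|_{a, P_i} \|\lambda\|_{a', Q_i}.
\eeq
To prove this I would first reduce to the case $P = Q \otimes R^*$ using property (3) of $S$; then $P \varphi^*(\kappa\lambda) = (Q \otimes 1)\varphi^*(\kappa\lambda)$ differentiated again by $1 \otimes R^*$, and using the homomorphism property \eqref{eq phi homom} together with the fact that $Q$, $R$ act in the first and second variable of $\varphi^*\kappa$, $\varphi^*\lambda$ respectively, one expresses $(P\varphi^*(\kappa\lambda))(y, y')$ as an integral over $x'' \in X$ of $(Q\varphi^*\kappa)(\cdot, x'')$ composed with $(R^*\varphi^*\lambda)(x'', \cdot)$, evaluated at points of $Y$ mapping to $y, y'$. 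Bounding the integrand by $e^{-a d_Y(y, \tilde y'')} e^{-a' d_Y(\tilde y'', y')}\|\kappa\|_{a, Q\otimes 1}\|\lambda\|_{a', 1 \otimes R^*}$, splitting the exponent using the triangle inequality in $Y$ to extract $e^{-a'' d_Y(y,y')}$ for some $a'' < \min(a, a')$, and then using \eqref{eq phi quasi isom} to pass to $d_X$ and \eqref{eq unif exp} (uniform exponential growth of $X$, with $a'' $ chosen at least $A a_0$) to bound the remaining integral over $X$ by a constant, yields \eqref{eq alg estimate}. Here the finite-propagation hypothesis guarantees the integral is over a compact set so everything converges, and equivariance of $\varphi$ and of the metrics ensures the choice of preimages in $Y$ is immaterial; one should also note $H$-invariance of $\kappa, \lambda$ is preserved under composition, which is a direct computation from \eqref{eq HxH action} and \eqref{eq compos kernels}.

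The main obstacle is the bookkeeping in the reduction step: property (3) only guarantees a factorization $P = Q \otimes R^*$ for each $P$, but to iterate the Leibniz-type argument and land back inside $S$ one must check that the operators $Q \otimes 1$ and $1 \otimes R^*$ produced are again in $S$ (which is part of property (3)), and that differentiating a composition of kernels in the $X$-variable really does distribute as claimed — this is where \eqref{eq phi homom} is essential, since a priori $\varphi^*$ of a product is only a product after pullback. Once \eqref{eq alg estimate} is in hand, joint continuity of multiplication is immediate, the product of two elements of the completion is defined by continuity and lies in the completion, associativity passes from $\Gamma_{S,\varphi,\fp}^\infty(E \boxtimes E^*)$ to $\cA_{S,\varphi}(E)^H$ by density, and the proof is complete.
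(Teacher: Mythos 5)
The paper itself gives no proof of this lemma — it simply cites Lemma 2.6 and equality (2.7) of [GHW25a] — so there is nothing to compare line by line; but your reconstruction is essentially the expected argument and is correct in outline. Completeness and metrizability from an effectively countable seminorm family (integer $a$, a basis of each finite-dimensional space of operators in $S$), closedness of the $H\times H$-fixed subspace, and the key estimate obtained by factoring $P = Q \otimes R^*$ via property (3), letting $Q$ and $R^*$ fall on the two factors of the composition (note that no Leibniz rule is actually needed here: the two operators differentiate in disjoint sets of variables, so they each hit exactly one factor under the integral sign), and splitting the exponential weight with the triangle inequality — all of this is right. The one point to correct is the locus of the composition integral: after applying \eqref{eq phi homom}, the product $(\varphi^*\kappa)(\varphi^*\lambda)$ is a composition of kernels on $Y$, i.e.\ an integral over $y'' \in Y$ against the Riemannian density of $Y$, not an integral over $x'' \in X$ "evaluated at points of $Y$ mapping to $y,y'$". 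Consequently the leftover integral $\int_Y e^{-(a-a'')d_Y(y,y'')}\,d\mu_Y(y'')$ should be bounded directly by the uniformly exponential growth of $Y$, which is a standing assumption on $Y$ in Subsection \ref{sec prelim alg}; your detour of passing to $d_X$ via the quasi-isometry \eqref{eq phi quasi isom} and invoking \eqref{eq unif exp} for $X$ would additionally require a uniform comparison of $\mu_Y$ with $\varphi^*\mu_X$ along the fibres of $\varphi$, which is not among the hypotheses. With that substitution the estimate closes, and the remaining points (continuity of each $(h,h')$-action map so that the fixed-point set is closed, preservation of invariance and of associativity under passage to the completion) are routine as you say.
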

This lemma is a combination of Lemma 2.6 and equality (2.7) in \cite{GHW25a}.

\subsection{Construction of the index}

Let $D$ be an $H$-equivariant, first-order, elliptic, self-adjoint differential operator on $E$. We make the following assumptions.
\begin{enumerate}
\item There are $Q,R \in S$ such that   $\varphi^* \circ (D^2\otimes 1) = Q \circ \varphi^*$ and $\varphi^* \circ (1 \otimes (D^2)^*) = R \circ \varphi^*$.
\item  
For all $f \in C^{\infty}_c(X)$, all repeated commutators of $f$ with $D^2$ 
are well-defined, bounded operators on $L^2(E)$. Furthermore, if $Z$ is any smooth manifold, and $\tilde f \in C^{\infty}(Z \times X)$ is such that $\tilde f(z, \relbar) \in C^{\infty}_c(X)$ for all $z \in Z$, then we assume that the operator norms of the repeated commutators of $D^2$ with $\tilde f(z, \relbar)$ 
 depend continuously on $z$.  
\item For all $P \in S$ the compositions $P \circ \varphi^* \circ (D \otimes 1)$ and $P \circ \varphi^* \circ (1 \otimes D)$ are finite sums of compositions of the form $A \circ Q \circ \varphi^*$, for 
bounded endomorphisms $A$ of $\varphi^*E \boxtimes \varphi^* E^*$ and  $Q \in S$.
\end{enumerate}


Theorem 2.13 in \cite{GHW25a} is the following result.
\begin{theorem}\label{thm def index}
The operator $D$ lies in the multiplier algebra $\cM(\cA_{S, \varphi}(E)^H)$. Its class in $\cM(\cA_{S, \varphi}(E)^H)/\cA_{S, \varphi}(E)^H$ is invertible. The image of the resulting class  $ [D] \in K_1(\cM(\cA_{S, \varphi}(E)^H)/\cA_{S, \varphi}(E)^H)$ under the boundary map
\[
\partial\colon K_1(\cM(\cA_{S, \varphi}(E)^H)/\cA_{S, \varphi}(E)^H) \to K_0(\cA_{S, \varphi}(E)^H).
\]
 equals
\begin{multline} \label{eq index idempotent}
\partial [D] = \\
\left[
\begin{pmatrix}
e^{-tD^-D^+} & e^{-\frac{t}{2}D^-D^+} \frac{1-e^{-tD^-D^+}}{D^-D^+}D^-\\
e^{-\frac{t}{2}D^+D^-}D^+ & 1-e^{-tD^+D^-} 
\end{pmatrix}\right] - 
\left[
\begin{pmatrix}
0& 0\\
0 & 1
\end{pmatrix}\right]
\quad \in K_0(\cA_{S, \varphi}(E)^H).
\end{multline}
\end{theorem}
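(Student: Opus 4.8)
The plan is to follow the standard route from an elliptic operator to an index via a connecting map: first show that $D$ is a multiplier of the kernel algebra, then that a heat-kernel parametrix makes $D$ invertible modulo the ideal, and finally evaluate the connecting map on the resulting $K_1$-class, obtaining the Connes--Moscovici heat idempotent \cite{CM90}; the three assumptions on $D$ are exactly what is needed to run this in the Fr\'echet setting of Definition \ref{def AX phi}. For the multiplier property I would take $\kappa\in\cA_{S,\varphi}(E)^H$ and note that left and right convolution by $D$ preserve finite propagation (as $D$ is local) and $H$-invariance (as $D$ is $H$-equivariant), while finiteness and continuity in the seminorms \eqref{eq seminorms ASE phi} follow from assumption (3): writing $P\circ\varphi^*\circ(D\otimes1)=\sum_i A_i\circ Q_i\circ\varphi^*$ with the $A_i$ bounded and $Q_i\in S$ gives $\|D\kappa\|_{a,P}\le\sum_i\|A_i\|\,\|\kappa\|_{a,Q_i}$, and similarly for the second factor and for $\kappa D$, with assumption (2) ensuring the operator compositions are bounded on $L^2(E)$. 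Thus $D$, and every polynomial in $D$, multiplies $\cA_{S,\varphi}(E)^H$, a closed ideal in $\cM(\cA_{S,\varphi}(E)^H)$.

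Fix $t>0$. The crucial thing to establish is that $e^{-sD^2}\in\cA_{S,\varphi}(E)^H$ for every $s>0$. This operator is $H$-invariant since $D^2$ is $H$-equivariant; by finite propagation speed of $\cos(uD)$ and the identity $e^{-sD^2}=(4\pi s)^{-1/2}\int_{\R}e^{-u^2/4s}\cos(uD)\,du$, its Schwartz kernel obeys Gaussian off-diagonal bounds of Cheeger--Gromov--Taylor type, and, using assumptions (1), (3) and the structural properties of $S$, the quantities $P\varphi^*e^{-sD^2}$ for $P\in S$ can be controlled in terms of $\varphi^*$ of operators $A\cdot p(D^2)e^{-sD^2}$ with $A$ bounded and $p$ a polynomial, which again decay Gaussianly off the diagonal by heat-semigroup smoothing and elliptic estimates (assumption (2) supplying the commutator bounds and their continuity in parameters). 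Transporting this decay from $X$ to $Y$ via the quasi-isometry \eqref{eq phi quasi isom} would give $\|(P\varphi^*e^{-sD^2})(y,y')\|\le C_{a,P}\,e^{-a d_Y(y,y')}$ for all $a>0$; truncating at finite propagation radius $R$ then yields elements of $\Gamma^{\infty}_{S,\varphi,\fp}(E\boxtimes E^*)$ converging to $e^{-sD^2}$ in every seminorm as $R\to\infty$ --- Gaussian decay beats the weight, and \eqref{eq unif exp} controls the compositions. With this in hand, the bounded function $b=\tfrac{1-e^{-\frac{t}{2}D^2}}{D}=D\int_0^{t/2}e^{-rD^2}\,dr$ of the self-adjoint operator $D$ is a multiplier of $\cA_{S,\varphi}(E)^H$, $Db=bD=1-e^{-\frac{t}{2}D^2}$ by functional calculus, and since $e^{-\frac{t}{2}D^2}\in\cA_{S,\varphi}(E)^H$ the class of $D$ in $\cM(\cA_{S,\varphi}(E)^H)/\cA_{S,\varphi}(E)^H$ is invertible with inverse the class of $b$; this is the class $[D]\in K_1$.

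For the connecting map I would pass to the graded situation to which \eqref{eq index idempotent} refers, $E=E^+\oplus E^-$, $D=\begin{pmatrix}0&D^-\\D^+&0\end{pmatrix}$, $D^-=(D^+)^*$, so that $[D]$ is determined by the chiral operator $D^+$. The point is that $D^+$ has the bounded, heat-regularized parametrix
\[
Q \;=\; \frac{1-e^{-\frac{t}{2}D^-D^+}}{D^-D^+}\,D^- \;=\; D^-\,\frac{1-e^{-\frac{t}{2}D^+D^-}}{D^+D^-}
\]
(the two expressions agreeing by intertwining), with $QD^+=1-e^{-\frac{t}{2}D^-D^+}$ and $D^+Q=1-e^{-\frac{t}{2}D^+D^-}$ both in $\cA_{S,\varphi}(E)^H$ by the previous paragraph. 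The standard formula expresses $\partial[D]$ as the class of the parametrix idempotent
\[
\begin{pmatrix} (1-QD^+)^2 & (1-QD^+)(2-QD^+)Q \\ (1-D^+Q)D^+ & 1-(1-D^+Q)^2 \end{pmatrix}
\]
of $D^+$; substituting $1-QD^+=e^{-\frac{t}{2}D^-D^+}$ and $1-D^+Q=e^{-\frac{t}{2}D^+D^-}$ and simplifying with the intertwining identities $D^{\pm}f(D^{\mp}D^{\pm})=f(D^{\pm}D^{\mp})D^{\pm}$ and the relation $(1-a)(1+a)=1-a^2$, this becomes the right-hand matrix of \eqref{eq index idempotent}. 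A direct block-by-block computation --- squaring that matrix and collapsing with the same identities --- then confirms that it is idempotent, and that its image modulo $M_2(\cA_{S,\varphi}(E)^H)$ equals $\begin{pmatrix}0&0\\0&1\end{pmatrix}$, so the displayed difference defines a class in $K_0(\cA_{S,\varphi}(E)^H)$.

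I expect the only genuinely nontrivial step to be the second one: showing that $e^{-sD^2}$, together with all the $S$-derivatives $P\varphi^*e^{-sD^2}$, actually lies in $\cA_{S,\varphi}(E)^H$. This is where the hypotheses all come into play simultaneously --- uniform exponential growth \eqref{eq unif exp} and the quasi-isometry \eqref{eq phi quasi isom}, the structural conditions on $S$, and assumptions (1)--(3) on $D$ --- via finite-propagation-speed / Cheeger--Gromov--Taylor heat kernel estimates, elliptic regularity, and finite-propagation truncation. The multiplier property and the connecting-map computation should then be routine.
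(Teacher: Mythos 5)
The paper does not actually prove this theorem: it is quoted verbatim as Theorem 2.13 of \cite{GHW25a}, so there is no in-paper argument to compare yours against line by line. That said, your strategy is the standard one and is almost certainly the route taken in \cite{GHW25a}. The purely algebraic/K-theoretic part of your proposal is correct and complete: with $S_0=1-QD^+=e^{-\frac{t}{2}D^-D^+}$ and $S_1=1-D^+Q=e^{-\frac{t}{2}D^+D^-}$, the standard parametrix idempotent $\bigl(\begin{smallmatrix} S_0^2 & S_0(1+S_0)Q\\ S_1D^+ & 1-S_1^2\end{smallmatrix}\bigr)$ does collapse, via $D^{\pm}f(D^{\mp}D^{\pm})=f(D^{\pm}D^{\mp})D^{\pm}$ and $(1-a)(1+a)=1-a^2$, to exactly the matrix in \eqref{eq index idempotent}, and your use of assumption (3) for the multiplier property is the intended one.

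You correctly identify that the entire weight of the theorem sits in the claim that $e^{-sD^2}$ (and, with it, $Q$ as a multiplier and hence the off-diagonal entries) lies in $\cA_{S,\varphi}(E)^H$, but this step remains a sketch rather than a proof. Two points deserve flagging. First, the seminorms \eqref{eq seminorms ASE phi} demand decay faster than $e^{-a d_Y(y,y')}$ for \emph{every} $a>0$ simultaneously with control of \emph{every} $P\in S$; Cheeger--Gromov--Taylor gives Gaussian off-diagonal decay of the kernel itself, but upgrading this to all $S$-derivatives requires uniform elliptic estimates on balls (bounded geometry from cocompactness of the $H$-action) combined with assumptions (1)--(3), and the paper's own Remark \ref{rem why SG} indicates this is genuinely involved --- it is precisely the reason $\calS(G)$ is used rather than Harish-Chandra's Schwartz algebras. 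Second, your argument that $b=D\int_0^{t/2}e^{-rD^2}\,dr$ is a multiplier needs the map $r\mapsto De^{-rD^2}\kappa$ to be integrable near $r=0$ with values in the Fr\'echet algebra, which is plausible but not automatic and should be justified (e.g.\ by writing $b\kappa=\int_0^{t/2}De^{-rD^2}\kappa\,dr$ and checking continuity of the integrand at $r=0$ in each seminorm). With those caveats, the proposal is a faithful outline of the proof delegated to \cite{GHW25a}, not a complete substitute for it.
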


\begin{definition}\label{def H index}
The $H$-equivariant \emph{higher index} of $D$ with respect to $S$ and $\varphi$ is the class \eqref{eq index idempotent}. It is denoted by $\ind_{S,H}(D) \in K_0(\cA_{S, \varphi}(E)^H)$.
\end{definition}


\subsection{Special case: Riemannian symmetric spaces}\label{sec GK ex}


In this paper, we restrict ourselves to the motivating case of  Examples 2.3, 2.10 and 2.11 in \cite{GHW25a}. (See also Lemma \ref{lem AEG}.) We summarize these examples here, and refer to Examples 2.3, 2.10 and 2.11 in \cite{GHW25a} for the (short and straightforward) proofs of the claims made. 

Let $G$ be a noncompact,  connected, real semisimple Lie group, and let $K<G$ be a maximal compact subgroup.  Let $\kg = \kk \oplus \kp$ be a Cartan decomposition, and consider the 
$G$-invariant metric on $X=G/K$ defined by the $\Ad(K)$-invariant inner product on $\kg$ defined by the Killing form and the Cartan involution.

Let $V$ be a finite-dimensional unitary representation of $K$.
Consider the trivial vector bundle $G \times V \to G$.    Let $E = G \times_K V \to X$ be  the quotient of $G \times V$ by the $K$-action given by
\[
k\cdot (g,v) = (gk^{-1}, k \cdot v),
\]
for $k \in K$, $g \in G$ and $v \in V$.  Let $H<G$ be a closed subgroup such that $H \backslash G$ is compact, acting on the given spaces and vector bundles via left multiplication.
Let $q \colon G \to X$ be the quotient map. Then $q^*E = G \times V$, and $q$ satisfies all the assumptions on the map $\varphi$ made in Subsection \ref{sec prelim alg}. 

Let
\[
S:= \{R(P); P \in \cU(\kg \times \kg)\},
\]
where $R$ is the right regular representation of  $ \cU(\kg \times \kg)$ in $C^{\infty}(G \times G)$.
Then $S$ satisfies the three conditions listed above Definition \ref{def AX phi}. 

The adjoint representation $\Ad\colon K \to \SO(\kp)$ has a lift
\beq{eq tilde Ad}
\widetilde{\Ad}\colon 
\widetilde{K} \to \Spin(\kp)
\eeq
 to  double covers. 
In this way, the standard representation $\Delta_{\kp}$ of $\Spin(\kp)$ may be viewed as a representation of $\widetilde{K}$. 
 Let $W$ be a finite-dimensional, unitary representation of $\widetilde{K}$ such that $V = \Delta_{\kp} \otimes W$ descends to a representation of $K$.
 
 Let $\{X_1, \ldots, X_r\}$ be an orthonormal basis of $\kp$. Let $c\colon \kp \to \End(\Delta_{\kp})$ be the Clifford action. Then we have the Dirac operator
 \beq{eq DW}
 D_W := \sum_{j=1}^r R(X_j) \otimes c(X_j) \otimes 1_W
 \eeq
 on $\bigl( C^{\infty}(G) \otimes \Delta_{\kp} \otimes W\bigr)^K \cong \Gamma^{\infty}(G \times_K V)$. This operator satisfies the assumptions on $D$ listed above Theorem \ref{thm def index}. (This operator is used in many places, e.g. \cite{Atiyah77, BM83, Parthasarathy72, Wassermann87}.)

Suppose that $G/K$ is even-dimensional. Then $\Delta_{\kp}$, and hence $E$, has  a natural $\Z/2\Z$-grading, with respect to which $D_W$ is odd. 
By Theorem \ref{thm def index}, we obtain
\beq{eq ind H DW}
\ind_{S, H}(D_W) \in K_0(\cA_{S, q}(E)^H).
\eeq
In this paper, we study this index in the case where $H=G$.

\section{Index theorems}\label{sec index thms}

We continue using the notation and assumptions from Subsection \ref{sec GK ex}.
(Later, $G$ will sometimes be assumed to have real rank $1$.)
Fix a Haar measure $dg$ on $G$. 
Let  $dk$ be the Haar measure on $K$ giving it unit volume. The Riemannian distance on $X$ is denoted by $d$.  Let $d_{G}$ be the Riemannian distance on $G$ defined by the given inner product on $\kg$.

\subsection{Geometric traces}\label{sec prelim traces}

We will use some `geometric' traces on an algebra of matrix-valued functions on  $G$, defined by integrals over subsets of $G$. We also use some `spectral' traces, involving the operator trace, introduced in Subsection \ref{sec spec tr}. All these traces correspond to terms in Barbasch and Moscovici's index theorem \cite{BM83}.

We write $L$ and $R$ for the left and right regular representations of $G$ on smooth functions, respectively. We use the same notation for the corresponding representation  $\cU(\kg)$.
\begin{definition}
For $f \in C^{\infty}_c(G)$, $a>0$ and $P_1, P_2 \in \cU(\kg)$, we write 
\beq{eq norms SG}
\|f\|_{a,P_1, P_2} := \sup_{g \in G} e^{ad_G(e,g)} |(L(P_1)R(P_2)f)(g)|.
\eeq
 Let $\calS(G)$ be the completion of $C^{\infty}_c(G)$ in the seminorms $\| \cdot \|_{a,P_1, P_2}$.
 \end{definition}
\begin{remark}
The algebra $\calS(G)$ is a version of Casselman's Schwartz algebra \cite{Casselman89}. We use seminorms defined by left and right derivatives, whereas only left derivatives are used in  \cite{Casselman89}. We use both left and right derivatives so that $\calS(G)$ embeds into Harish-Chandra's Schwartz spaces, see Lemma \ref{lem incl SC}.  It is not essential to use both left and right derivatives, because in the end we will always restrict to subalgebras of $\calS(G)$ corresponding to a finite number of $K$-types. Then it is sufficient to only use left or right derivatives to obtain an embedding into Harish-Chandra's  Schwartz spaces, by (a slight extension of) Lemma 18 in \cite{HCDSII}. 
%
\end{remark}

\begin{lemma}\label{lem alg A infty}
 The space $\calS(G)$ is a Fr\'echet algebra with respect to convolution. 
 \end{lemma}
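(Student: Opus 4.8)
The plan is to show that convolution extends continuously to $\calS(G)$, using the fact that the seminorms $\|\cdot\|_{a,P_1,P_2}$ are submultiplicative up to a shift in the decay rate $a$ and a constant absorbed by the uniform exponential growth estimate \eqref{eq unif exp} (which holds for $X = G/K$, and hence, after averaging over $K$, for $G$ itself with its bi-invariant-on-$K$ Riemannian distance $d_G$). First I would record that for $f_1, f_2 \in C^{\infty}_c(G)$ one has $L(P_1)R(P_2)(f_1 * f_2) = (L(P_1)f_1) * (R(P_2)f_2)$, since left derivatives fall on the first factor and right derivatives on the second; this reduces the estimate to the case $P_1 = P_2 = 1$. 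Then, using the triangle inequality $d_G(e,g) \leq d_G(e,h) + d_G(h,g) = d_G(e,h) + d_G(e, h^{-1}g)$ together with the bi-invariance of $d_G$ under $K$ (and noting $d_G(e,\cdot)$ is symmetric), I would bound
\[
e^{a d_G(e,g)} |(f_1 * f_2)(g)| \leq \int_G e^{a d_G(e,h)} |f_1(h)|\, e^{a d_G(e, h^{-1}g)} |f_2(h^{-1}g)|\, dh.
\]

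Next I would split the decay: choose $a' > a$ and write the integrand as $\bigl(e^{a' d_G(e,h)}|f_1(h)|\bigr) \bigl(e^{a' d_G(e,h^{-1}g)}|f_2(h^{-1}g)|\bigr) e^{-(a'-a)(d_G(e,h)+d_G(e,h^{-1}g))}$. The first two factors are bounded by $\|f_1\|_{a',1,1}$ and $\|f_2\|_{a',1,1}$ respectively, and what remains is $\|f_1\|_{a',1,1}\|f_2\|_{a',1,1} \int_G e^{-(a'-a) d_G(e,h)} e^{-(a'-a)d_G(e,h^{-1}g)}\, dh$, which is dominated by $e^{-(a'-a)d_G(e,g)/2}$ times $\int_G e^{-\frac{a'-a}{2} d_G(e,h)}\, dh$ using $d_G(e,h) + d_G(e,h^{-1}g) \geq \max(\cdot) \geq \tfrac12(d_G(e,h) + d_G(e,g))$ on part of the domain and symmetrically on the other. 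The remaining integral is finite once $(a'-a)/2 \geq a_0$ by the uniform exponential growth property of $G$. This yields $\|f_1 * f_2\|_{a,1,1} \leq C_{a,a'} \|f_1\|_{a',1,1} \|f_2\|_{a',1,1}$, and combined with the first reduction, $\|f_1 * f_2\|_{a,P_1,P_2} \leq C_{a,a'} \|f_1\|_{a',P_1,1}\|f_2\|_{a',1,P_2}$. Continuity of convolution, associativity (which passes to the completion by density), and hence the Fr\'echet algebra structure follow.

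The main obstacle I anticipate is not any single estimate but making the exponential-growth input precise: one must check that $G$ itself, with the distance $d_G$ coming from the inner product on $\kg$, satisfies a bound of the form $\int_G e^{-a_0 d_G(e,g)}\, dg < \infty$ for $a_0$ large enough. This follows because $G \to G/K$ is a Riemannian submersion with compact fibres, so $d_G(e,g)$ differs from $d(eK, gK)$ by at most the diameter of $K$, and the volume growth of $G/K$ is at most exponential (indeed, in the symmetric-space metric, $\vol(B_R) \leq C e^{cR}$ for constants depending on the root system). Alternatively, and more in keeping with the paper's setup, one can simply cite that this is exactly the content of Examples 2.3 and the surrounding discussion in \cite{GHW25a}, or invoke the standard fact that Casselman's Schwartz algebra is a Fr\'echet algebra \cite{Casselman89}; the lemma is essentially a repackaging of that fact for the slightly modified (two-sided) seminorms, and the bi-invariance of $d_G$ under $K$ together with $L$--$R$ commutation is what makes the two-sided version go through with no extra work.
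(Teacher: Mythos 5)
Your proposal is correct and follows essentially the same route as the paper: reduce to trivial $P_1,P_2$ via $L(P_1)R(P_2)(f_1*f_2)=(L(P_1)f_1)*(R(P_2)f_2)$, then bound the resulting convolution using the triangle inequality and the uniform exponential growth estimate \eqref{eq unif exp} applied to $G$ itself. The paper simply outsources the second step to Lemma~2.5 of \cite{GHW25a} (obtaining $\|f_1*f_2\|_{a,P_1,P_2}\leq C\|f_1\|_{2a,P_1,1}\|f_2\|_{a,1,P_2}$), which is the estimate you derive by hand.
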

 This lemma is proved in Subsection \ref{sec Schwartz spaces}.
 
 A relation between the algebra $\calS(G)$ and the algebra from Definition \ref{def AX phi} will play a central role.  As in Subsection \ref{sec GK ex},
 we have the Fr\'echet algebra $\cA_{S, q}(E)^G$.
%
The Fr\'echet algebra $\calS(G) \otimes \End(V)$ is the completion of $C^{\infty}_c(G) \otimes \End(V)$ in the seminorms given by
\[
\|f\|_{a,P_1, P_2}^V := \sup_{g \in G} e^{ad_G(e, g)} \| (L(P_1)R(P_2)f)(g)\|_{\End(V)},
\]
for $a>0$, $P_1, P_2 \in \cU(\kg)$ and $f \in C^{\infty}_c(G) \otimes \End(V)$.
Consider the action by $K \times K$ on $\calS(G) \otimes \End(V)$ given by
\beq{eq action KK}
((k,k')\cdot f)(g) = \pi(k)f(k^{-1}gk')\pi(k')^{-1},
\eeq
for $k,k' \in K$, $f \in \calS(G) \otimes \End(V)$ and $g \in G$. Analogously to Lemma 2.6 in \cite{GHW25a}, elements of $K\times K$ act on $\calS(G) \otimes \End(V)$ by continuous operators, so that the fixed point set $(\calS(G) \otimes \End(V))^{K \times K}$ is a Fr\'echet algebra. 

Consider the fixed-point set $(C^{\infty}(G) \otimes \End(V))^{K \times K}$ of the action \eqref{eq action KK} by $K \times K$  on $C^{\infty}(G) \otimes \End(V)$.
\begin{lemma}\label{lem AEG}
For all $\kappa \in \Gamma^{\infty}(E \boxtimes E^*)^G$, setting 
\beq{eq def Psi}
[e, \Psi(\kappa)(g)v] = \kappa(eK,gK)[g,v],
\eeq
for $g \in G$ and $v \in V$, defines a function $\Psi(\kappa) \in (C^{\infty}(G) \otimes \End(V))^{K \times K}$.
This defines a linear isomorphism $\Psi\colon \Gamma^{\infty}(E \boxtimes E^*)^G \to (C^{\infty}(G) \otimes \End(V))^{K \times K}$.  This linear isomorphism restricts to an algebra isomorphism from 
$\Gamma^{\infty}_{\fp}(E \boxtimes E^*)^G$ to $(C^{\infty}_c(G) \otimes \End(V))^{K \times K}$, 
which in turn extends continuously to an isometric algebra isomorphism 
 from $\cA_{S, q}(E)^G$ to $(\calS(G) \otimes \End(V))^{K \times K}$.
\end{lemma}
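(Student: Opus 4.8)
The plan is to build the map $\Psi$ and its inverse by hand at the level of sections, check that $\Psi$ intertwines composition of kernels with convolution, and then recognize that the seminorms defining $\cA_{S,q}(E)^G$ and $(\calS(G)\otimes\End(V))^{K\times K}$ match up under $\Psi$. First I would verify that \eqref{eq def Psi} is well-defined: the left-hand side must be independent of the chosen representative $[g,v]$, which amounts to checking that replacing $g$ by $gk$ and $v$ by $k^{-1}v$ gives the same vector, and this is exactly the $G$-invariance of $\kappa$ together with the equivariance of the bundle projection. One should also record that $\Psi(\kappa)(g)\in\End(V)$ is genuinely linear in $v$ (clear) and that $\Psi(\kappa)$ lies in the $K\times K$-fixed subspace: the $(k,k')$-equivariance \eqref{eq action KK} translates directly into the identity $\kappa(eK,gK)=\kappa(eK,gK)$ obtained by moving the $K$-actions through the brackets $[e,-]$ and $[g,-]$, using again $G$-invariance of $\kappa$ to shift $e$ to $k$. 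Conversely, given $f\in(C^{\infty}(G)\otimes\End(V))^{K\times K}$ one reconstructs a $G$-invariant kernel by the formula $\kappa(g_1K,g_2K)[g_2,v]:=[g_1,f(g_1^{-1}g_2)v]$; the $K\times K$-invariance of $f$ is precisely what makes this independent of the representatives $g_1,g_2$, and $G$-invariance of the resulting $\kappa$ is built in. These two constructions are visibly mutually inverse, giving the linear isomorphism $\Psi\colon\Gamma^{\infty}(E\boxtimes E^*)^G\to(C^{\infty}(G)\otimes\End(V))^{K\times K}$.

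Next I would check the algebra structure. A kernel $\kappa$ has finite propagation iff $\Psi(\kappa)$ is supported in a set of the form $\{g:d_G(e,g)\le r\}$, because $d(eK,gK)$ and $d_G(e,g)$ are comparable (indeed $d(eK,gK)=\inf_{k}d_G(e,gk)$, and $q$ is the quasi-isometry $\varphi$ of Subsection~\ref{sec prelim alg}); hence $\Psi$ carries $\Gamma^{\infty}_{\fp}(E\boxtimes E^*)^G$ onto $(C^{\infty}_c(G)\otimes\End(V))^{K\times K}$. For multiplicativity, I would insert \eqref{eq def Psi} into the kernel composition \eqref{eq compos kernels}, write the integral over $X=G/K$ as an integral over $G$ against $dg$ (normalising so that $\int_G\psi = \int_{G/K}\int_K\psi(gk)\,dk\,d(gK)$, with $dk$ of unit mass), change variables, and read off $\Psi(\kappa\lambda)=\Psi(\kappa)*\Psi(\lambda)$; the only subtlety is bookkeeping of the $\End(V)$-factors through the brackets, which is routine. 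This is also where one checks compatibility of Haar measure normalisations so that the isomorphism is genuinely isometric rather than merely topological.

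Finally, the extension to the completed algebras. Here the key point is that the operator algebra $S=\{R(P):P\in\cU(\kg\times\kg)\}$ pulls back, under $q^*$, exactly to right-invariant differential operators on $G\times G$, so that for $\kappa\in\Gamma^{\infty}_{S,q,\fp}(E\boxtimes E^*)^G$ one has $P\,q^*\kappa$ corresponding under $\Psi$ to $L(P_1)R(P_2)\Psi(\kappa)$ for suitable $P_1,P_2\in\cU(\kg)$ — using condition~(3) above Definition~\ref{def AX phi} to split $P=Q\otimes R^*$, and using $G$-invariance of $\kappa$ to trade a right-derivative in the first variable of $G\times G$ for a left-derivative in the single variable $g=g_1^{-1}g_2$. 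Combined with the comparability $d_Y=d_G$ on $Y=G$ and $d_X(q(g_1),q(g_2))\asymp d_G(g_1^{-1}g_2,e)$, this shows that the seminorm $\|\kappa\|_{a,P}$ of Definition~\ref{def AX phi} equals (up to the $\End(V)$-norm identification) the seminorm $\|\Psi(\kappa)\|^V_{a,P_1,P_2}$ — or at least that the two families of seminorms are cofinal, which gives a topological isomorphism of the completions, and with the measure normalisations fixed above, an isometric one. I expect the main obstacle to be precisely this last bookkeeping: matching the single-variable left/right seminorms on $\calS(G)$ with the two-variable seminorms on kernels over $G\times G$, keeping track of how $G$-invariance collapses the two variables to one and how derivatives transform under that collapse, and confirming that the comparison of distances $d_G$ on $G$ versus $d$ on $G/K$ does not destroy the exponential-weight estimates. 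Everything else is a direct unwinding of definitions.
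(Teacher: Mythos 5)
Your proposal is correct and follows essentially the same route as the paper's proof: it constructs the explicit inverse $\kappa_f(g_1K,g_2K)[g_2,v]=[g_1,f(g_1^{-1}g_2)v]$, matches finite propagation with compact support, verifies multiplicativity from the kernel-composition formula, and identifies the seminorms via the identity $(R(P_1)\otimes R(P_2))q^*\kappa_f(g,g')=(L(P_1)R(P_2)f)(g^{-1}g')$ together with left-invariance of $d_G$. The only cosmetic slip is attributing the isometry to the Haar-measure normalisation — the seminorms are sup-norms, so the equality $\|\kappa_f\|_{a,R(P_1)\otimes R(P_2)}=\|f\|_{a,P_1,P_2}$ is exact regardless; the normalisation $\int_G=\int_{G/K}\int_K$ with $\vol(K)=1$ is only needed for the homomorphism property.
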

This lemma is proved  in Subsection \ref{sec Schwartz spaces}.

We will use several continuous traces on the algebra $(\calS(G) \otimes \End(V))^{K \times K}$.
If $\tau$ is a linear functional on $\calS(G)$, or on the subspace of $\calS(G)$ corresponding to the $K\times K$-isotypical components in $\End(V)$, then we write
\[
\tau^V := \tau \otimes \tr_V \colon (\calS(G) \otimes \End(V))^{K \times K} \to \C.
\]

If $H$ is any group, $H'<H$ a subgroup, and $h \in H$, then we write $H'_h$ for the centraliser of $h$ in $H'$.

Let $g \in G$ be such that $G_g$ is unimodular. Fix a Haar measure $dh$ on $G_g$. Because $G_g$ is unimodular, there is a unique $G$-invariant measure $d(G_g h)$ on $G_g \backslash G$ such that for all $f \in C_c(G)$,
\[
\int_G f(h)\, dh = \int_{G_g \backslash G} \int_{G_g} f(h'h)\, dh'\, d(G_g h).
\]
For $f \in C^{\infty}(G)$ such that the following converges, consider the {orbital integral}
\beq{eq taug}
\tau_g(f) := \int_{G_g \backslash G} f(h^{-1}gh)\, d(G_g h).
\eeq
\begin{proposition}\label{prop tr SG}
If $g$ is semisimple, then $\tau_g^V$ defines a continuous trace on $(\calS(G) \otimes \End(V))^{K \times K}$.
\end{proposition}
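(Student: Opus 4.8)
The plan is to prove the two required properties—continuity of $\tau_g^V$ in the Fréchet topology of $(\calS(G) \otimes \End(V))^{K \times K}$, and the trace property $\tau_g^V(f_1 \ast f_2) = \tau_g^V(f_2 \ast f_1)$—using the semisimplicity of $g$ and the resulting structure of $G_g$. First I would observe that since $g$ is semisimple, $G_g$ is reductive (hence unimodular), so the orbital integral $\tau_g$ is well-defined as written in \eqref{eq taug}, and that $\tau_g^V = \tau_g \otimes \tr_V$, so it suffices to treat $\tau_g$ as a functional on the appropriate subspace of $\calS(G)$; the tensor factor $\tr_V$ is a fixed finite-dimensional trace and contributes nothing delicate.

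For continuity, the key point is the classical estimate, going back to Harish-Chandra and made precise in this setting by work on Casselman's Schwartz algebra, that for $f$ in $\calS(G)$ the integrand $h \mapsto f(h^{-1}gh)$ decays exponentially on $G_g \backslash G$. Concretely, one uses that the map $G_g \backslash G \to G$, $h \mapsto h^{-1}gh$, is proper and that $d_G(e, h^{-1}gh)$ grows at least linearly in the distance from the base point in $G_g \backslash G$ (this is a standard consequence of $g$ being semisimple — the distance to the fixed-point set grows); combined with the bound $|f(x)| \le \|f\|_{a,1,1}\, e^{-a d_G(e,x)}$ coming from one of the seminorms defining $\calS(G)$, and using the uniform-exponential-growth / finite-volume type estimate for $G_g\backslash G$, one gets that $|\tau_g(f)| \le C \|f\|_{a_0, 1, 1}$ for a suitable $a_0 > 0$ and constant $C$ depending only on $g$ and the measure normalizations. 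This gives continuity. I would cite or adapt the relevant convergence estimate from \cite{Casselman89} or Harish-Chandra's work; in fact, since Lemma \ref{lem incl SC} gives an embedding $\calS(G) \hookrightarrow$ Harish-Chandra's Schwartz space, and semisimple orbital integrals are known to be continuous there, continuity of $\tau_g$ on $\calS(G)$ follows by restriction — this is probably the cleanest route and I would use it.

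For the trace property, the standard argument is to start from $f_1, f_2 \in C^{\infty}_c(G) \otimes \End(V)$, where all integrals converge absolutely and Fubini applies freely, compute
\[
\tau_g(f_1 \ast f_2) = \int_{G_g \backslash G} \int_G f_1(h^{-1}gh\,y)\, f_2(y^{-1})\, dy\, d(G_g h),
\]
substitute $y \mapsto h^{-1} x h$, unfold using unimodularity of both $G$ and $G_g$, and reorganize the double integral using invariance of $d(G_g h)$ to recognize the result as $\tau_g(f_2 \ast f_1)$; the semisimplicity of $g$ is what makes the formal manipulation legitimate because it guarantees the measure-theoretic factorization $\int_G = \int_{G_g\backslash G}\int_{G_g}$ and unimodularity of $G_g$. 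One then extends from $C^{\infty}_c(G) \otimes \End(V)$ to $(\calS(G) \otimes \End(V))^{K\times K}$ by density (which holds by definition of $\calS(G)$) together with the continuity of $\tau_g^V$ just established and the continuity of convolution on $\calS(G)$ from Lemma \ref{lem alg A infty}. Restricting to the $K \times K$-invariants is harmless: it is a closed subalgebra and $\tau_g^V$ restricts to it.

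The main obstacle I anticipate is the continuity estimate — specifically, verifying that the exponential decay built into the seminorms of $\calS(G)$ is strong enough to dominate the growth of the orbit volume in $G_g\backslash G$, i.e. that one can absorb the volume growth of $G_g \backslash G$ into the exponential factor $e^{-a\, d_G(e, h^{-1}gh)}$ for $a$ large but finite. This is exactly the kind of estimate that motivates using Casselman's algebra rather than $C^\infty_c(G)$, and rather than going through it by hand I would lean on the embedding into Harish-Chandra's Schwartz space (Lemma \ref{lem incl SC}) and the known continuity of semisimple orbital integrals there; the price is that one inherits whatever hypotheses are needed for that classical result, but for semisimple $g$ these are satisfied. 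Everything else is routine unfolding and a density argument.
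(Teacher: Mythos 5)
Your proposal is correct and follows essentially the same route as the paper: continuity is obtained by restricting the known continuity of semisimple orbital integrals on Harish-Chandra's Schwartz space through the embedding of Lemma \ref{lem incl SC}, and the trace property is obtained from conjugation-invariance plus a Fubini/unimodularity manipulation on $C^\infty_c(G)$ followed by a density-and-continuity extension. The paper merely packages the latter step as a standalone lemma (Lemma \ref{lem conj trace}) rather than carrying out the substitution inline.
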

This proposition is proved in Subsection \ref{sec traces}.

In Theorem \ref{thm index ss}, we will see that semisimple orbital integrals (the traces from Proposition \ref{prop tr SG}) map the higher index of the operators we consider to zero if $\rank(G)> \rank(K)$. The higher index itself is generally nonzero in that case, however, and this nonvanishing can be detected by pairing the index with cyclic cocycles defined by the \emph{higher orbital integrals} from \cite{ST19}.


Let $P<G$ be a cuspidal parabolic subgroup, with Langlands decomposition $P = MA'N'$. (We use primes to distinguish the groups $A'$ and $N'$ from the groups $A$ and $N$ in an Iwasawa decomposition $G = KAN$ below.) Let $g \in M$ be semisimple. In Definition 3.3  and Theorem 3.5 in \cite{ST19}, Song and Tang constructed a cyclic cocycle
$
\Phi_{P, g} 
$
over Harish-Chandra's Schwartz algebra $\cC^2(G)$ (see page \pageref{page Cp}), whose degree is the dimension of $A'$. If $\rank(G) = \rank(K)$, then $G$ is a cuspidal parabolic subgroup of itself, and $\Phi_{G, g} = \tau_g$.

\begin{proposition} \label{prop cocycle SG}
The operator trace on $\End(V)$ and the cyclic cocycle $\Phi_{P, g}$ combine into a cyclic cocycle $\Phi_{P, g}^V$ on $(\calS(G) \otimes \End(V))^{K \times K}$.
\end{proposition}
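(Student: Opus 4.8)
The plan is to realise $\Phi_{P,g}^V$ as the cup product $\Phi_{P,g} \# \tr_V$ of $\Phi_{P,g}$ with the canonical trace on the matrix algebra $\End(V)$, and then to restrict the result to the fixed-point subalgebra $(\calS(G) \otimes \End(V))^{K \times K}$. Write $n = \dim A'$, so that by Theorem 3.5 in \cite{ST19} the functional $\Phi_{P,g}$ is a continuous cyclic $n$-cocycle on $\cC^2(G)$. The first step is to observe that, by Lemma \ref{lem incl SC}, the identity on $C^{\infty}_c(G)$ extends to a continuous algebra homomorphism $\calS(G) \to \cC^2(G)$; tensoring with the finite-dimensional algebra $\End(V)$ (so that $\calS(G) \otimes \End(V)$ and $\cC^2(G) \otimes \End(V)$ carry the finite direct-sum topologies of copies of $\calS(G)$ and $\cC^2(G)$) and restricting to $K \times K$-fixed points yields a continuous algebra homomorphism
\[
(\calS(G) \otimes \End(V))^{K \times K} \hookrightarrow \cC^2(G) \otimes \End(V).
\]
Since the pullback of a continuous cyclic cocycle along a continuous algebra homomorphism is again a continuous cyclic cocycle, it then suffices to exhibit a continuous cyclic $n$-cocycle on $\cC^2(G) \otimes \End(V)$ and pull it back.

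For the second step I would define $\Phi_{P,g}^V$ on elementary tensors by
\[
\Phi_{P,g}^V(f_0 \otimes T_0, \ldots, f_n \otimes T_n) := \Phi_{P,g}(f_0, \ldots, f_n)\, \tr_V(T_0 T_1 \cdots T_n),
\]
and extend multilinearly. Equivalently, $\Phi_{P,g}^V$ is the composition of the canonical reshuffling isomorphism $(\cC^2(G) \otimes \End(V))^{\otimes(n+1)} \cong \cC^2(G)^{\otimes(n+1)} \otimes \End(V)^{\otimes(n+1)}$ with $\Phi_{P,g}$ on the first tensor factor and with $\tr_V$ composed with iterated multiplication on the second. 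In this form it is clearly well-defined, and joint continuity is immediate from joint continuity of $\Phi_{P,g}$ on $\cC^2(G)^{n+1}$ together with continuity of multiplication and of $\tr_V$ on the finite-dimensional algebra $\End(V)$. Note that for $n=0$ this reduces to the recipe $\tau^V = \tau \otimes \tr_V$ used earlier.

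The third step is to check that $\Phi_{P,g}^V$ is a cyclic $n$-cocycle, which is precisely the computation underlying Morita invariance of cyclic cohomology. Cyclic symmetry of $\Phi_{P,g}^V$ follows from cyclic symmetry of $\Phi_{P,g}$ together with the identity $\tr_V(T_0 T_1 \cdots T_n) = \tr_V(T_n T_0 T_1 \cdots T_{n-1})$. For the Hochschild differential, every trace factor appearing in $b\Phi_{P,g}^V$ collapses to $\tr_V(T_0 T_1 \cdots T_{n+1})$ — for the terms coming from a multiplication $T_i T_{i+1}$ by associativity, and for the wrap-around term by the trace property — so that
\[
(b\Phi_{P,g}^V)(f_0 \otimes T_0, \ldots, f_{n+1} \otimes T_{n+1}) = \tr_V(T_0 T_1 \cdots T_{n+1}) \cdot (b\Phi_{P,g})(f_0, \ldots, f_{n+1}) = 0.
\]
Pulling back along the homomorphism from the first step then produces the continuous cyclic $n$-cocycle $\Phi_{P,g}^V$ on $(\calS(G) \otimes \End(V))^{K \times K}$; as a consistency check, when $\rank(G) = \rank(K)$ and $P = G$, so that $\Phi_{G,g} = \tau_g$, this recovers $\tau_g^V$ from Proposition \ref{prop tr SG}. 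I do not expect a genuine obstacle here: the substantive input is external — continuity of $\Phi_{P,g}$ on $\cC^2(G)$ from \cite{ST19}, and the continuous embedding $\calS(G) \hookrightarrow \cC^2(G)$ from Lemma \ref{lem incl SC} — and the only thing requiring care is keeping track of the Fr\'echet topologies, in particular the fact that $\Phi_{P,g}$ is continuous on $\cC^2(G)$ but not, a priori, on $\calS(G)$, which is exactly why the embedding of Lemma \ref{lem incl SC} must be invoked.
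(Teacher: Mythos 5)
Your proposal is correct and follows essentially the same route as the paper, which simply invokes the continuity of $\Phi_{P,g}$ on $\cC^2(G)$ from Theorem 3.5 of \cite{ST19} together with the continuous embedding $\calS(G) \hookrightarrow \cC^2(G)$ of Lemma \ref{lem incl SC}. You have merely written out the standard cup-product-with-$\tr_V$ verification that the paper leaves implicit.
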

This proposition is proved in Subsection \ref{sec traces}.

We will also use some traces that are not associated to semisimple elements of $G$. From now on, until the end of Subsection \ref{sec index base}, we suppose that $G$ has real rank $1$.
%
%
Consider an Iwasawa decomposition $G = KAN$, and a compatible $\Gamma$-cuspidal parabolic subgroup $P= MAN$. Then $A$ is one-dimensional by assumption. Let $\lambda \in \ka^*$ be the simple restricted root of $(\kg, \ka)$ that determines $N$. Then we have the restricted root space decomposition $\kn = \kn_{\lambda} \oplus \kn_{2\lambda}$. With $N_{\lambda}:= \exp(\kn_{\lambda})$ and $N_{2\lambda}:= \exp(\kn_{2\lambda})$, we have $N = N_{\lambda} N_{2\lambda}$, where $N_{\lambda} \cap N_{2\lambda} = \{e\}$.

 Since $N_{2\lambda}$ lies in the centre of $N$, we have $G_{n} = M_{n} N$ for all nontrivial $n \in N_{2\lambda}$, so $G_n$ is unimodular. So the orbital integral $\tau_n(f)$ can be defined on suitable functions.
\begin{lemma}\label{lem T2lam}
For all $n \in N_{2\lambda}$, the orbital integral $\tau_n^V$ defines a continuous trace on $(\calS(G) \otimes \End(V))^{K \times K}$.
\end{lemma}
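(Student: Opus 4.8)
The plan is to reduce the statement to Proposition \ref{prop tr SG} by identifying the structure of the algebra $(\calS(G) \otimes \End(V))^{K \times K}$ and of the orbital integral $\tau_n$ for $n \in N_{2\lambda}$ nontrivial, and then handling the trivial case $n = e$ separately. First I would recall that since $n \in N_{2\lambda}$ lies in the centre of $N$, we have $G_n = M_n N$, which is unimodular (as already noted in the excerpt); in particular the integral \eqref{eq taug} defining $\tau_n$ makes sense once we control convergence. So the two things to establish are: (a) convergence of $\tau_n^V(f)$ for all $f \in (\calS(G) \otimes \End(V))^{K \times K}$, together with continuity of the resulting functional in the Fréchet topology; and (b) the trace property $\tau_n^V(f_1 * f_2) = \tau_n^V(f_2 * f_1)$.

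For (a), I would parametrise $G_n \backslash G$ using the Iwasawa decomposition $G = KAN$: since $G_n \supseteq N$, and $A$ normalises $N$ (acting on $N_{2\lambda}$ by $e^{2\lambda}$), one gets a description of $G_n \backslash G$ in terms of (roughly) $M_n \backslash K$ together with $A$, and the integrand $f(h^{-1} n h)$ must be estimated via the exponential-decay seminorms $\|\cdot\|_{a, P_1, P_2}^V$. The key geometric input is a lower bound on $d_G(e, h^{-1} n h)$ as $h$ ranges over $G_n \backslash G$: conjugating $n$ by $a = \exp(t H) \in A$ rescales it by $e^{2t\lambda(H)}$, so $d_G(e, a^{-1} n a)$ grows linearly in $|t|$ in one direction, which, combined with the uniform exponential growth of $G$ (inequality \eqref{eq unif exp}), makes the orbital integral converge absolutely and depend continuously on $f$; this is entirely parallel to the semisimple case in Proposition \ref{prop tr SG}, since $n$ being conjugated in the direction transverse to its centraliser behaves like a semisimple element for the purposes of these estimates. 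I expect this convergence/decay estimate to be the main obstacle, precisely because $n$ is \emph{not} semisimple, so one cannot quote Harish-Chandra's theory of orbital integrals directly; one must instead exploit the $A$-action on $N_{2\lambda}$ to obtain the decay by hand.

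For (b), the trace property, I would argue as in the standard proof that orbital integrals over unimodular centralisers are traces: writing out $\tau_n(f_1 * f_2) = \int_{G_n\backslash G}\int_G f_1(h^{-1}nh g^{-1}) f_2(g)\, dg\, d(G_n h)$, substituting $g \mapsto h^{-1} n h g$ and using unimodularity of $G$, then reorganising the double integral over $G_n \backslash G \times G$ using the quotient integral formula for $G_n$ (here one uses that $G_n$ is unimodular, so the measure $d(G_n h)$ exists with the desired invariance) reduces $\tau_n(f_1*f_2)$ to $\tau_n(f_2 * f_1)$ once one observes that conjugation by elements of $G_n$ fixes $n$; the factor $\tr_V$ contributes its own trace property on $\End(V)$, and the $K \times K$-invariance ensures everything is compatible. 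Finally, the trivial element $n = e$ gives $\tau_e = $ the integral over all of $G$, i.e. essentially $\int_G \tr_V f(g)\, dg$, whose continuity on $\calS(G) \otimes \End(V)$ is immediate from \eqref{eq unif exp} and whose trace property is the standard unimodularity computation; this covers the full range $n \in N_{2\lambda}$ as stated.
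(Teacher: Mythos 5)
Your overall route---parametrise $G_n\backslash G$ via the Iwasawa decomposition, estimate $|f(kana^{-1}k^{-1})|$ against the exponential seminorms, and deduce the trace property from conjugation-invariance plus unimodularity---is the same as the paper's (which packages the last step as Lemma \ref{lem conj trace}). But the convergence mechanism you describe is not the one that makes the integral finite, and as stated it would fail. Writing $n=\exp(Y)$ with $Y\in\kn_{2\lambda}$ and $a=\exp(tH)$, one has $d_G(e,ana^{-1})=\|\Ad(a)Y\|=a^{2\lambda}\|Y\|$: this grows \emph{exponentially} in $t$ (not linearly) in one direction, and in the opposite direction it \emph{tends to zero}, i.e.\ the conjugates $ana^{-1}$ accumulate at the identity. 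On that half of $A$ the integrand $|f(kana^{-1}k^{-1})|$ converges to $|f(e)|$ and provides no decay whatsoever, so no seminorm of $f$ can help you there. What saves the integral on that half is the Jacobian $a^{2\rho_N}$ coming from the decomposition \eqref{eq Haar KAN} of Haar measure (after quotienting by $N\le G_n$), which by \eqref{eq def rho} decays exponentially precisely where the integrand does not; your plan never invokes this factor. Moreover the inequality \eqref{eq unif exp} cannot substitute for it: it controls integrals over $G$ against Haar measure, not integrals over the noncompact quotient $G_n\backslash G$ with its invariant measure. This Jacobian is exactly where the paper's proof does its work.

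Two smaller points. First, the degenerate case is misidentified: for $n=e$ one has $G_e=G$, so $G_e\backslash G$ is a point and \eqref{eq taug} gives $\tau_e(f)=f(e)$, not $\int_G \tr_V f(g)\,dg$; evaluation at $e$ is of course still a continuous trace, so the conclusion survives, but the functional you wrote down is a different one. Second, the heuristic that $n$ ``behaves like a semisimple element in the direction transverse to its centraliser'' is misleading for the same reason as above: for a nontrivial semisimple $g$ the conjugacy class is closed and stays away from $e$, which is the geometric input behind Proposition \ref{prop tr SG}, whereas for unipotent $n$ the closure of the conjugacy class contains $e$, and convergence is a statement about the measure on $G_n\backslash G$ rather than about decay of $f$ along the orbit.
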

This lemma is proved in Subsection \ref{sec traces}.

For  $a \in A$ and $\xi \in \ka^*$, we write $a^{\xi} := e^{\langle \xi, \log a\rangle}$. Let 
\beq{eq def rho}
\rho_N:= (\dim(\kn_{\lambda})/2+\dim(\kn_{2\lambda}))\lambda.
\eeq
In a few places, we will use the fact that the Haar measure $dg$ on $G$ can be decomposed  as
\beq{eq Haar KAN}
\int_{G} f(g)\, dg = \int_{KAN} f(kan) a^{2\rho_N}\, dk\, da\, dn,
\eeq
for all $f \in C_c(G)$, 
for Haar measures $dk$ on $K$, $da$ on $A$ and $dn$ on $N$.

For $f \in C^{\infty}(G)$ such that the following converges, write
\beq{eq def Tlambda}
\tau_{\lambda}(f) := \int_N \int_K f(knk^{-1})\, dk\, dn.
\eeq
\begin{proposition}\label{prop Tlam}
The integral \eqref{eq def Tlambda} defines a continuous trace $\tau_{\lambda}^V$ on $(\calS(G) \otimes \End(V))^{K \times K}$.
\end{proposition}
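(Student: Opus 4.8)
The plan is to establish (i) absolute convergence of $\tau_\lambda^V$ on $\calS(G)\otimes\End(V)$ together with continuity, and then (ii) the trace property, which will reduce to the conjugation-invariance of the scalar functional $\tau_\lambda$ on $C^\infty_c(G)$. For (i), let $f\in\calS(G)\otimes\End(V)$ and $a>0$; by definition of the seminorm with $P_1=P_2=1$ we have $\|f(g)\|_{\End(V)}\le e^{-ad_G(e,g)}\,\|f\|^V_{a,1,1}$ for all $g\in G$. Since $q\colon G\to X=G/K$ satisfies the quasi-isometry estimate \eqref{eq phi quasi isom}, and since $d$ is $G$-invariant and $k^{-1}K=K$ for $k\in K$, we get $d(eK,nK)=d(q(e),q(knk^{-1}))\le A\,d_G(e,knk^{-1})+B$, hence $\|f(knk^{-1})\|_{\End(V)}\le e^{aB/A}\,e^{-(a/A)d(eK,nK)}\,\|f\|^V_{a,1,1}$. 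It therefore suffices to know that $\int_N e^{-b\,d(eK,nK)}\,dn<\infty$ for $b$ large: writing $n=\exp(X)$ with $X\in\kn=\kn_\lambda\oplus\kn_{2\lambda}$, a standard property of rank-one symmetric spaces provides a logarithmic lower bound $d(eK,nK)\ge c_1\log(1+\|X\|)-c_2$ with $c_1,c_2>0$, so $e^{-b\,d(eK,nK)}$ is integrable over $\kn\cong N$ once $bc_1>\dim\kn$. Combining this with $\int_K dk=1$, we obtain $|\tau_\lambda^V(f)|\le\dim(V)\,C_a\,\|f\|^V_{a,1,1}$ for $a$ large, so $\tau_\lambda^V$ is well-defined and continuous on $\calS(G)\otimes\End(V)$, a fortiori on $(\calS(G)\otimes\End(V))^{K\times K}$; the same estimate shows $\tau_\lambda$ is a continuous functional on $\calS(G)$.

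For (ii), since $\tr_V$ is a trace on $\End(V)$ it suffices to show that $\tau_\lambda$ is a trace on $\calS(G)$: then $\tau_\lambda^V=\tau_\lambda\otimes\tr_V$ is a trace on $\calS(G)\otimes\End(V)$, hence on the subalgebra. By density of $C^\infty_c(G)$ in $\calS(G)$, continuity of convolution (Lemma~\ref{lem alg A infty}) and continuity of $\tau_\lambda$, it is enough to prove $\tau_\lambda(f_1\ast f_2)=\tau_\lambda(f_2\ast f_1)$ for $f_1,f_2\in C^\infty_c(G)$, which (as $G$ is unimodular) follows from the conjugation-invariance $\tau_\lambda(f^c)=\tau_\lambda(f)$ for all $c\in G$, where $f^c(x):=f(c^{-1}xc)$.

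To prove conjugation-invariance, fix $c\in G$ and $k\in K$ and write the Iwasawa decomposition $c^{-1}k=k_1a_1n_1\in KAN$ (with $k_1,a_1,n_1$ depending on $c$ and $k$). Then
\[
c^{-1}(knk^{-1})c=(c^{-1}k)\,n\,(c^{-1}k)^{-1}=k_1\bigl((a_1n_1)\,n\,(a_1n_1)^{-1}\bigr)k_1^{-1},
\]
with $(a_1n_1)\,n\,(a_1n_1)^{-1}\in N$. We change variables $(k,n)\mapsto(k_1,m)$, $m:=(a_1n_1)\,n\,(a_1n_1)^{-1}$, in $\tau_\lambda(f^c)=\int_N\int_K f\bigl(c^{-1}(knk^{-1})c\bigr)\,dk\,dn$. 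For fixed $k$, the map $n\mapsto m$ is conjugation on $N$ by $a_1n_1\in AN$, which in exponential coordinates is the linear automorphism $\Ad(a_1n_1)|_{\kn}$ of $\kn$; its determinant equals $\det(\Ad(a_1)|_{\kn})=a_1^{2\rho_N}$ (using the normalization \eqref{eq def rho} of $\rho_N$, $\kn=\kn_\lambda\oplus\kn_{2\lambda}$, and unipotence of $\Ad(n_1)|_{\kn}$), so $dn=a_1^{-2\rho_N}\,dm$. The remaining integral over $k$ is governed by the standard identity
\[
\int_K F(k_1)\,a_1^{-2\rho_N}\,dk=\int_K F(k)\,dk\qquad(F\in C(K)),
\]
where $k_1=k_1(k)$ and $a_1=a_1(k)$ are the Iwasawa components of $c^{-1}k$; this follows from \eqref{eq Haar KAN} applied to the substitution $g\mapsto c^{-1}g$. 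Applying it with $F(k_1)=\int_N f(k_1\,m\,k_1^{-1})\,dm$, the two factors $a_1^{-2\rho_N}$ cancel, giving $\tau_\lambda(f^c)=\int_N\int_K f(k_1\,m\,k_1^{-1})\,dk_1\,dm=\tau_\lambda(f)$.

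\textbf{The main obstacle} is the precise bookkeeping in (ii): tracking the Iwasawa components of $c^{-1}k$ through the conjugation, matching $\det(\Ad(a_1)|_{\kn})$ with $a_1^{2\rho_N}$ for the normalization \eqref{eq def rho}, and invoking the $K$-integration formula in exactly the form (with $c^{-1}$ and exponent $-2\rho_N$) whose Jacobian cancels that of conjugation on $N$. Part (i) is routine given the exponential decay built into $\calS(G)$, although it does use the rank-one hypothesis for the logarithmic lower bound on $d(eK,\cdot)$ along $N$.
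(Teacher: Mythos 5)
Your proof is correct, and both halves take a genuinely different route from the paper's. For continuity, the paper simply quotes Osborne--Warner for continuity of $\tau_\lambda$ on $\cC^2(G)$ and then invokes the continuous embedding $\calS(G)\hookrightarrow\cC^p(G)$ (Lemma \ref{lem incl SC}); your direct estimate, pairing the exponential decay built into the seminorm $\|\cdot\|^V_{a,1,1}$ with the logarithmic lower bound for $d(eK,nK)$ along $N$, is self-contained modulo that standard rank-one distance asymptotic (which you assert rather than prove -- it is standard, but it is the one external input of your argument, and it is in the same spirit as the explicit computation the paper carries out for $\tau_n$ in Lemma \ref{lem T2lam SG}). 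For the trace property, both proofs reduce to conjugation-invariance of $\tau_\lambda$ via Lemma \ref{lem conj trace}; but where you run the classical Iwasawa bookkeeping -- writing $c^{-1}k=k_1a_1n_1$, cancelling the Jacobian $a_1^{2\rho_N}$ of conjugation on $N$ against the factor $a_1^{-2\rho_N}$ in the $K$-integration formula $\int_K F(\kappa(c^{-1}k))\,a_1(k)^{-2\rho_N}\,dk=\int_K F(k)\,dk$ -- the paper sidesteps this bookkeeping by introducing a cutoff $\varphi\in C_c(G)$ with $\int_{AN}\varphi(xan)\,da\,dn=1$, rewriting $\tau_\lambda(f)=\int_N\int_G\varphi(g)f(gng^{-1})\,dg\,dn$ (the same Jacobian $a^{2\rho_N}$ appears once, absorbed by \eqref{eq Haar KAN}), and observing that replacing $\varphi$ by $L_h\varphi$ changes nothing. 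The two computations are equivalent in content; yours is the more classical and explicit, the paper's avoids having to track the Iwasawa components of $c^{-1}k$ at the cost of introducing the auxiliary function. Both your Jacobian identifications ($\det\Ad(a_1n_1)|_{\kn}=a_1^{2\rho_N}$ by unipotence of $\Ad(n_1)|_{\kn}$, and the exponent $-2\rho_N$ in the $K$-formula, which indeed follows from left-invariance applied to \eqref{eq Haar KAN}) check out.
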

This proposition is proved in Subsection \ref{sec traces}.


\subsection{Spectral traces}\label{sec spec tr}

Apart from the geometric traces of Subsection \ref{sec prelim traces}, we also use some `spectral' traces on $(\calS(G) \otimes \End(V))^{K \times K}$.

Let $\Gamma<G$ be a discrete subgroup such that $\Gamma \backslash G$ has finite volume. 
As in \cite{BM83, OW78, Warner79}, we make the following simplifying technical assumption. Let $P_1, \ldots, P_l$ be representatives of the $\Gamma$-conjugacy classes of  $\Gamma$-cuspidal parabolic subgroups of $G$, with Langlands decompositions $P_j = M_j A_j N_j$. Let $Z_{\Gamma}$ be the centre of $\Gamma$. Then we assume that for all $j$, 
\beq{eq condition Gamma}
\Gamma \cap P_j = Z_{\Gamma} \cdot (\Gamma \cap N_j).
\eeq
This condition implies that a noncentral element $\gamma \in \Gamma$ is semisimple if and only if its conjugacy class has empty intersection with each of the groups $P_j$; see Lemma 5.3 in \cite{Warner79}. That in turn implies that for all semisimple $\gamma \in \Gamma$, the quotient $\Gamma_{\gamma} \backslash G_{\gamma}$ has finite volume; see Lemma 5.4 in \cite{Warner79}. In general, a discrete subgroup $\Gamma<G$ such that $\Gamma \backslash G$ has finite volume has a finite-index normal subgroup with the above property. (See page 56 of \cite{Warner79} and Lemma 6.5 in \cite{GR70}.) It is possible to work without this condition; then noncentral semisimple elements of $\Gamma$ should be replaced by noncentral elements whose  conjugacy classes have empty intersections with the groups $P_j$.
\begin{example}
Let $G = \SL(2, \R)$ and $\Gamma = \SL(2, \Z)$. There is one $\Gamma$-conjugacy class of $\Gamma$-cuspidal parabolic subgroups, represented by the subgroup $P<G$ of upper-triangular matrices. In its Langlands decomposition $P = MA'N'$, we have
\[
\begin{split}
N' &= \left\{  
\begin{pmatrix}
1 & b \\ 0 & 1
\end{pmatrix}; b \in \R
\right\}, \quad \text{so}\\
\Gamma \cap N' &= \left\{  
\begin{pmatrix}
1 & b \\ 0 & 1
\end{pmatrix}; b \in \Z
\right\}.
\end{split}
\]
And $Z_{\Gamma} = \{\pm I\}$, so 
\[
Z_{\Gamma} \cdot (\Gamma \cap N') = \left\{  
\begin{pmatrix}
a & b \\ 0 & a
\end{pmatrix}; a \in \{\pm 1\}, b \in \Z
\right\} = \Gamma \cap P.
\]
So \eqref{eq condition Gamma} holds.
\end{example}

There are $k_1, \ldots, k_l \in K$ such that $P_j = k_j P k_j^{-1}$ for all $j$. So if $\sigma \in \hat M$, then  via conjugation, we have the representation $k_j \cdot \sigma$ of $M_j = k_j M k_j^{-1}$. This yields the representation
\[
\pi_{\Gamma}(\sigma) :=   \dim(\sigma) \Ind_{P_j}^G(k_j \cdot \sigma \otimes 1_{AN})
\]
of $G$. There is a unitary intertwining operator $c_{\Gamma}(\sigma)$ for this representation that squares to the identity.
\begin{lemma}\label{lem tau res}
The map 
\beq{eq tau res}
f\mapsto \sum_{\sigma \in \hat M} (\Tr \otimes \tr_V)(c_{\Gamma}(\sigma) \pi_{\Gamma}(\sigma)(f))
\eeq
defines a continuous trace $\tau_{\res}$ on $(\calS(G) \otimes \End(V))^{K \times K}$.
\end{lemma}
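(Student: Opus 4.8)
The plan is to observe that, although the sum over $\hat M$ in \eqref{eq tau res} is formally infinite, it collapses to a fixed finite sub-sum once $f$ is restricted to the subalgebra $(\calS(G)\otimes\End(V))^{K\times K}$ — this being precisely the role of the restriction to finitely many $K$-types — and then to check that each individual summand is a continuous trace on that subalgebra.

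\textbf{Reduction to a finite sum.} Since $G$ has real rank $1$, the groups $M_j$ are compact, so every $\sigma\in\hat M$ is finite-dimensional; and using $G/P_j\cong K/M_j$, the representation $\pi_\Gamma(\sigma)$ restricts to $K$ as a multiple of $\Ind_{M_j}^K(k_j\cdot\sigma)$, whose $\tau$-isotypic subspace for a $K$-type $\tau$ is finite-dimensional, of dimension depending only on $\sigma$ and $\tau$ (Frobenius reciprocity). An element $f\in(\calS(G)\otimes\End(V))^{K\times K}$ satisfies $f(kgk'^{-1})=\pi(k)f(g)\pi(k')^{-1}$, hence is bi-$K$-finite with left and right $K$-types occurring in $V$; therefore the integrated operator $\pi_\Gamma(\sigma)(f)$ on $\calH_\sigma\otimes V$ factors through the projections onto the isotypic subspaces of the finitely many $K$-types of $V$, and in particular vanishes unless $k_j\cdot\sigma$ occurs in $\tau|_{M_j}$ for some $K$-type $\tau$ of $V$ and some $j$. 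As $V$ has only finitely many $K$-types and each of their restrictions to $M_j$ is a finite sum of irreducibles, the set $\hat M_V\subseteq\hat M$ of such $\sigma$ is finite and independent of $f$, and
\[
\tau_{\res}(f)=\sum_{\sigma\in\hat M_V}\tau_{\res,\sigma}(f),\qquad \tau_{\res,\sigma}(f):=(\Tr\otimes\tr_V)\bigl(c_\Gamma(\sigma)\,\pi_\Gamma(\sigma)(f)\bigr).
\]

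\textbf{Each summand is a continuous trace.} Fix $\sigma\in\hat M_V$. By the previous step $\pi_\Gamma(\sigma)(f)$ has rank bounded by a constant $N_\sigma$ independent of $f$, so $c_\Gamma(\sigma)\pi_\Gamma(\sigma)(f)$ is trace class and $\tau_{\res,\sigma}(f)$ is well defined, with
\[
|\tau_{\res,\sigma}(f)|\le N_\sigma\,\|c_\Gamma(\sigma)\|\,\|\pi_\Gamma(\sigma)(f)\|=N_\sigma\,\|\pi_\Gamma(\sigma)(f)\|,
\]
since $c_\Gamma(\sigma)$ is unitary. Because $G$ has uniformly exponential volume growth, $\calS(G)$ embeds continuously into $L^1(G)$: for $a$ large enough, $\int_G\|f(g)\|_{\End(V)}\,dg\le\|f\|_{a,1,1}^V\int_G e^{-a\,d_G(e,g)}\,dg<\infty$. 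As $\pi_\Gamma(\sigma)$ is a unitary representation, $\|\pi_\Gamma(\sigma)(f)\|\le\int_G\|f(g)\|_{\End(V)}\,dg$, so $|\tau_{\res,\sigma}(f)|\le C_\sigma\,\|f\|_{a,1,1}^V$; summing over the finite set $\hat M_V$ gives continuity of $\tau_{\res}$. For the trace property, note that $f\mapsto\pi_\Gamma(\sigma)(f)$ turns convolution into operator composition, and $c_\Gamma(\sigma)$ — acting on the $\calH_\sigma$ factor, hence commuting with $\pi_\Gamma(\sigma)(g)$ for all $g\in G$ and therefore with $\pi_\Gamma(\sigma)(f)$ for all $f$ — may be moved past $\pi_\Gamma(\sigma)(f)$; cyclicity of $\Tr\otimes\tr_V$ on trace-class operators then yields $\tau_{\res,\sigma}(ff')=\tau_{\res,\sigma}(f'f)$, and summing over $\hat M_V$ shows $\tau_{\res}$ is a trace.

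\textbf{Main obstacle.} The one substantive point is the reduction to a finite sum: one must verify carefully that on the $K\times K$-invariant, $\End(V)$-valued subalgebra the operators $\pi_\Gamma(\sigma)(f)$ vanish for all $\sigma$ outside a fixed finite set, which is exactly where the restriction to finitely many $K$-types is essential. Everything else (the rank bound and trace-class estimate, the continuous inclusion $\calS(G)\hookrightarrow L^1(G)$ via uniform exponential growth, and cyclicity of the operator trace) is routine bookkeeping.
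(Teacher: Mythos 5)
Your proof is correct, but it follows a genuinely different and more self-contained route than the paper. The paper disposes of the lemma in three lines: it quotes the remark below Theorem 8.4 in Warner's paper to the effect that \eqref{eq tau res} is a continuous functional on $(\cC^p(G)\otimes\End(V))^{K\times K}$ for $0<p<1$, pulls this back along the continuous inclusion $\calS(G)\hookrightarrow\cC^p(G)$ of Lemma \ref{lem incl SC}, and then gets the trace property from conjugation-invariance via the general Lemma \ref{lem conj trace}. You instead prove everything from scratch: the collapse of the sum over $\hat M$ to a fixed finite set (via compactness of $M$ in real rank one, the identification $\Ind_{P_j}^G(\cdot)|_K\cong\Ind_{M_j}^K(\cdot)$, Frobenius reciprocity, and the bi-$K$-finiteness forced by the $K\times K$-invariance), the uniform finite-rank bound on $\pi_\Gamma(\sigma)(f)$, continuity from the continuous embedding $\calS(G)\hookrightarrow L^1(G)$ supplied by uniform exponential growth, and the trace property from the fact that $c_\Gamma(\sigma)$ is a self-intertwiner together with cyclicity of the operator trace on finite-rank operators. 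Your argument in effect reconstructs the content of the Warner citation and replaces the appeal to Lemma \ref{lem conj trace} by a direct cyclicity computation (which is essentially equivalent, since conjugation-invariance of $\tau_{\res}$ also comes from $c_\Gamma(\sigma)$ commuting with $\pi_\Gamma(\sigma)(h)$). What your route buys is transparency and independence from the trace-formula literature; what it costs is length, and a couple of cosmetic imprecisions worth tidying: the finite-rank operator really lives on $(\calH(\sigma)\otimes V)^K$ rather than on the isotypic pieces of $V$ per se (the $K\times K$-invariance of $f$ gives $\pi_\Gamma(\sigma)^V(f)=P\,\pi_\Gamma(\sigma)^V(f)\,P$ with $P$ the projection onto $K$-invariants), and the multiplicity condition should be stated for the contragredient $K$-types of $V$; neither affects the finiteness conclusion.
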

This lemma is proved in Subsection \ref{sec trace Gamma}.

Consider the right quasiregular representation $R^{\Gamma}$ of $G$ in $L^2(\Gamma \backslash G)$: for all $g,h \in G$ and $\varphi \in L^2(\Gamma \backslash G)$,
\[
(R^{\Gamma}(g)\varphi)(\Gamma h) = \varphi(\Gamma hg).
\]
Write $L^2(\Gamma \backslash G) = L^2_d(\Gamma \backslash G) \oplus L^2_c(\Gamma \backslash G)$, where $L^2_d(\Gamma \backslash G)$ decomposes discretely into irreducible representations of $G$, and $L^2_c(\Gamma \backslash G)$ decomposes continuously. Let $R^{\Gamma}_d$ be the restriction of   $R^{\Gamma}$ to $L^2_d(\Gamma \backslash G)$. It defines a representation of $L^1(G)$ in the usual way. We consider its extension
\[
(R^{\Gamma}_d)^V\colon (L^1(G) \otimes \End(V))^{K \times K} \to \cB((L^2_d(\Gamma \backslash G) \otimes V)^K).
\]

\begin{proposition}\label{prop Rd cts}
For all $f \in (\calS(G)\otimes \End(V))^{K \times K} $, the operator $(R_d^{\Gamma})^V(f)$ is trace-class. 
The algebra homomorphism
\beq{eq RdV}
(R_d^{\Gamma})^V\colon(\calS(G)\otimes \End(V))^{K \times K} \to \calL^1((L^2_d(\Gamma \backslash G) \otimes V)^K) 
\eeq
is continuous.
\end{proposition}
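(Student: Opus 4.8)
The plan is to reduce the trace-class statement to a known fact about the discrete spectrum of $L^2(\Gamma\backslash G)$ combined with the rapid decay built into $\calS(G)$. First I would recall that $R^\Gamma_d$ decomposes as a Hilbert space direct sum $\bigoplus_\pi m(\pi)\,\pi$ over the discrete series (more precisely, the irreducible unitary representations $\pi$ of $G$ occurring discretely in $L^2(\Gamma\backslash G)$), with finite multiplicities $m(\pi)<\infty$. Since we pass to the $K\times K$-fixed (equivalently, finitely many $K$-types appearing in $\End(V)$) subalgebra, only those $\pi$ with a nonzero vector in the relevant $K$-isotypical components contribute, and on each such $\pi$ the operator $\pi(f)$ for $f\in(\calS(G)\otimes\End(V))^{K\times K}$ lands in $\Hom_K(V^*, \pi\otimes\cdots)$-type spaces of finite dimension. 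So for fixed $f$ the operator $(R^\Gamma_d)^V(f)$ is a direct sum of finite-rank operators, and the only issue is summability of the trace norms over $\pi$.

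The key input is a bound of the form $\|\pi(f)\|_{\calL^1}\le \nu(f)\cdot(1+\|\Lambda_\pi\|)^{-N}$ (or a similar polynomial bound in the infinitesimal character $\Lambda_\pi$, or in the Casimir eigenvalue), where $\nu$ is one of the defining seminorms of $\calS(G)\otimes\End(V)$ and $N$ can be taken arbitrarily large. This type of estimate is standard: writing $f = (1+\Omega)^{-M}\cdot(1+\Omega)^M f = (1+\Omega)^{-M} R(\text{Casimir polynomial})(f)$ for the Casimir element $\Omega\in\cU(\kg)$, and noting that $R((1+\Omega)^M)f$ is again in $\calS(G)\otimes\End(V)$ with a seminorm controlled by the seminorms of $f$, one gets $\pi(f) = (1+\lambda_\pi)^{-M}\pi(g)$ with $g\in\calS(G)\otimes\End(V)$ and $\lambda_\pi$ the Casimir eigenvalue of $\pi$; the trace-class norm of $\pi(g)$ on the fixed finite-dimensional $K$-isotypical piece is bounded by $C\cdot\|g\|_{\cB}\le C\cdot\nu'(f)$ times the (bounded) dimension. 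Then summability follows from the classical fact (Harish-Chandra; see also the trace formula literature, e.g.\ \cite{Warner79}) that $\sum_\pi m(\pi)(1+|\lambda_\pi|)^{-M}<\infty$ for $M$ large enough, using that $\Gamma\backslash G$ has finite volume and that the discrete spectrum has the expected polynomial growth of multiplicities with respect to the Casimir eigenvalue. Combining, $\|(R^\Gamma_d)^V(f)\|_{\calL^1}\le \sum_\pi m(\pi)\|\pi(f)\|_{\calL^1}\le \tilde\nu(f)\sum_\pi m(\pi)(1+|\lambda_\pi|)^{-M}<\infty$, which simultaneously proves the trace-class property and the continuity of \eqref{eq RdV} (the final bound is exactly a continuity estimate against a single seminorm of $\calS(G)\otimes\End(V)$).

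Two points need care. First, I must make sure the Casimir-shifting trick respects the $K\times K$-invariance and the $\End(V)$-tensor factor: since $\Omega$ is central in $\cU(\kg)$ and acts by $R$, the element $R((1+\Omega)^M)f$ still lies in $(\calS(G)\otimes\End(V))^{K\times K}$, and the seminorm estimate is immediate from the definition of $\|\cdot\|^V_{a,P_1,P_2}$ with $P_2$ replaced by $(1+\Omega)^M P_2$. Second, and this is the main obstacle, I need the summability $\sum_\pi m(\pi)(1+|\lambda_\pi|)^{-M}<\infty$ over the \emph{discrete} spectrum of $L^2(\Gamma\backslash G)$ in the finite-volume (non-cocompact) case; this is genuinely a theorem rather than a formality, but it is available in the literature on which \cite{BM83, OW78, Warner79} rely (it underlies the absolute convergence of the spectral side of the trace formula), and I would cite it, noting that restriction to finitely many $K$-types further improves the growth. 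Modulo invoking that input, the rest is the routine seminorm bookkeeping sketched above.
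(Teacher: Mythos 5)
Your outline is correct, but it follows a genuinely different route from the paper's, so let me compare. The paper never decomposes $L^2_d(\Gamma\backslash G)$ into irreducibles: it quotes Theorem 4.6 of \cite{Warner79} for the trace-class property of $R_d^{\Gamma}(f)$ for $K$-finite $f\in\cC^1(G)$, and for continuity it splits $L^2_d(\Gamma\backslash G)$ into the space of cusp forms and its orthogonal complement, strengthens Warner's pp.\ 39--40 argument to a bound of the form $\tr(|R_0^{\Gamma}(\alpha)|)\le C\|\Delta_L^{2N}\alpha\|_1$ on the cuspidal part, and handles the complement by observing that, in the real rank one setting and after restricting to finitely many $K$-types, it is finite-dimensional, so the trace norm there is controlled by $\|f\|_{L^1(G)}$; everything is then transported to $\calS(G)$ via Lemma \ref{lem incl SC}. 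Your route --- admissibility to get uniformly bounded rank of $\pi(f)$, a Casimir shift to gain decay in the Casimir eigenvalue $\lambda_\pi$, and summability of $\sum_\pi m(\pi)(1+|\lambda_\pi|)^{-M}$ over the discrete spectrum --- buys a more self-contained and representation-theoretically transparent continuity estimate, at the price of invoking a spectral counting input of depth comparable to Warner's theorem. Two points need tightening. First, $1+\lambda_\pi$ can vanish for unitary $\pi$; you should shift by a constant $c$ chosen so that $c+\lambda_\pi\ge 1$ for all $\pi$ occurring in $L^2_d(\Gamma\backslash G)$ with a $K$-type in the relevant finite set (the Casimir is bounded below there, e.g.\ by comparing with the elliptic operator $-\Omega+2\Omega_K$). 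Second, the summability you cite must cover the \emph{residual} (non-cuspidal) part of the discrete spectrum, not only the cuspidal part for which Weyl-law estimates are classical; in the paper's real rank one setting with hypothesis \eqref{eq condition Gamma} that part is finite-dimensional once restricted to finitely many $K$-types (this is exactly how the paper disposes of it), while in general one needs M\"uller's trace class theorem, so the citation must be chosen accordingly and the residual part treated separately rather than absorbed into a single Weyl-law reference.
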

\begin{corollary}\label{cor Rd trace}
The map $f \mapsto \Tr(R_d^{\Gamma}(f))$ defines a continuous trace $(\Tr \circ R_d^{\Gamma})^V$ on $(\calS(G) \otimes \End(V))^{K \times K}$.
\end{corollary}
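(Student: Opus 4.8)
The plan is to deduce the corollary directly from Proposition~\ref{prop Rd cts} and the standard properties of the operator trace on the ideal of trace-class operators. By Proposition~\ref{prop Rd cts}, for every $f \in (\calS(G)\otimes\End(V))^{K\times K}$ the operator $(R_d^{\Gamma})^V(f)$ lies in $\calL^1\bigl((L^2_d(\Gamma\backslash G)\otimes V)^K\bigr)$, so $\Tr\bigl((R_d^{\Gamma})^V(f)\bigr)$ is well-defined; hence $(\Tr\circ R_d^{\Gamma})^V := \Tr\circ (R_d^{\Gamma})^V$ is a well-defined linear functional on $(\calS(G)\otimes\End(V))^{K\times K}$.

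To see that it is a trace, I would combine the fact that $(R_d^{\Gamma})^V$ is an algebra homomorphism (again Proposition~\ref{prop Rd cts}) with the identity $\Tr(AB)=\Tr(BA)$, valid whenever $A$ is trace-class and $B$ is bounded. For $f_1,f_2 \in (\calS(G)\otimes\End(V))^{K\times K}$, writing the convolution product as juxtaposition, this gives
\[
\Tr\bigl((R_d^{\Gamma})^V(f_1 f_2)\bigr) = \Tr\bigl((R_d^{\Gamma})^V(f_1)\,(R_d^{\Gamma})^V(f_2)\bigr) = \Tr\bigl((R_d^{\Gamma})^V(f_2)\,(R_d^{\Gamma})^V(f_1)\bigr) = \Tr\bigl((R_d^{\Gamma})^V(f_2 f_1)\bigr),
\]
so $(\Tr\circ R_d^{\Gamma})^V$ vanishes on commutators.

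For continuity, I would use the continuity of $(R_d^{\Gamma})^V$ as a map into $\calL^1$ equipped with the trace norm (the last assertion of Proposition~\ref{prop Rd cts}), together with the elementary estimate $|\Tr(A)|\le \|A\|_1$ for $A \in \calL^1$; the composition of a continuous linear map between Fr\'echet spaces with a continuous linear functional is continuous. Since each of these three points is immediate once Proposition~\ref{prop Rd cts} is available, I expect no genuine obstacle in the corollary itself: the substantive work — proving trace-class-ness of $(R_d^{\Gamma})^V(f)$ and continuity of \eqref{eq RdV} in the Fr\'echet topologies, which presumably relies on an elliptic/Sobolev estimate on $L^2_d(\Gamma\backslash G)$ together with control of the relevant $G$-multiplicities — is already packaged into Proposition~\ref{prop Rd cts}.
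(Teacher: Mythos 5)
Your proposal is correct and follows essentially the same route as the paper, which likewise deduces the corollary from Proposition \ref{prop Rd cts} (via its $K$-finite version) together with the standard facts that the operator trace is tracial and bounded by the $\calL^1$-norm. The paper leaves these last steps implicit, so your write-up simply makes explicit what the authors treat as immediate.
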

Proposition \ref{prop Rd cts} and Corollary \ref{cor Rd trace} are proved in Subsection \ref{sec trace Gamma}.

Let $A(\kn_{\lambda})$ be the area of the unit sphere in $\kn_{\lambda}$, and let $C_{\lambda}(\Gamma)$ and $C_{2\lambda}(\Gamma)$ be as on page 299 of \cite{OW78}. Fix $n_0 \in N_{2\lambda}$ such that $\|\log(n_0)\| = 1$.
For $f \in C^{\infty}(G)$ such that  $R^{\Gamma}_d(f)$ is trace-class and the other terms converge, we consider the `remainder' 
\begin{multline} \label{eq def T}
\tau_{\rem}(f) := \Tr(R^{\Gamma}_d(f)) + \frac{1}{4} \tau_{\res} (f)\\
- \sum_{(\gamma) \text{ semisimple}} \vol(\Gamma_{\gamma}\backslash G_{\gamma})\tau_{\gamma}(f) -  \frac{C_{2\lambda}(\Gamma)}{2}(\tau_{n_0}(f) + \tau_{n_0^{-1}}(f)) -  \frac{C_{\lambda}(\Gamma)}{A(\kn_{\lambda}) |\lambda|} \tau_{\lambda} (f),
\end{multline}
Here the sum on the right hand side is over the $\Gamma$-conjugacy classes $(\gamma)$ of semisimple elements of $\Gamma$.

\begin{remark}
An explicit form of the functional $\tau_{\rem}$ is given in \cite{OW78}: see the theorems on pages 293 and 299 in that paper.
\end{remark}

\begin{proposition}\label{prop trace T}
The functional $\tau_{\rem}$ defines a continuous trace $\tau_{\rem}^V$  on $(\calS(G) \otimes \End(V))^{K \times K}$.
\end{proposition}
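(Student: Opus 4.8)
The plan is to exhibit $\tau_{\rem}^V$ as a fixed linear combination of six functionals, five of which are already known to be continuous traces, and then deal with the sixth. By Corollary~\ref{cor Rd trace}, Lemma~\ref{lem tau res}, Lemma~\ref{lem T2lam} (applied to $n_0 \in N_{2\lambda}$ and to $n_0^{-1} \in N_{2\lambda}$) and Proposition~\ref{prop Tlam}, the functionals $(\Tr \circ R_d^{\Gamma})^V$, $\tau_{\res}^V$, $\tau_{n_0}^V$, $\tau_{n_0^{-1}}^V$ and $\tau_{\lambda}^V$ are continuous traces on $(\calS(G) \otimes \End(V))^{K \times K}$; in particular every term of \eqref{eq def T} except the sum over semisimple conjugacy classes converges for all $f$ in this algebra. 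Hence the whole content of the proposition is that
\[
\tau_{\mathrm{ss}}^V(f) := \sum_{(\gamma) \text{ semisimple}} \vol(\Gamma_{\gamma} \backslash G_{\gamma})\, \tau_{\gamma}^V(f)
\]
converges absolutely for every $f \in (\calS(G) \otimes \End(V))^{K \times K}$ and defines a continuous trace; granting this, $\tau_{\rem}^V = (\Tr \circ R_d^{\Gamma})^V + \tfrac14 \tau_{\res}^V - \tau_{\mathrm{ss}}^V - \tfrac{C_{2\lambda}(\Gamma)}{2}(\tau_{n_0}^V + \tau_{n_0^{-1}}^V) - \tfrac{C_{\lambda}(\Gamma)}{A(\kn_{\lambda})|\lambda|}\tau_{\lambda}^V$ is a finite linear combination of continuous traces, hence a continuous trace.

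Each $\tau_{\gamma}^V$ with $\gamma$ semisimple is a continuous trace by Proposition~\ref{prop tr SG}, so it is enough to bound the series for $\tau_{\mathrm{ss}}^V$ by a fixed continuous seminorm. For $a>0$ and semisimple $\gamma \in \Gamma$ one has
\[
|\tau_{\gamma}^V(f)| \le \dim(V)\, \|f\|_{a,1,1}^V \int_{G_{\gamma} \backslash G} e^{-a\, d_G(e,\, h^{-1}\gamma h)}\, d(G_{\gamma} h).
\]
The integrand is constant on $G_{\gamma}$-cosets, so multiplying by $\vol(\Gamma_{\gamma} \backslash G_{\gamma})$ --- finite for semisimple $\gamma$ by Lemma~5.4 of \cite{Warner79}, and taken with Haar measures compatible with those used to define $\tau_\gamma$ --- collapses the product to an integral over $\Gamma_{\gamma} \backslash G$; summing over $\Gamma$-conjugacy classes of semisimple elements and unfolding to $\Gamma \backslash G$ gives
\[
\sum_{(\gamma) \text{ ss}} \vol(\Gamma_{\gamma} \backslash G_{\gamma})\, |\tau_{\gamma}^V(f)| \le \dim(V)\, \|f\|_{a,1,1}^V \int_{\Gamma \backslash G} \sum_{\gamma \in \Gamma \text{ ss}} e^{-a\, d_G(e,\, h^{-1}\gamma h)}\, d(\Gamma h).
\]

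The crux is therefore the finiteness of the right-hand side for $a$ large enough, i.e.\ the absolute convergence of the elliptic-and-hyperbolic part of the geometric side of the Osborne--Warner trace formula. This is also where the finite covolume of $\Gamma$, rather than cocompactness, is controlled: by hypothesis~\eqref{eq condition Gamma} every nontrivial element of $\Gamma \cap N_j$ is unipotent, hence not semisimple, so the cuspidal contributions that would otherwise force divergence are absent from the semisimple part of the kernel. Concretely, $(\calS(G) \otimes \End(V))^{K \times K}$ only involves finitely many $K$-types of $\calS(G)$, and on those $\calS(G)$ embeds continuously into a Harish-Chandra Schwartz space $\cC^p(G)$ by Lemma~\ref{lem incl SC}; the required convergence, with a bound by a continuous seminorm on $\cC^p(G)$, is part of the analysis in \cite{OW78, Warner79}. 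I expect this estimate --- together with the careful bookkeeping of Haar measure normalisations in the unfolding above --- to be the main obstacle; the rest is citations and routine manipulation.

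Granting the estimate, $\tau_{\mathrm{ss}}^V$ is an absolutely convergent series of continuous traces dominated by the seminorm $\|\cdot\|_{a,1,1}^V$, hence itself a continuous trace, and therefore so is $\tau_{\rem}^V$; this proves the proposition. As a cross-check, by the theorems on pages~293 and~299 of \cite{OW78} the functional $\tau_{\rem}$ agrees on the dense subalgebra coming from $C^{\infty}_c(G)$ with an explicit expression in orbital-integral-type terms, and the continuous extension of that expression must coincide with $\tau_{\rem}^V$, giving an independent route to continuity.
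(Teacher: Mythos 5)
Your reduction is sound and the statement you isolate as ``the whole content of the proposition'' is correctly identified, but you have inverted the logic of the paper's argument: what you relegate to a closing ``cross-check'' is precisely the proof the paper gives. The paper does not establish convergence of the semisimple sum $\tau_{\mathrm{ss}}^V$ separately; instead it invokes Theorem 8.4 of \cite{Warner79} and the theorem on page 299 of \cite{OW78}, which provide an \emph{explicit alternative expression} for $\tau_{\rem}$ (not the defining difference \eqref{eq def T}) that is manifestly a continuous linear functional on $\cC^p(G)$ for $0<p<1$; continuity on $\calS(G)$ then follows from the embedding of Lemma \ref{lem incl SC}, and the trace property follows because $\tau_{\rem}$ differs from the manifestly tracial functionals $(\Tr\circ R_d^{\Gamma})^V$ and $\tau_{\res}^V$ by a (convergent, by the same trace formula) combination of the traces from Propositions \ref{prop tr SG}, \ref{prop Tlam} and Lemma \ref{lem T2lam}. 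Your main route --- unfolding $\sum_{(\gamma)\ \mathrm{ss}}\vol(\Gamma_\gamma\backslash G_\gamma)|\tau_\gamma^V(f)|$ to an integral over $\Gamma\backslash G$ and bounding it by $\|f\|^V_{a,1,1}$ for large $a$ --- would also work, and has the merit of making the role of condition \eqref{eq condition Gamma} and of the exponential seminorms visible; but the decisive estimate (integrability over $\Gamma\backslash G$ of the semisimple part of the kernel in the finite-volume case, with a bound by a continuous seminorm) is exactly the hard analytic content of the Osborne--Warner trace formula, and you neither carry it out nor locate it more precisely than ``part of the analysis in \cite{OW78, Warner79}''. So your argument is not wrong, but as written it trades one black-box citation for another, harder-to-pin-down one; the paper's organisation, which you only mention in passing at the end, is the shorter and more defensible way to cite the same source.
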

This proposition is proved at the end of Subsection \ref{sec trace Gamma}.

\subsection{Relation with the classical index and the $\Gamma$-index}\label{sec index base}

We first note that the index of elliptic operators on real rank one, finite-volume locally symmetric spaces studied in \cite{BM83, Moscovici82} can be recovered from the higher index of Definition \ref{def H index}. 




Consider the setting of Subsection \ref{sec GK ex}. 
We consider the case of the index \eqref{eq ind H DW} where $H=G$:
\beq{eq ind G DW}
\ind_{S, G}(D_W) \in K_0(\cA_{S, q}(E)^G).
\eeq
Using the isomorphism $\Psi\colon \cA_{S, q}(E)^G \to (\calS(G) \otimes \End(V))^{K \times K}$ from Lemma \ref{lem AEG}, we can map this index to $K_0((\calS(G) \otimes \End(V))^{K \times K})$. Then applying the map induced by the algebra homomorphism   $(R^{\Gamma}_d)^V$ from Proposition \ref{prop Rd cts} yields an element of 
\beq{eq iso L1}
K_0\bigl(
\calL^1(L^2_d(\Gamma \backslash G) \otimes V)\bigr) = \Z.
\eeq

Let $D_W^{\Gamma}$ be the elliptic differential operator on $\Gamma \backslash G \times_K V \to \Gamma \backslash G/K$ induced by $D_W$. It has a well-defined index by Theorem 2.1 in \cite{Moscovici82}. 
\begin{corollary}\label{cor DGamma}
If $G$ has real rank $1$, then 
under the isomorphism \eqref{eq iso L1},
 we have
\beq{eq higher lower}
(R^{\Gamma}_d)^V _* \circ \Psi_*(\ind_{S, G}(D_W) ) = \ind(D_W^{\Gamma}).
\eeq
\end{corollary}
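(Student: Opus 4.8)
The plan is to track the standard heat-kernel idempotent for $D_W$ through the chain of maps and identify its image with the McKean--Singer / Atiyah--Bott representative of $\ind(D_W^\Gamma)$. By Theorem~\ref{thm def index}, $\ind_{S,G}(D_W)$ is represented by the matrix idempotent in \eqref{eq index idempotent}, built from the heat operators $e^{-tD^\mp D^\pm}$ and the off-diagonal terms involving $\frac{1-e^{-tD^-D^+}}{D^-D^+}D^-$ and $e^{-\frac t2 D^+D^-}D^+$. Under $\Psi$ (Lemma~\ref{lem AEG}) these become convolution kernels in $(\calS(G)\otimes\End(V))^{K\times K}$; concretely, $\Psi$ sends a $G$-invariant smoothing kernel on $G/K$ to the corresponding $K$-bi-invariant function on $G$, and the heat kernel of $D_W^2$ on $G/K$ is exactly such an invariant kernel. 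Then $(R_d^\Gamma)^V$ pushes these to trace-class operators on $(L^2_d(\Gamma\backslash G)\otimes V)^K$, by Proposition~\ref{prop Rd cts}.

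The first substantive step is to identify $(R_d^\Gamma)^V\!\big(\Psi(e^{-tD^2})\big)$ with the restriction to $L^2_d$ of the heat operator $e^{-t(D_W^\Gamma)^2}$ on $\Gamma\backslash G/K$. This is the usual unfolding: the Schwartz kernel of $e^{-t(D_W^\Gamma)^2}$ on the quotient is $\sum_{\gamma\in\Gamma}$ of the $G$-invariant heat kernel on $G/K$, which is precisely how $R^\Gamma$ acts on a convolution kernel. One must check that this heat operator genuinely preserves the decomposition $L^2 = L^2_d\oplus L^2_c$ and that its restriction to $L^2_d$ is trace-class — the latter is Proposition~\ref{prop Rd cts}, and preservation of the decomposition follows because $e^{-t(D_W^\Gamma)^2}$ is a function of the Casimir-type operator $R^\Gamma(\Omega)$ and commutes with $R^\Gamma$. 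Next, since $K_0$ of the target is $\Z$ and the class is detected by the operator trace, it suffices to compute $\Tr$ of the difference of the two idempotents in \eqref{eq index idempotent} after applying $(R_d^\Gamma)^V\circ\Psi_*$. A short computation with the $2\times 2$ matrix (the off-diagonal terms contribute nilpotent pieces whose trace against the relevant projections cancels, exactly as in the Connes--Moscovici argument \cite{CM90}) reduces this to the supertrace
\[
\Str\big(e^{-t(D_W^\Gamma)^2}\big|_{L^2_d}\big) = \Tr\big(e^{-t(D_W^{\Gamma,-}D_W^{\Gamma,+})}\big|_{L^2_d}\big) - \Tr\big(e^{-t(D_W^{\Gamma,+}D_W^{\Gamma,-})}\big|_{L^2_d}\big).
\]
Finally, one invokes Moscovici's $L^2$-index theorem \cite{Moscovici82}: the continuous spectrum contributes nothing to the supertrace (the relevant generalized eigensections are not $L^2$, or their contributions cancel by a scattering-matrix argument as in \cite{BM83}), so this supertrace is $t$-independent and equals $\ind(D_W^\Gamma)$, the Moscovici index of $D_W^\Gamma$. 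Tracing the identifications back through $\Psi_*$ and $(R_d^\Gamma)^V_*$ gives \eqref{eq higher lower}.

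The main obstacle is the interaction with the continuous spectrum. On a compact quotient the McKean--Singer argument is immediate, but here $e^{-t(D_W^\Gamma)^2}$ is \emph{not} trace-class on all of $L^2(\Gamma\backslash G)$, which is why we restrict to $L^2_d$ in \eqref{eq iso L1}. Justifying that the restriction to $L^2_d$ already computes the full Moscovici index — i.e. that the heat supertrace on $L^2_d$ is both $t$-independent and equal to the analytic index of $D_W^\Gamma$ — is exactly the hard analytic content, and is where one must lean on the real-rank-one hypothesis and on the explicit spectral decomposition underlying \cite{BM83, OW78, Warner79, Moscovici82}; in particular the continuous part's would-be contribution is controlled by the intertwining operators $c_\Gamma(\sigma)$ appearing in $\tau_{\res}$. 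I expect the cleanest route is to cite \cite{Moscovici82} for the identification of $\ind(D_W^\Gamma)$ with the $L^2_d$-supertrace of the heat operator, and to spend the real work on verifying that $\Psi$ and $(R_d^\Gamma)^V$ intertwine the algebraic heat idempotent of \eqref{eq index idempotent} with the geometric heat operator on the quotient.
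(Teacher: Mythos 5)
Your proposal is correct and follows essentially the same route as the paper's proof: represent $\ind_{S,G}(D_W)$ by the heat idempotent of Theorem~\ref{thm def index}, note that the isomorphism \eqref{eq iso L1} is given by the operator trace so the left-hand side reduces to the supertrace of the heat operators on $L^2_d(\Gamma\backslash G)$, and cite Proposition~3.1 of \cite{BM83} for the identification of that supertrace with $\ind(D_W^{\Gamma})$. The analytic issues you flag concerning the continuous spectrum are precisely the content of that cited proposition (resting on \cite{Moscovici82}), so the paper's argument is a condensed version of yours rather than a different one.
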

\begin{proof}
The isomorphism \eqref{eq iso L1} is given by the operator trace. So
by Theorem \ref{thm def index}, the left hand side of \eqref{eq higher lower} equals
\[
\Tr\bigl( R^{\Gamma}_d(\Psi(e^{-t D_W^-D_W^+}) ) \bigr) - \Tr\bigl( R^{\Gamma}_d(\Psi(e^{-t D_W^+D_W^-}) ) \bigr).
\]
By Proposition 3.1 in \cite{BM83}, this equals the right hand side of \eqref{eq higher lower}.
\end{proof}

\begin{remark}
The number \eqref{eq higher lower} is the value of the continuous trace $\Tr \circ (R_d^{\Gamma})^V \circ \Psi$ on $\ind_{S, G}(D_W)$.
\end{remark}

\begin{remark}
By (1.2.5) in \cite{BM83}, the index \eqref{eq higher lower} vanishes if $\rank(G) > \rank(K)$. The higher index \eqref{eq ind G DW} will generally be nonzero, however. Indeed, Corollary 4.4 in \cite{GHW25a} states that the higher index \eqref{eq ind G DW}  maps to the standard $G$-index $\ind_G(D_W) \in K_0(C^*_r(G))$ under the maps induced on $K$-theory by $\Psi$ and the inclusion $\calS(G) \hookrightarrow C^*_r(G)$.
Bradd \cite{Bradd23, BH21} showed that the inclusion $\calS(G) \hookrightarrow C^*_r(G)$ induces an isomorphism on $K$-theory, if one restricts to isotypical components corresponding to certain finite sets of $K$-types. 
Because $\ind_G(D_W)$ is nonzero by the Connes--Kasparov conjecture, so is 
 the higher index \eqref{eq ind G DW}. 
%
%
%
\end{remark}


If $X$ is \emph{compact} and $\Gamma$ is torsion-free, then the algebra $\cA_{S, q}(E)^{\Gamma}$ is continuously included in the equivariant Roe algebra $C^*(G/K)^{\Gamma}$. Then
\[
\ind_{S, \Gamma}(D_W) \in K_0(\cA_{S, q}(E)^{\Gamma})
\]
is mapped to the usual $\Gamma$-index
\[
\ind_{\Gamma}(D_W) \in K_0(C^*_r(\Gamma)) = K_0(C^*(G/K)^{\Gamma})
\]
by the map induced by the inclusion. 
See Remark 2.16 in \cite{GHW25a} and \cite{Roe02}.

\subsection{Index theorems:  semisimple and non-semisimple cases}\label{sec prelim ind thm}

Next, we use the algebra isomorphism from Lemma \ref{lem AEG} to apply the traces from Subsections \ref{sec prelim traces} and \ref{sec spec tr} to the higher index \eqref{eq ind G DW}, and obtain a higher version of the index theorem in \cite{BM83}. For now  we drop the assumption that $G$ has real rank $1$; this will be assumed again in the index theorem for non-semisimple contributions, Theorem \ref{thm index non-ss}.

The index theorems are stated for
 Dirac operators  \eqref{eq DW} on $G/K$.
Consider the traces $\tau_{\gamma}^{\Delta_{\kp} \otimes W}$ from Proposition \ref{prop tr SG}, for semisimple $\gamma \in \Gamma$,  $(\tau_{n_0} + \tau_{n_0^{-1}})^{\Delta_{\kp} \otimes W}$ from Lemma \ref{lem T2lam}, $\tau_{\lambda}^{\Delta_{\kp} \otimes W}$ from Proposition \ref{prop Tlam} and $T_{\lambda}^{\Delta_{\kp} \otimes W}$ from Proposition \ref{prop trace T} on the algebra $(\calS(G) \otimes \End(\Delta_{\kp} \otimes W))^{K \times K}$. Via the isomorphism $\Psi$ from Lemma \ref{lem AEG}, these define continuous traces on $\cA_{S, q}(E)^G$. These can be applied to \eqref{eq ind G DW}. In fact, higher cyclic cocycles $\Phi_{P, \gamma}^{\Delta_{\kp} \otimes W}$ as in Proposition \ref{prop cocycle SG}, for $\gamma$ semisimple, can also be applied to the index \eqref{eq ind G DW} and computed, yielding possibly nonzero results in cases where the trace $\tau_{\gamma}^{\Delta_{\kp} \otimes W}$ maps \eqref{eq ind G DW}  to zero.

 To state index theorems for the resulting numbers, we introduce some notation in the various cases.

\subsubsection*{Index theorem for semisimple contributions}

Let $T<K$ be a maximal torus. 
If $T$ is a Cartan subgroup of $G$, so $\rank(G) = \rank(K)$, then we  use the following notation.  
Fix a positive root system $R^+(G,T)$ for $(G, T)$, and consider the Weyl group $W_K = N_K(T)/T$. Let $\rho$ be half the sum of the roots in $R^+(G, T)$, 
 let $\rho_c$ be half the sum of the  roots in $R^+(K, T) = R^+(G, T) \cap R(K,T)$, and let $\rho_n = \rho-\rho_c$.
 
 If $g \in T$, then we write    $R^+(G_g,T) =  R^+(G,T) \cap R(G_g, T)$, and $W_{K_{g}} = N_{K_g}(T)/T$. Let $\rho_g$ be half the sum of the roots in $R^+(G_g, T)$.
 
 Let $\lambda$ be the highest weight of $W$ with respect to the set of compact positive roots $R^+(K, T) \subset R^+(G, T)$. For $\mu \in i\kt^*$, we write
\beq{eq def dG}
d^{G_g}_{\mu} = \prod_{\alpha \in R^+(G_g, T)}\frac{(\mu, \alpha)}{(\rho_g, \alpha)}. 
\eeq
(This includes the special case where $g=e$, so $G_g = G$.)
For a discrete series representation of $G_g$ with Harish-Chandra parameter $\mu$, this is its formal degree.

\begin{theorem}[Index theorem, semisimple case]\label{thm index ss}
Suppose  that $\rank(G) = \rank(K)$.
\begin{enumerate}
\item[(a)] If $\gamma \in \Gamma$ is central in $G$, then
\[
\tau_{\gamma}^{\Delta_{\kp} \otimes W} \circ \Psi_*(\ind_{S, G}(D_W) ) = e^{\lambda - \rho_n}(\gamma)d^G_{\lambda+\rho_c}.
\]
\item[(b)] If $\gamma \in \Gamma$ is hyperbolic, then
\[
\tau_{\gamma}^{\Delta_{\kp} \otimes W}\circ \Psi_*(\ind_{S, G}(D_W) ) = 0.
\]
\item[(c)] If $\gamma \in \Gamma$ is elliptic, then with respect to a compact Cartan subgroup $T$ containing $\gamma$, 
\begin{multline*}
\tau_{\gamma}^{\Delta_{\kp} \otimes W}\circ \Psi_*(\ind_{S, G}(D_W) ) = \\
(-1)^{(\dim(G/K) + \dim(G_{\gamma}/K_{\gamma})/2} 
\frac{\sum_{w \in W_{K_{\gamma}} \backslash W_K}  \det(w) e^{w(\lambda + \rho_c - \rho_{\gamma})}(\gamma) d^{G_{\gamma}}_{w(\lambda+ \rho_{\gamma}-\rho_n)}}{e^{\rho- \rho_{\gamma}}(\gamma)\prod_{\alpha \in R^+(G, T) \setminus R^+(G_{\gamma}, T)} (1-e^{\alpha}({\gamma})) }.
\end{multline*}
\end{enumerate}
\end{theorem}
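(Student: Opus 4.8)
The plan is to mimic the proof of Corollary~\ref{cor DGamma}: by Theorem~\ref{thm def index} the class $\Psi_*(\ind_{S,G}(D_W))$ is represented by the standard heat idempotent, so applying the trace $\tau_\gamma^{\Delta_\kp\otimes W}\circ\Psi_*$ to it yields the $t$-independent McKean--Singer supertrace
\[
\tau_\gamma^{\Delta_\kp\otimes W}\bigl(\Psi(e^{-tD_W^-D_W^+})\bigr)-\tau_\gamma^{\Delta_\kp\otimes W}\bigl(\Psi(e^{-tD_W^+D_W^-})\bigr).
\]
The operators $\Psi(e^{-tD_W^\mp D_W^\pm})$ involve only finitely many $K$-types and, by Lemma~\ref{lem incl SC}, lie in Harish-Chandra's Schwartz algebra $\cC(G)$, on which $\tau_\gamma$ is a continuous trace (this is part of the proof of Proposition~\ref{prop tr SG}). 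Hence the quantity above is precisely $\tau_\gamma$ evaluated on the heat-operator representative of the classical $G$-index $\ind_G(D_W)$ --- the class that $\Psi_*(\ind_{S,G}(D_W))$ maps to under $\calS(G)\hookrightarrow C^*_r(G)$ by Corollary~4.4 in \cite{GHW25a}. So the theorem reduces to computing this classical orbital-integral index.

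For this I would use the realisation of the discrete series. Since $\rank(G)=\rank(K)$, Parthasarathy's construction and the Atiyah--Schmid theorem \cite{Atiyah77, Parthasarathy72} identify $\ind_G(D_W)$ with the discrete series representation $\pi_{\lambda+\rho_c}$ of Harish-Chandra parameter $\lambda+\rho_c$, realised on the $L^2$-harmonic spinors in $\Gamma^\infty(E)$; letting $t\to\infty$ above (legitimate by trace-norm bounds on the heat operators) shows that the number we want is the $\gamma$-orbital integral of the reproducing kernel of $\pi_{\lambda+\rho_c}$, a purely representation-theoretic quantity.
\begin{itemize}
\item If $\gamma$ is central the orbit is a point, so the orbital integral is evaluation at $\gamma$, equal to the central character $e^{\lambda-\rho_n}(\gamma)$ of $\pi_{\lambda+\rho_c}$ times the $L^2$-index, which is the formal degree $d^G_{\lambda+\rho_c}$ (the case $\gamma=e$ is Atiyah--Schmid / Connes--Moscovici \cite{CM90}).
\item If $\gamma$ is hyperbolic its orbit meets only a noncompact Cartan subgroup, over which discrete series orbital integrals vanish by Harish-Chandra, giving $0$.
\item If $\gamma$ is elliptic, one inserts Harish-Chandra's explicit formula for the discrete series character on the compact Cartan subgroup $T\ni\gamma$ and uses the descent of the orbital integral on $G$ to $G_\gamma$: the result reorganises as the alternating sum over $W_{K_\gamma}\backslash W_K$ of the formal degrees $d^{G_\gamma}_{w(\lambda+\rho_\gamma-\rho_n)}$ of discrete series of $G_\gamma$, divided by $e^{\rho-\rho_\gamma}(\gamma)\prod_{\alpha\in R^+(G,T)\setminus R^+(G_\gamma,T)}(1-e^\alpha(\gamma))$, with the overall sign $(-1)^{(\dim(G/K)+\dim(G_\gamma/K_\gamma))/2}$ coming from the orientation/half-dimension bookkeeping relating the local index density on $G/K$ to that on the fixed submanifold $G_\gamma/K_\gamma$. (Equivalently, this is the geometric fixed-point formula for $\tau_\gamma(\ind_G D_W)$ localised at $G_\gamma/K_\gamma$.)
\end{itemize}
As a consistency check, specialising (c) to central $\gamma$ --- where $G_\gamma=G$, the product over roots is empty, and $W_{K_\gamma}\backslash W_K$ is a point --- recovers (a).

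I expect the main obstacle to be twofold. First, in the elliptic case one must carry out carefully the Weyl-denominator and root-system bookkeeping that turns Harish-Chandra's character formula (or the fixed-point integral over $G_\gamma/K_\gamma$) into exactly the displayed expression, including the correct sign and the identification of $W_{K_\gamma}\backslash W_K$ as the summation index set. Second, because the heat operators are not compactly supported one cannot work inside $C^\infty_c(G)$: the argument must invoke the continuity of $\tau_\gamma$ on the Schwartz algebra (so that the $t$-independence of the supertrace and the $t\to\infty$ limit are valid and commute with $\tau_\gamma$), together with $K$-theory functoriality along $\calS(G)\hookrightarrow\cC(G)\hookrightarrow C^*_r(G)$, to reduce rigorously to the classical discrete-series computation.
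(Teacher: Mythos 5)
Your first reduction coincides with the paper's: Theorem \ref{thm def index} turns $\tau_\gamma^{\Delta_{\kp}\otimes W}\circ\Psi_*(\ind_{S,G}(D_W))$ into the heat supertrace, and this is identified with $\tau_\gamma(\ind_G(D_W))$ (the paper cites Proposition 3.6 in \cite{HW2} for exactly this step). From there, however, you and the paper diverge. You propose to compute $\tau_\gamma(\ind_G(D_W))$ by representation theory: realise $\ind_G(D_W)$ as the discrete series $\pi_{\lambda+\rho_c}$, take $t\to\infty$ to reduce to the orbital integral of its reproducing kernel, and then invoke Harish-Chandra's character formula on the compact Cartan together with vanishing of discrete-series orbital integrals at hyperbolic elements. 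This is precisely the route of Section 4.1 of \cite{BM83}, which the paper explicitly acknowledges as an option but deliberately does not take; instead it proves Theorem \ref{thm ss contr} by a holomorphic Lefschetz fixed-point formula on $G/T$ (Theorem \ref{thm fixed Dolb}), decomposing $(G/T)^\gamma$ as $\coprod_{w\in W_{K_\gamma}\backslash W_K}G_\gamma w/T$, trivialising the normal bundle, and evaluating the local contributions as von Neumann traces of Dolbeault indices on $G_\gamma/T$. The paper's route buys an argument that is independent of Harish-Chandra's character formula and that generalises to the higher orbital integrals of Theorem \ref{thm index ss higher}; your route is shorter if one is willing to import the "nontrivial results from representation theory" wholesale from \cite{BM83}.

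Two points in your outline need more care if you pursue it. First, when $\lambda+\rho_c$ is singular, $\ind_G(D_W)$ is not a discrete series with a reproducing kernel (the $L^2$-kernel of $D_W$ is trivial by Atiyah--Schmid), so the "$t\to\infty$, orbital integral of the projection" step degenerates; you must argue separately that both sides vanish, as the paper does in Lemma \ref{lem ds Gg} via $d^{G_\gamma}_{w(\lambda+\rho_\gamma-\rho_n)}=0$. Second, the elliptic case is not mere bookkeeping: passing from Harish-Chandra's character on $T$ to the stated sum of formal degrees $d^{G_\gamma}_{w(\lambda+\rho_\gamma-\rho_n)}$ over $W_{K_\gamma}\backslash W_K$, with the correct sign $(-1)^{(\dim(G/K)+\dim(G_\gamma/K_\gamma))/2}$ and Weyl-denominator factor, is exactly the substantive content of \cite{BM83}, Section 4.1, and would have to be reproduced or cited in full.
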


\subsubsection*{Index theorem for semisimple contributions from higher cocycles}

As in  Proposition \ref{prop cocycle SG}, consider a cuspidal parabolic subgroup $P = MA'N'<G$, and a semisimple element $g \in M$.
If $P$ is a maximal cuspidal parabolic subgroup, then $T<M$,  after conjugation if necessary. Then $T$ is a compact Cartan subgroup of $M$.
 Fix a positive root system $R^+(M, T) \subset R(M, T)$.
If $g \in T$, we write
\[
R^+(M_g, T) := R(M_g,  T) \cap R^+(M, T).
\]
Let $\rho^M$ be half the sum of the elements of $R^+(M, T)$,  let $\rho^M_c$ be half the sum of the elements of $R^+(K \cap M, T) := R^+(M, T) \cap R(K \cap M, T)$, and let $\rho^M_n = \rho^M - \rho^M_c$. Let $\rho^M_g$ be half the sum of the roots in $R^+(M_g, T)$.

As in \cite{HST20}, we now assume that the map \eqref{eq tilde Ad}
 descends to $K$. By Lemma 4.3 in \cite{HST20}, there is a natural $K \cap M$-equivariant embedding of $\kk/(\kk \cap \km)$ into $\kp$, via an identification of  $\kk/(\kk \cap \km)$ with the orthogonal complement of $(\kp\cap \km) \oplus \ka$. Hence we obtain the representation $K\cap M \to \Spin(\kk/(\kk \cap \km)) \to \GL(\Delta_{\kk/(\kk \cap \km)})$. The spaces $\kk/\kt$ and $(\kk\cap \km)/\kt$ are both even-dimensional, hence so is $\kk/(\kk \cap \km)$. So the representation $\Delta_{\kk/(\kk \cap \km)}$ of $K \cap M$ has a natural $\Z/2\Z$ grading $\Delta_{\kk/(\kk \cap \km)} = \Delta_{\kk/(\kk \cap \km)}^+  \oplus \Delta_{\kk/(\kk \cap \km)}^-$. If we view  $\Delta_{\kk/(\kk \cap \km)}$ as the virtual representation $[\Delta_{\kk/(\kk \cap \km)}^+]  - [\Delta_{\kk/(\kk \cap \km)}^-] \in R(K \cap M)$, then some of the multiplicities $m_U \in \Z$ in
\beq{eq decomp Delta}
\Delta_{\kk/(\kk \cap \km) } \otimes W|_{K \cap M} = \bigoplus_{U \in \widehat{K \cap M}} m_U U,
\eeq
may be negative. 

For each $U \in \widehat{K \cap M}$, let $\lambda_U \in i(\kt \cap \km)^*$ be its highest weight with respect to the positive root system $R^+(K \cap M, T)$. Consider the Weyl groups $W_{K \cap M} = N_{K \cap M}(T)/T$ and $W_{K \cap M_g} = N_{K \cap M_g}(T)/T$. 
For $\mu \in i(\kt \cap \km)^*$, define $d^{M_g}_{\mu}$ analogously to \eqref{eq def dG}, where the product now runs over $\alpha \in R^+(M_g, T)$.
For semisimple $g \in M$,  let $\Phi^{\Delta_{\kp} \otimes W}_{P, g}$ be as in Proposition \ref{prop cocycle SG}. 
\begin{theorem}[Index theorem, semisimple case for higher cocycles]\label{thm index ss higher}
If $P$ is not a maximal cuspidal parabolic subgroup of $G$, then 
\[
\langle \Phi^{\Delta_{\kp} \otimes W}_{P, \gamma},
 \Psi_*(\ind_{S, G}(D_W) ) \rangle = 0
 \]
  for all semisimple $\gamma \in \Gamma \cap M$.
Suppose now that $P$ is  a maximal cuspidal parabolic subgroup of $G$.
\begin{enumerate}
\item[(a)] If $\gamma \in \Gamma \cap M$ is central in $M$, then
\[
\langle \Phi^{\Delta_{\kp} \otimes W}_{P, \gamma},
 \Psi_*(\ind_{S, G}(D_W) ) \rangle = 
 \sum_{U \in \widehat{K \cap M}} m_U e^{\lambda_U - \rho^M_n}(\gamma)
d^{M}_{\lambda_U+ \rho^M_c}.
\]
\item[(b)] If $\gamma \in \Gamma \cap M$ is hyperbolic in $M$, then
\[
\langle \Phi^{\Delta_{\kp} \otimes W}_{P, \gamma},
 \Psi_*(\ind_{S, G}(D_W) ) \rangle = 0.
\]
\item[(c)] If $\gamma \in \Gamma \cap M$ is elliptic in $M$, then 
with respect to a compact Cartan subgroup $T<M$ containing $\gamma$, 
\begin{multline*}
\langle \Phi^{\Delta_{\kp} \otimes W}_{P, \gamma},
 \Psi_*(\ind_{S, G}(D_W) ) \rangle = \\
 (-1)^{(\dim(M/(K \cap M)) + \dim(M_{\gamma}/(K \cap M_{\gamma}))/2} \cdot \\
\sum_{U \in \widehat{K \cap M}} m_U
\frac{\sum_{w \in W_{K \cap M_{\gamma}} \backslash W_{K \cap M}}  \det(w) e^{w(\lambda_U + \rho^M_c - \rho^M_{\gamma})}(\gamma) d^{M_{\gamma}}_{w(\lambda_U+ \rho^M_{\gamma} -\rho^M_n)}}{e^{\rho^M- \rho^M_{\gamma}}(\gamma)\prod_{\alpha \in R^+(M, T) \setminus R^+(M_{\gamma}, T)} (1-e^{\alpha}(\gamma)) }.
%
\end{multline*}
\end{enumerate}
\end{theorem}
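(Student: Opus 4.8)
The plan is to reduce the higher-cocycle index theorem, Theorem~\ref{thm index ss higher}, to the already-established semisimple index theorem, Theorem~\ref{thm index ss}, applied to the group $M$ rather than $G$. The key observation is that, by Theorem~\ref{thm def index}, $\Psi_*(\ind_{S,G}(D_W))$ is represented by the standard heat-operator idempotent, so pairing it with the cyclic cocycle $\Phi^{\Delta_{\kp}\otimes W}_{P,\gamma}$ amounts to evaluating Song and Tang's higher orbital integral on the heat kernels $\Psi(e^{-tD_W^-D_W^+})$ and $\Psi(e^{-tD_W^+D_W^-})$. First I would recall from \cite{ST19} the structure of $\Phi_{P,g}$: it is built from the constant term (along $P$) of a Harish-Chandra Schwartz function, integrated against the orbital integral $\tau_g$ over $M$ in the variable from $M$ and against polynomial-type data in the $A'$-variable; the degree equals $\dim A'$. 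When this cocycle is paired with the idempotent, the van Est/constant-term map and the explicit form of the pairing (Definition~3.3 and Theorem~3.5 in \cite{ST19}, together with their computation for Dirac operators, cf.\ \cite{HST20}) turn the computation into: (i) a constant-term computation that extracts the part of the heat kernel relevant to the $M A'N'$ geometry, and (ii) an application of the $M$-orbital integral $\tau_\gamma$ to the resulting kernel on $M$.

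Concretely, I would proceed as follows. Step one: identify $\Psi(e^{-tD_W^2})$ as the convolution kernel of $e^{-tD_W^2}$ on $G$, so that the pairing with $\Phi^{\Delta_{\kp}\otimes W}_{P,\gamma}$ is, up to normalization, $\Phi^{\Delta_{\kp}\otimes W}_{P,\gamma}$ evaluated on $\Psi(e^{-tD_W^-D_W^+}) - \Psi(e^{-tD_W^+D_W^-})$. Step two: invoke the compatibility (from \cite{ST19, HST20}) between the higher orbital integral $\Phi_{P,\gamma}$ and the Dirac operator, which expresses this pairing as a semisimple orbital integral over $M_\gamma\backslash M$ of a heat-type kernel for the \emph{Dirac operator on $M/(K\cap M)$} twisted by $\Delta_{\kk/(\kk\cap\km)}\otimes W$. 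This is precisely where the embedding $\kk/(\kk\cap\km)\hookrightarrow\kp$ and the decomposition \eqref{eq decomp Delta} enter: the restriction of $D_W$'s heat kernel, after taking the constant term, decomposes according to the $K\cap M$-types $U$ appearing in $\Delta_{\kk/(\kk\cap\km)}\otimes W$, each contributing a copy of the Dirac operator $D_U$ on $M/(K\cap M)$ with coefficients in $U$, with multiplicity $m_U$. Step three: for $P$ not maximal, $\dim A' \geq 2$ and $M$ has disconnected (or higher-rank-mismatch) Cartan structure so that $\rank(M)>\rank(K\cap M)$; then each $M$-index $\tau_\gamma$-pairing vanishes by part (1.2.5) of \cite{BM83} applied to $M$, exactly as in Theorem~\ref{thm index ss}, giving the asserted vanishing. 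Step four: for $P$ maximal, $T<M$ is a compact Cartan subgroup, so $\rank(M)=\rank(K\cap M)$, and Theorem~\ref{thm index ss} applies verbatim with $G$ replaced by $M$, $W$ replaced by each $U$, and the formal-degree notation $d^{G_g}_\mu$ replaced by $d^{M_g}_\mu$; summing over $U$ with multiplicities $m_U$ yields formulas (a), (b), (c).

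For the main computation I would therefore only need to: (1) carefully match the normalizations in \eqref{eq def rho}, \eqref{eq Haar KAN} and the Langlands measure decomposition $dg = a^{2\rho^M_N}\,dm\,da\,dn$ for $P$, so that the $A'$-integral in $\Phi_{P,\gamma}$ produces exactly the right constant; and (2) check that the Clifford-algebra bookkeeping $\Delta_{\kp} \cong \Delta_{\kp\cap\km}\otimes\Delta_{\ka}\otimes\Delta_{\kk/(\kk\cap\km)}$ (using $\kp = (\kp\cap\km)\oplus\ka\oplus(\text{image of }\kk/(\kk\cap\km))$) is respected by the constant-term map, so that after extracting the $\ka$-direction one is left exactly with the $M$-Dirac operator twisted by $\Delta_{\kk/(\kk\cap\km)}\otimes W$. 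The sign $(-1)^{(\dim(M/(K\cap M)) + \dim(M_\gamma/(K\cap M_\gamma)))/2}$ then comes directly from the sign in Theorem~\ref{thm index ss}(c) applied to $M$.

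The hard part will be Step two: establishing rigorously that pairing the heat-operator idempotent for $D_W$ on $G/K$ with Song--Tang's cyclic cocycle $\Phi_{P,\gamma}$ reduces to the ordinary (semisimple) orbital-integral pairing for the Dirac operator on $M/(K\cap M)$. This requires both the analytic input — that the constant-term map along $P$ is compatible with the heat semigroup and with taking the idempotent, so that no extra boundary contributions appear (this is where Casselman's Schwartz algebra, rather than $C^\infty_c(G)$ or Harish-Chandra's spaces, is essential, cf.\ Remark~\ref{rem why SG}) — and the algebraic input identifying the constant term of the $D_W$-heat kernel with an $M$-heat kernel via \eqref{eq decomp Delta}. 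Once this reduction is in place, the rest is a mechanical transcription of Theorem~\ref{thm index ss} with $G \rightsquigarrow M$ and summation over the $K\cap M$-types $U$.
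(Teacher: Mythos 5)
Your overall route is the same as the paper's: reduce the pairing of $\Phi^{\Delta_{\kp}\otimes W}_{P,\gamma}$ with the heat-kernel idempotent to the pairing $\langle\Phi_{P,\gamma},\ind_G(D_W)\rangle$ with the standard index in $K_0(C^*_r(G))$, use the compatibility results of \cite{HST20} (their Theorem 2.1 and equations (5.4)--(5.5)) to rewrite this as $\tau^M_\gamma$ applied to the indices of the Dirac operators $D^M_{\Delta^{\pm}_{\kk/(\kk\cap\km)}\otimes W|_{K\cap M}}$ on $M/(K\cap M)$, and then apply the semisimple fixed-point computation (Theorem \ref{thm ss contr}) with $G$ replaced by $M$ and the decomposition \eqref{eq decomp Delta} supplying the multiplicities $m_U$. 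Your Steps one, two and four, and the Clifford bookkeeping $\Delta_{\kp}\cong\Delta_{\kp\cap\km}\otimes\Delta_{\ka}\otimes\Delta_{\kk/(\kk\cap\km)}$, are exactly what underlies the cited identities in \cite{HST20}; the paper does not re-derive them but imports them wholesale, so your "hard part" is already available in the literature.

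There is, however, a genuine error in your Step three. You justify the vanishing for non-maximal $P$ by asserting that $\dim A'\geq 2$ and that $\rank(M)>\rank(K\cap M)$, so that the $M$-contributions vanish by (1.2.5) of \cite{BM83} applied to $M$. Both claims are false: by definition a \emph{cuspidal} parabolic subgroup $P=MA'N'$ has the property that $M$ admits a compact Cartan subgroup, so $\rank(M)=\rank(K\cap M)$ for \emph{every} cuspidal $P$, maximal or not (and a non-maximal cuspidal parabolic can perfectly well have $\dim A'=1$, e.g.\ when $G$ itself is cuspidal and $A'$ is one-dimensional). The vanishing for non-maximal $P$ therefore cannot come from an equal-rank obstruction on $M$; it comes from the fact that the class $\ind_G(D_W)$ is supported on the component of $C^*_r(G)$ (in the sense of the Plancherel/Connes--Kasparov decomposition) associated with the maximal cuspidal parabolic, against which the cocycles $\Phi_{P,g}$ for smaller cuspidal $P$ pair trivially. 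This is precisely the first conclusion of Theorem 2.1 in \cite{HST20}, which the paper cites; you should replace your rank argument with that citation (or with a proof of that statement), since as written your derivation of the first assertion of the theorem would fail.
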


\begin{remark}
If $\rank(G)=\rank(K)$, then $G$ is a maximal cuspidal parabolic subgroup of itself, and $\Phi_{G, g} = \tau_g$ for all semisimple $g \in G$. Hence Theorem \ref{thm index ss higher} reduces to Theorem \ref{thm index ss}  in this case.
\end{remark}

\subsubsection*{Index theorem for non-semisimple contributions}

To state an index theorem for traces not associated to semisimple elements of $\Gamma$, we introduce some further notation. We now assume that $G$ has real rank $1$.

Let $l$ be the number of $\Gamma$-conjugacy classes of $\Gamma$-cuspidal parabolic subgroups of $G$. Choose representatives $P_j = M_j A_j N_j$ of these conjugacy classes. If $G = \SU(2n, 1)$, then we write
\[
C_2(\Gamma) := \sum_{j=1}^{l} \frac{\vol(\Gamma \cap N_j \backslash N_j)}{\vol(\Gamma \cap (N_j)_{2\lambda} \backslash (N_j)_{2\lambda})} \frac{(2\pi)^{2n}}{(2n)!}B_n,
\]
where $B_n$ is the $n$th Bernoulli number. Furthermore, let $\kt \subset \ksu(2n,1)$ be the diagonal Cartan. Let  $Z_0 \in \kt$ be the diagonal matrix with diagonal elements $i, i, \ldots, i, -2ni$. Let
\[
\varepsilon(R^+(G, T)) = \sign \prod_{\alpha \in R^+(G, T) \setminus R^+(K, T)} \langle \alpha, Z_0\rangle.
\]

Let $\Omega \in \cU(\kg_{\C})$ be the Casimir element. For $\sigma \in \hat M$, consider the operators
\[
c_{\lambda}(\sigma) := - \Ind_P^G(\sigma \otimes 1_{AN}) (\Omega)+ \|\lambda + \rho_c\|^2 - \|\rho\|^2
\]
on the representation space $\calH(\sigma)$ of $\Ind_P^G(\sigma \otimes 1_{AN})$. Let
$
c^+_{\Gamma, \lambda}(\sigma) 
$
be the restriction of $c_{\Gamma}(\sigma) \otimes 1_{\Delta_{\kp}^+ \otimes W}$ to $K$-invariant vectors. (Here $c_{\Gamma}(\sigma)$ is as in Lemma \ref{lem tau res}.)

For general $G$, if $R^+_0(G, T)$ is a positive root system chosen as in Section 2.a of \cite{Arthur74}, then we write
\[
\varepsilon(\lambda + \rho_c) := \sign \prod_{\alpha \in R^+_0(G, T)} (\lambda + \rho_c, \alpha).
\]
If $N_{2\lambda}$ is trivial, then we choose a vector $\tilde X \in \kn_{\lambda}$ of norm $\frac{\sqrt{2}}{\|\lambda\|}$. If  $N_{2\lambda}$ is nontrivial, then we choose a vector $\tilde X \in \kn_{2\lambda}$ of norm $\frac{1}{\sqrt{2}\|\lambda\|}$. Write $\tilde Y := \theta(\tilde X)$, where $\theta$ is the Cartan involution. For integral $\mu \in i\kt^*$, define $k(\mu) \in \Z$ by the relation
\[
e^{\langle \mu, t(\tilde X - \tilde Y)\rangle} = e^{ik(\mu)t}
\]
for all $t \in \R$. Recall the choice of $n_0 \in N_{2\lambda}$ above \eqref{eq def T}. Consider the Weyl group $W_M = N_M(T \cap M)/(T\cap M)$.  
\begin{theorem}[Index theorem, non-semisimple case]\label{thm index non-ss}
Suppose that  $G$ has real rank $1$ and that $\rank(G) = \rank(K)$.
\begin{enumerate}
\item[(a)] If $G \not= \SU(2n,1)$, then
\[
(\tau_{n_0} + \tau_{n_0^{-1}})^{\Delta_{\kp} \otimes W}\circ \Psi_*(\ind_{S, G}(D_W) ) = 0.
\]
If $G = \SU(2n,1)$, then
\begin{multline}\label{eq ind thm taun}
(\tau_{n_0} + \tau_{n_0^{-1}})^{\Delta_{\kp} \otimes W}\circ \Psi_*(\ind_{S, G}(D_W) ) = \\
\|\lambda\|
\frac{2^{3n-3}(2n+1)^n}{\area(S^{4n-1})} C_2(\Gamma) \varepsilon(R^+(G, T)) \dim(W).
\end{multline}
\item[(b)] We have
\[
\tau_{\lambda}^{\Delta_{\kp} \otimes W}\circ \Psi_*(\ind_{S, G}(D_W) ) = 0.
\]
\item[(c)]
If $\lambda + \rho_c$ is regular, then
\[
\tau_{\res} \circ \Psi_*(\ind_{S, G}(D_W) ) = 0.
\]
If $\lambda + \rho_c$ is singular, then
\[
\tau_{\res}\circ \Psi_*(\ind_{S, G}(D_W) ) = 2\sum_{\sigma \in \hat M; c_{\lambda}(\sigma)=0} \Tr(c_{\Gamma, \lambda}^+(\sigma)).
\]
\item[(d)]
If $\lambda + \rho_c$ is regular, then
\begin{multline} \label{eq ind thm T}
\tau_{\rem}^{\Delta_{\kp} \otimes W}\circ \Psi_*(\ind_{S, G}(D_W) ) = \frac{(-1)^{\dim(G/K)/2}}{2} l \varepsilon(\lambda + \rho_c)  \\
\sum_{w \in W_{M} \setminus W_K}\det(w) \sgn( k(w(\lambda+\rho_c))) 
\dim(\sigma_{w_M w (\lambda + \rho_c)|_{\kt \cap \km}-\rho^M}),
\end{multline}
where (for every $w \in W_K$),   an element $w_M \in W_M$ is chosen such that $w_M w (\lambda + \rho_c)|_{\kt \cap \km}$ is strictly dominant for $M$, and $\sigma_{w_M w (\lambda + \rho_c)|_{\kt \cap \km}-\rho^M} \in \hat M$ has highest weight $w_M w (\lambda - \rho_c)|_{\kt \cap \km}-\rho^M$. 

If $G/K$ is real hyperbolic space of dimension at least $4$, then 
\[
\tau_{\rem}^{\Delta_{\kp} \otimes W}\circ \Psi_*(\ind_{S, G}(D_W) ) = 0.
\]
\end{enumerate}
\end{theorem}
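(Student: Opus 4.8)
The plan is to reduce each of the four pairings in (a)--(d) to a heat-kernel supertrace, and then to identify that supertrace with the corresponding term in the Osborne--Warner trace formula as evaluated by Barbasch and Moscovici. First, by Theorem \ref{thm def index} together with the algebra isomorphism $\Psi$ of Lemma \ref{lem AEG}, for any continuous trace $\tau$ on $(\calS(G) \otimes \End(V))^{K \times K}$ and every $t > 0$ one has
\[
\tau \circ \Psi_*(\ind_{S,G}(D_W)) = \tau\bigl(\Psi(e^{-t D_W^- D_W^+})\bigr) - \tau\bigl(\Psi(e^{-t D_W^+ D_W^-})\bigr),
\]
and this number is independent of $t$ (the $\tilde\tau(1)$ terms coming from the unitisation cancel, exactly as in the proof of Corollary \ref{cor DGamma}). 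Here $\Psi(e^{-t D_W^\mp D_W^\pm}) \in (\calS(G) \otimes \End(\Delta_{\kp}^\pm \otimes W))^{K \times K}$ is the lift to $G$ of the heat kernel of the Dirac Laplacian on $E^\pm \to G/K$, and this is precisely the class of functions to which Barbasch and Moscovici apply Selberg's trace formula in the form of \cite{OW78, Warner79}. So I would apply the identity above with $\tau$ each of $(\tau_{n_0} + \tau_{n_0^{-1}})^{\Delta_{\kp} \otimes W}$, $\tau_\lambda^{\Delta_{\kp}\otimes W}$, $\tau_{\res}$ and $\tau_{\rem}^{\Delta_{\kp}\otimes W}$.

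Next, each of these traces was built (see Subsection \ref{sec prelim traces}) from a single term on the geometric or spectral side of that trace formula, so the difference above is exactly the contribution of that term to the Dirac index computed in \cite{BM83}; it then remains to read off each contribution. For (a), the $N_{2\lambda}$-term --- a ``unipotent contribution of the second kind'' --- vanishes by the computation in \cite{BM83} unless $G = \SU(2n,1)$, and for $\SU(2n,1)$ it equals \eqref{eq ind thm taun}, with the constants $C_2(\Gamma)$, $\varepsilon(R^+(G,T))$ and the factor $\dim W$ propagated through the normalisations of $\tau_{n_0}$ and of the heat kernel. For (b), the $N_\lambda$-term contributes $0$ to a Dirac index by \cite{BM83}, giving the stated vanishing. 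For (c), the residual (Eisenstein/continuous-spectrum) contribution at the eigenvalue $0$ of the Dirac Laplacian is governed by the operators $c_\lambda(\sigma)$ and is supported on $\{\sigma \in \hat M : c_\lambda(\sigma) = 0\}$, which is empty when $\lambda + \rho_c$ is regular and otherwise yields $2\sum_{\sigma:\,c_\lambda(\sigma) = 0}\Tr(c_{\Gamma,\lambda}^+(\sigma))$. For (d), since $\tau_{\rem}$ is by \eqref{eq def T} a fixed linear combination of $\Tr(R^{\Gamma}_d(\cdot))$, $\tau_{\res}$ and orbital integrals, $\tau_{\rem}^{\Delta_{\kp}\otimes W}\circ \Psi_*$ applied to the index equals the sum of the remaining Osborne--Warner terms; for $\lambda + \rho_c$ regular, the ``remainder'' theorem of \cite{OW78} (pages 293 and 299) expresses this through characters of discrete series of $M$, and a character-theoretic manipulation (Weyl-denominator bookkeeping, the parity $\sgn k(\cdot)$, and reduction of the Weyl-group sum to $W_M \setminus W_K$) produces \eqref{eq ind thm T}; the vanishing for real hyperbolic $G/K$ of dimension at least $4$ is then read off from this formula, where the relevant Weyl-group sum cancels.

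The conceptual steps above are short, and I expect the main obstacle to be the bookkeeping. One must match, term by term, the numerous normalisation conventions in \cite{OW78, Warner79, BM83} --- Haar measures on $G$, $K$, $A$, $N$, the decomposition \eqref{eq Haar KAN}, the formal-degree normalisation \eqref{eq def dG} --- with the traces used here, and propagate the twist by $W$ (the factor $\dim W$, and the virtual-representation signs from $[\Delta_{\kp}^+] - [\Delta_{\kp}^-]$) through every term. The genuinely delicate point is the dichotomy between $\lambda + \rho_c$ regular and singular in parts (c) and (d): regularity is exactly the condition controlling whether the Dirac Laplacian has $0$ in the residual/continuous spectrum versus also in (limits of) discrete series attached to the cusps, and the singular case requires the finer analysis of the residual spectrum that produces the sum over $\{\sigma : c_\lambda(\sigma) = 0\}$ and the extra factor in (c).
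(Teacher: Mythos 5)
Your proposal is correct and follows essentially the same route as the paper: reduce each trace of the index to the heat-kernel supertrace $\tau^{\Delta_{\kp}\otimes W}(\Psi(e^{-tD_W^-D_W^+})) - \tau^{\Delta_{\kp}\otimes W}(\Psi(e^{-tD_W^+D_W^-}))$ via Theorem \ref{thm def index} and Lemma \ref{lem AEG}, and then read off each contribution from the Barbasch--Moscovici computations of the corresponding Osborne--Warner terms. The only detail worth adding is that for part (d) the paper also incorporates Stern's correction (6.5) in \cite{Stern90} to the term involving $W_\Gamma$ on page 194 of \cite{BM83}, which is what yields the precise form of \eqref{eq ind thm T}.
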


\begin{remark}
The condition that $\lambda + \rho_c$ is regular in part (d) of Theorem \ref{thm index non-ss} may be weakened to the more general but less explicit condition that $c_{\lambda}(\sigma)>0$ for all $\sigma \in \hat M$ such that $[\Delta_{\kp} \otimes W:\sigma^*]>0$.  
 See Proposition  6.5 in \cite{BM83}.
\end{remark}

Theorems \ref{thm index ss}, \ref{thm index ss higher} and \ref{thm index non-ss} are proved in Subsection \ref{sec pf ind thm}. The proof of Theorem \ref{thm index non-ss} is taken from \cite{BM83}. We give an independent, index-theoretic proof of Theorem \ref{thm index ss} in Section \ref{sec ss contr}. In the setting of Theorem \ref{thm index ss higher}, the computations from \cite{BM83} do not apply directly, and we use results from \cite{HST20} combined with arguments from the proof of Theorem \ref{thm index ss}.

\section{Traces on a Schwartz algebra}

We consider the setting of Subsection \ref{sec GK ex}, and initially do not assume that $G$ has real rank $1$.

\subsection{Schwartz spaces}\label{sec Schwartz spaces}

\begin{proof}[Proof of Lemma \ref{lem alg A infty}]
Let $a_0$ and $C$ be as in  \eqref{eq unif exp}, for $X=G$.
Let $f_1, f_2 \in C^{\infty}_c(G)$, $a\geq a_0$ and $P_1, P_2 \in \cU(\kg)$. 
Then, as in the proof of Lemma 2.5 in \cite{GHW25a}, 
\[
\|f_1 * f_2\|_{a, P_1, P_2} = \| L(P_1)f_1 *R(P_2)f_2\|_{a,1} 
\leq 
C  \|f_1 \|_{2a,P_1, 1} \|f_2\|_{a,1, P_2}. 
\]
If $a<a_0$, then
\[
\|f_1 * f_2\|_{a, P_1, P_2} \leq \|f_1 * f_2\|_{a_0, P_1, P_2} \leq  C  \|f_1 \|_{2a_0,P_1, 1} \|f_2\|_{a_0,1, P_2}.
\]
So $\calS(G)$ is closed under convolution, and convolution is a continuous operation on $\calS(G)$. 
\end{proof}

\begin{proof}[Proof of Lemma \ref{lem AEG}]
The following facts can be verified by writing out the various  definitions.
\begin{enumerate}
\item 
For all $\kappa \in \Gamma^{\infty}(E \boxtimes E^*)^G$, the relation \eqref{eq def Psi} indeed yields a well-defined function $\Psi(\kappa) \in (C^{\infty}(G) \otimes \End(V))^{K \times K}$.
\item The map $\Psi$ is linear.
\item  For $f \in (C^{\infty}(G) \otimes \End(V))^{K \times K}$, define $\kappa_{f} \in \Gamma^{\infty}(E \boxtimes E^*)$ by
\beq{eq def kappaf}
\kappa_{f}(gK, g'K)[g,v] = [g, f(g^{-1}g')v],
\eeq
for $g,g' \in G$ and $v \in V$. This yields a well defined section $\kappa_f \in \Gamma^{\infty}(E \boxtimes E^*)^G$.
\item The map $f\mapsto \kappa_f$ from the previous point is a two-sided inverse of $\Psi$.
\item The map $\Psi$ restricts to a bijection from $\Gamma^{\infty}_{\fp}(E \boxtimes E^*)^G$ to $(C^{\infty}_c(G) \otimes \End(V))^{K \times K}$.
\item 
For all $\kappa, \lambda \in  \Gamma^{\infty}_{\fp}(E \boxtimes E^*)^G$,  we have
 $\Psi(\kappa \lambda) = \Psi(\kappa) \Psi(\lambda)$.
 \item
 For all $f \in (C^{\infty}(G) \otimes \End(V))^{K \times K}$, all $P_1, P_2 \in \cU(\kg)$ and all $g,g' \in G$,
 \[
 (R(P_1) \otimes R(P_2) q^*\kappa_f) (g,g') = (L(P_1)R(P_2)f)(g^{-1}g').
 \]
\end{enumerate}
By left $G$-invariance of the Riemannian distance on $G$, the last point implies that for all $a>0$ and $P_1, P_2 \in \cU(\kg)$,
\[
\|\kappa_f\|_{a, R(P_1) \otimes R(P_2) } 
= \|f\|_{a,P_1, P_2}.
\]
So $\Psi$ indeed defines an isometric algebra isomorphism
\[
\Psi\colon \cA_{S, q}(E)^G \xrightarrow{\cong} (\calS(G) \otimes \End(V))^{K \times K}.
\]
\end{proof}

We will use a relation between $\calS(G)$ and Harish-Chandra's $L^p$-Schwartz spaces.
Let $\Xi$ be the matrix coefficient for a unit vector in the representation induced from the trivial representation of a minimal parabolic.
Fix $p>0$.
For $f \in C^{\infty}_c(G)$, $a>0$ and $P_1, P_2 \in \cU(\kg)$, we write 
\[
\|f\|_{\cC^p; a,P_1,P_2} := \sup_{g \in G} \bigl(1+d_{G/K}(eK, gK)\bigr)^{a} \Xi(x)^{-2/p}  |(L(P_1)R(P_2)f)(g)|.
\]
Let $\cC^p(G)$ be the completion of $C^{\infty}_c(G)$ in the seminorms $\| \cdot \|_{\cC^p; a,P,Q}$. 

\begin{remark}
The space $\cC^p(G)$ \label{page Cp} is the $L^p$-version of Harish-Chandra's Schwartz space. See e.g.\ 
 pages 161--162 of \cite{BM83}. For $p=1$, this is reduces to the definition on page 34 of \cite{Warner79}; for equivalence of the seminorms  used, see Proposition 7.15 and (7.49)--(7.51) in \cite{Knapp}. The space $\cC^1_{\varepsilon}(G)$ on page 98 of \cite{Warner79} equals $\cC^{2/(2+\varepsilon)}(G)$.
\end{remark}

\begin{lemma}\label{lem incl SC}
The space $\calS(G)$
is contained in $\cC^p(G)$ for all $p>0$, and the inclusion map is continuous.
\end{lemma}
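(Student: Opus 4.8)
The plan is to show the continuous inclusion $\calS(G) \hookrightarrow \cC^p(G)$ by comparing the defining seminorms of the two spaces on $C^\infty_c(G)$, and then invoking density. Since both spaces are completions of $C^\infty_c(G)$, it suffices to prove that for each $p>0$, each $a>0$ and each $P_1, P_2 \in \cU(\kg)$, there exist $b>0$, $P_1', P_2' \in \cU(\kg)$ (in fact we may take $P_i' = P_i$) and a constant $C>0$ such that
\[
\|f\|_{\cC^p;\, a, P_1, P_2} \leq C\, \|f\|_{b, P_1, P_2}
\]
for all $f \in C^\infty_c(G)$. Because $L(P_1)R(P_2)$ appears on both sides, the problem immediately reduces to the case $P_1 = P_2 = 1$: we must bound $\sup_g (1 + d_{G/K}(eK, gK))^a\, \Xi(g)^{-2/p}\, |h(g)|$ by a multiple of $\sup_g e^{b\, d_G(e,g)} |h(g)|$ for $h \in C^\infty_c(G)$.

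The key analytic input is the standard estimate for Harish-Chandra's $\Xi$-function (see e.g. \cite{Knapp} or \cite{Warner79}): there are constants $c, C_0 > 0$ and $N \in \Z_{\geq 0}$ such that
\[
\Xi(g) \geq C_0\, (1 + d_{G/K}(eK, gK))^{-N}\, e^{-c\, d_{G/K}(eK, gK)}
\]
for all $g \in G$; equivalently $\Xi(g)^{-1} \leq C_0^{-1} (1 + d_{G/K}(eK, gK))^{N} e^{c\, d_{G/K}(eK, gK)}$. Since $d_{G/K}(eK, gK) \leq d_G(e, g)$ (the projection $G \to G/K$ is distance-nonincreasing for the metrics we chose), we get $\Xi(g)^{-2/p} \leq C_1 (1 + d_G(e,g))^{2N/p} e^{(2c/p) d_G(e,g)}$, and also $(1 + d_{G/K}(eK,gK))^a \leq (1 + d_G(e,g))^a$. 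Absorbing the polynomial factors into an exponential — there is $C_2$ with $(1+t)^{a + 2N/p} \leq C_2\, e^t$ for all $t \geq 0$ — we obtain
\[
(1 + d_{G/K}(eK, gK))^a\, \Xi(g)^{-2/p} \leq C_3\, e^{(1 + 2c/p)\, d_G(e,g)}.
\]
Hence, multiplying by $|h(g)|$ and taking the supremum, $\|h\|_{\cC^p;\, a, 1, 1} \leq C_3\, \|h\|_{b, 1, 1}$ with $b := 1 + 2c/p$. Reinstating the derivatives, $\|f\|_{\cC^p;\, a, P_1, P_2} \leq C_3\, \|f\|_{b, P_1, P_2}$ with the same $b$, uniformly in $f \in C^\infty_c(G)$. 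This shows the identity map on $C^\infty_c(G)$ is continuous from the $\calS(G)$-seminorms to each $\cC^p(G)$-seminorm, so it extends to a continuous injection $\calS(G) \to \cC^p(G)$; injectivity follows since both are function spaces on $G$ and the extension still sends $f$ to $f$.

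The main obstacle is simply assembling the correct lower bound on $\Xi$ in the form needed and confirming the metric comparison $d_{G/K}(eK,gK) \le d_G(e,g)$ and the polynomial-to-exponential absorption; all of these are standard but must be cited precisely (the $\Xi$ estimate from \cite{Knapp} or Harish-Chandra's work, as used in \cite{Warner79}). No subtlety arises from the derivatives $P_1, P_2$ because they factor through identically on both sides; the only place care is needed is that $b$ depends on $p$ (it blows up as $p \to 0$), which is fine since $\cC^p(G)$ is a fixed target for each $p$.
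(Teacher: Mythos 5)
Your proposal is correct and follows essentially the same route as the paper, whose entire proof is the one-line observation that the weight functions $(1+d_{G/K}(eK,gK))^{a}\,\Xi(g)^{-2/p}$ in the $\cC^p$-seminorms are dominated by exponentials in $d_G(e,g)$; you have simply supplied the standard lower bound on $\Xi$ and the metric comparison that make this explicit.
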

\begin{proof}
%
The weight functions in the definition of the seminorms on $\cC^p(G)$ can be bounded by functions that increase exponentially in the distance to the identity element. 
\end{proof}

\begin{remark}\label{rem why SG}
At this point, let us comment on why we use the algebra $\calS(G)$ in the construction of the higher index on $G/K$. The spaces $\cC^1(G)$ and $\cC^2(G)$ are in fact Fr\'echet algebras as well, see pages 49--51 of \cite{Warner79} for the former, and Proposition 12.16 in \cite{Knapp} for the latter. These have two disadvantages for our purposes, however.
\begin{enumerate}
\item While the geometric traces from Subsection \ref{sec prelim traces} extend continuously to $(\cC^2(G) \otimes \End(V))^{K \times K}$, the trace \eqref{eq def T} 
is  only continuous on $(\cC^p(G) \otimes \End(V))^{K \times K}$ for $0<p<1$, hence not necessarily on the algebras  $(\cC^1(G) \otimes \End(V))^{K \times K}$ and  $(\cC^2(G) \otimes \End(V))^{K \times K}$. 
%
 \item It was shown in Lemma 3.8 in \cite{HST20} that the image of 
 the idempotent in \eqref{eq index idempotent} under the map $\Psi$ from Lemma \ref{lem AEG} lies in the unitisation of $(\cC^2(G) \otimes \End(V))^{K \times K}$, but the Fourier transform techniques used there do not seem to generalise easily to an argument that it lies in $(\cC^p(G) \otimes \End(V))^{K \times K}$ for smaller $p$. The heat operators on the diagonal in \eqref{eq index idempotent}  were shown to map into $(\cC^p(G) \otimes \End(V))^{K \times K}$ for all $p>0$ in Proposition 2.4 in \cite{BM83}. An extension of these arguments  to the off-diagonal elements in \eqref{eq index idempotent}  may be possible, but in view of the proof of Lemma 2.3 in \cite{BM83}, we expect this to be as involved as the proof that they map into $(\calS(G) \otimes \End(V))^{K \times K}$ in this paper and \cite{GHW25a}.
\end{enumerate}
So two key advantages of the algebra $\calS(G)$ over $\cC^1(G)$ and $\cC^2(G)$ are that 
\begin{enumerate}
\item  the functionals in Subsection \ref{sec spec tr} are continuous traces on  $(\calS(G) \otimes \End(V))^{K \times K}$, and
\item the idempotent in  \eqref{eq index idempotent} maps into $(\calS(G) \otimes \End(V))^{K \times K}$,
\end{enumerate}
as shown in the present paper.

Furthermore, the algebra $\cA_{S, \varphi}(E)^H$, and hence the higher index of Definition \ref{def H index},  can be defined for more general spaces than $G/K$, see \cite{GHW25a}.
\end{remark}

\subsection{Geometric traces on $\calS(G)$}\label{sec traces}

In this subsection, we prove Propositions \ref{prop tr SG}, \ref{prop cocycle SG} and \ref{prop Tlam} and Lemma \ref{lem T2lam}.
We start with a precise form of the statement that a conjugation-invariant functional is a trace.
\begin{lemma}\label{lem conj trace}
Let $H$ be a unimodular Lie group. Let $\cA(H)$ be a locally convex convolution algebra of functions on $H$, containing $C^{\infty}_c(H)$ as a dense subalgebra. Suppose that convolution is continuous as a map from $\cA(H) \times \cA(H)$ to $\cA(H)$.
Let $\tau\colon \cA(H) \to \C$ be a conjugation-invariant, continuous linear functional. Suppose that for all $f_1, f_2 \in C^{\infty}_c(H)$,
\beq{eq tau f1f2}
\tau(f_1 * f_2) = \int_H \tau\bigl(x\mapsto f_1(xy^{-1})f_2(y) \bigr)\, dy.
\eeq
Then $\tau$ is a trace on $\cA(H)$.
\end{lemma}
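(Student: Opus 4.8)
The plan is to deduce the trace identity $\tau(f_1 * f_2) = \tau(f_2 * f_1)$ for general $f_1, f_2 \in \cA(H)$ from the hypotheses, in two stages: first establish it on the dense subalgebra $C^{\infty}_c(H)$ using only conjugation invariance and \eqref{eq tau f1f2}, and then extend to all of $\cA(H)$ by continuity.

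For the first stage, I would start with $f_1, f_2 \in C^{\infty}_c(H)$ and rewrite the right-hand side of \eqref{eq tau f1f2}. The function $x \mapsto f_1(xy^{-1}) f_2(y)$, as a function of $x$, is (for fixed $y$) the function $f_2(y) \cdot (R_{y} f_1)$ where $(R_y f_1)(x) = f_1(xy^{-1})$ is a right translate. The key move is to use conjugation invariance of $\tau$: I would change variables inside $\tau$, replacing $x$ by $y x y^{-1}$ (legitimate since $\tau$ is conjugation invariant, and $H$ is unimodular so this substitution preserves the relevant integral structure), to turn $\tau(x \mapsto f_1(xy^{-1}) f_2(y))$ into $\tau(x \mapsto f_1(y x y^{-1} y^{-1}) f_2(y)) = \tau(x \mapsto f_1(y x y^{-1} \cdot y^{-1}) f_2(y))$; simplifying $y x y^{-1} y^{-1}$ is not quite what is wanted, so instead I would substitute $x \mapsto y^{-1} x y$ to get $f_1(y^{-1} x y \cdot y^{-1}) = f_1(y^{-1} x)$, yielding $\tau(x \mapsto f_1(y^{-1}x) f_2(y)) = \tau(x \mapsto (L_{y^{-1}} f_1)(x) f_2(y))$. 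Then integrating over $y$ and recognising $\int_H (L_{y^{-1}} f_1)(x) f_2(y)\, dy = \int_H f_1(y^{-1} x) f_2(y)\, dy = (f_2 * f_1)(x)$, and pulling $\tau$ out of the integral (justified by continuity of $\tau$ and the fact that the integrand is compactly supported and smooth in $y$, so the integral converges in $\cA(H)$), gives $\tau(f_1 * f_2) = \tau(f_2 * f_1)$ on $C^{\infty}_c(H)$.

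For the second stage, given arbitrary $f_1, f_2 \in \cA(H)$, I would pick sequences $f_1^{(n)}, f_2^{(n)} \in C^{\infty}_c(H)$ converging to $f_1, f_2$ respectively in $\cA(H)$ (possible since $C^{\infty}_c(H)$ is dense). By continuity of convolution, $f_1^{(n)} * f_2^{(n)} \to f_1 * f_2$ and $f_2^{(n)} * f_1^{(n)} \to f_2 * f_1$ in $\cA(H)$; by continuity of $\tau$, applying $\tau$ commutes with these limits, so $\tau(f_1 * f_2) = \lim_n \tau(f_1^{(n)} * f_2^{(n)}) = \lim_n \tau(f_2^{(n)} * f_1^{(n)}) = \tau(f_2 * f_1)$. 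Hence $\tau$ is a trace on $\cA(H)$.

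The main obstacle is the first stage: making the change-of-variables argument inside $\tau$ fully rigorous. Conjugation invariance of $\tau$ as stated applies to elements of $\cA(H)$, and one must check that the map $x \mapsto f_1(y^{-1}x y y^{-1}) f_2(y) = f_1(y^{-1}x) f_2(y)$ and its conjugate $x \mapsto f_1(x) f_2(y)$ (appropriately translated) are genuinely related by a conjugation in the sense $\tau$ respects, and that unimodularity is what makes the substitution measure-preserving so that the outer integral over $y$ is unaffected. One also needs to justify interchanging $\tau$ with the integral $\int_H \cdots dy$: since for $f_1, f_2 \in C^{\infty}_c(H)$ the integrand $y \mapsto (x \mapsto f_1(xy^{-1})f_2(y))$ is a compactly-supported continuous (indeed smooth) $\cA(H)$-valued function, the vector-valued integral lies in $\cA(H)$ and $\tau$, being continuous and linear, passes through it. Everything else is bookkeeping.
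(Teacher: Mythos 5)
Your first two steps coincide with the paper's: apply \eqref{eq tau f1f2}, then use conjugation invariance (the substitution $x\mapsto y^{-1}xy$) to replace the integrand $x\mapsto f_1(xy^{-1})f_2(y)$ by $x\mapsto f_1(y^{-1}x)f_2(y)$; and the closing density-plus-continuity argument is identical. The divergence is in the final step, and that is where your proposal has a gap relative to the stated hypotheses. You conclude by pulling $\tau$ through the $y$-integral, arguing that $y\mapsto\bigl(x\mapsto f_1(y^{-1}x)f_2(y)\bigr)$ is a compactly supported continuous $\cA(H)$-valued map whose integral is $f_2*f_1$, so that continuity of $\tau$ finishes the job. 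But the lemma is stated for an arbitrary locally convex convolution algebra of functions: nothing in the hypotheses guarantees that this map is continuous into $\cA(H)$, that the vector-valued integral exists there (no completeness is assumed), or that it coincides with the pointwise integral $(f_2*f_1)(x)$ (point evaluations need not be continuous on $\cA(H)$). Indeed, if such an interchange were freely available, hypothesis \eqref{eq tau f1f2} would be redundant — its whole purpose is to postulate exactly the interchange of $\tau$ with the $y$-integral for the specific integrands that arise, rather than derive it from structure of $\cA(H)$.

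The paper instead stays within the hypotheses by using \eqref{eq tau f1f2} a \emph{second} time: after the conjugation step, the $x$-dependent substitution $y\mapsto xy^{-1}$ in the outer integral (measure-preserving by unimodularity — this is where unimodularity enters, not in your conjugation step) converts $\int_H\tau\bigl(x\mapsto f_1(y^{-1}x)f_2(y)\bigr)\,dy$ into $\int_H\tau\bigl(x\mapsto f_2(xy^{-1})f_1(y)\bigr)\,dy$, which is precisely the right-hand side of \eqref{eq tau f1f2} for the pair $(f_2,f_1)$ and hence equals $\tau(f_2*f_1)$. To repair your argument you should either route the last step through this second application of \eqref{eq tau f1f2}, or else add to your write-up the standing assumptions on $\cA(H)$ (quasi-completeness, continuity of translations and of point evaluations) under which your vector-valued integration argument is legitimate — which is fine for the algebras actually used in the paper, but is not what the lemma as stated provides.
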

\begin{proof}
Let $f_1, f_2 \in C^{\infty}_c(H)$. Then by  \eqref{eq tau f1f2} and conjugation-invariance of $\tau$,
\[
\tau(f_1 * f_2) = \int_H \tau\bigl(x\mapsto f_1(y^{-1}x))f_2(y) \bigr)\, dy.
\]
By a substitution $y \mapsto  y^{-1}x$ (where we use unimodularity of $H$) and \eqref{eq tau f1f2}, this equals 
\[
\int_H \tau\bigl(x\mapsto f_2(xy^{-1})f_1(y) \bigr)\, dy =
\tau(f_2*f_1). 
\]
So $\tau$ is a trace on $C^{\infty}_c(H)$. By continuity of convolution and of $\tau$, this implies that  $\tau$ is a trace on $\cA(H)$.
\end{proof}
%
%
%
\begin{proposition}\label{prop tr SG F}
If $g$ is semisimple, then $\tau_g$ defines a continuous trace on $\calS(G)$.
\end{proposition}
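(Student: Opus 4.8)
The statement to prove is Proposition~\ref{prop tr SG F}: for semisimple $g \in G$, the orbital integral $\tau_g$ is a continuous trace on $\calS(G)$. There are two things to establish: continuity of the functional $\tau_g\colon\calS(G) \to \C$, and the trace property. I would handle continuity first, and deduce the trace property from Lemma~\ref{lem conj trace} once a mild regularisation issue is dealt with.

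\textbf{Step 1: Continuity.} For semisimple $g$, the centraliser $G_g$ is reductive and unimodular, so the integral $\tau_g(f) = \int_{G_g \backslash G} f(h^{-1}gh)\, d(G_g h)$ makes sense. The key point is a bound of Harish-Chandra type: there are constants and a seminorm $\|\cdot\|_{\cC^p;a,1,1}$ such that $|\tau_g(f)| \lesssim \|f\|_{\cC^p;a,1,1}$ for suitable $a$ and some $p$ (one can take $p = 1$, or any $p \le 1$). Concretely, one uses that along the conjugacy orbit $h \mapsto h^{-1}gh$, the function $\Xi$ decays fast enough and the volume growth of $G_g \backslash G$ is controlled, so that $\int_{G_g \backslash G} \Xi(h^{-1}gh)^{2/p}\bigl(1 + d_{G/K}(eK, h^{-1}gh K)\bigr)^{-a}\, d(G_g h) < \infty$ for $a$ large. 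This is precisely the kind of estimate underlying Harish-Chandra's theory of orbital integrals on $\cC^p(G)$; I would cite the relevant statement (e.g. the convergence of semisimple orbital integrals on Harish-Chandra's Schwartz algebra, cf.\ \cite{Warner79}, or the more recent treatment one is using). Combining this with the continuous inclusion $\calS(G) \hookrightarrow \cC^p(G)$ of Lemma~\ref{lem incl SC} gives continuity of $\tau_g$ on $\calS(G)$.

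\textbf{Step 2: Trace property.} Conjugation-invariance of $\tau_g$ is immediate from its definition (replacing $f$ by $x \mapsto f(z^{-1}xz)$ and substituting $h \mapsto zh$). To apply Lemma~\ref{lem conj trace} with $H = G$ (which is unimodular, being semisimple) and $\cA(H) = \calS(G)$, I still need the Fubini-type identity \eqref{eq tau f1f2}: $\tau_g(f_1 * f_2) = \int_G \tau_g\bigl(x \mapsto f_1(xy^{-1}) f_2(y)\bigr)\, dy$ for $f_1, f_2 \in C^\infty_c(G)$. This is a direct computation: $\tau_g(f_1 * f_2) = \int_{G_g \backslash G} \int_G f_1(h^{-1}gh y^{-1}) f_2(y)\, dy\, d(G_g h)$, and since everything is compactly supported the order of integration may be swapped freely, giving exactly the right-hand side with the inner function $x \mapsto f_1(x y^{-1}) f_2(y)$ evaluated on the orbit of $g$. (One should note $x \mapsto f_1(xy^{-1})f_2(y) \in C^\infty_c(G)$, so $\tau_g$ of it is defined.) Since $C^\infty_c(G)$ is dense in $\calS(G)$ and both convolution and $\tau_g$ are continuous, Lemma~\ref{lem conj trace} then yields that $\tau_g$ is a trace on $\calS(G)$.

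\textbf{Main obstacle.} The only substantive point is the convergence/continuity estimate in Step~1: one needs the semisimple orbital integral to be not just finite but bounded by a continuous seminorm on $\calS(G)$, uniformly. For central $g$ this is trivial (the orbit is a point), but for noncentral semisimple $g$ with noncompact centraliser quotient it requires the Harish-Chandra decay estimates for $\Xi$ restricted to conjugacy classes, together with control of the invariant volume growth on $G_g \backslash G$. I expect the cleanest route is to quote the known continuity of semisimple orbital integrals on $\cC^1(G)$ (or $\cC^p(G)$, $p \le 1$) and then invoke Lemma~\ref{lem incl SC}; reproving that estimate from scratch would be the bulk of the work, and is standard but not short.
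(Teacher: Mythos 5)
Your proposal is correct and follows essentially the same route as the paper: continuity is obtained by quoting the known continuity of semisimple orbital integrals on Harish--Chandra's Schwartz space (the paper uses $\cC^2(G)$ via Theorem 2.1 of \cite{HW2}, itself a consequence of Harish--Chandra's work) together with the continuous inclusion of Lemma~\ref{lem incl SC}, and the trace property follows from conjugation-invariance, the Fubini--Tonelli identity \eqref{eq tau f1f2}, and Lemma~\ref{lem conj trace}. The only cosmetic difference is your preference for $\cC^1(G)$ over $\cC^2(G)$, which is immaterial since $\calS(G)$ embeds continuously into $\cC^p(G)$ for all $p>0$.
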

\begin{proof}
If $g$ is semisimple, then 
$\tau_g$ is a continuous functional on $\cC^2(G)$; see Theorem 2.1  in \cite{HW2}, which is a consequence of Theorem 6 in \cite{HCDSII}. By Lemma \ref{lem incl SC}, $\tau_g$ restricts to a continuous functional on $\calS(G)$. (For a more direct argument, $\tau_g$ is a continuous functional on $\calS(G)$ by (3.4.4), (3.4.36) and (3.4.44) in \cite{BismutHypo}.) It is clearly conjugation-invariant, and it has the property \eqref{eq tau f1f2} by the Fubini-Tonelli theorem. Convolution is continuous on $\calS(G)$ by Lemma \ref{lem alg A infty}. So the claim follows by Lemma \ref{lem conj trace}.
\end{proof}
Proposition \ref{prop tr SG F} implies Proposition \ref{prop tr SG}. 

\begin{proof}[Proof of Proposition \ref{prop cocycle SG}]
By Theorem 3.5 in \cite{ST19}, $\Phi_{P, g}$ is a continuous cyclic cocycle on $\cC^2(G)$. So the claim follows from Lemma \ref{lem incl SC}. (Analogously to the proof of Proposition \ref{prop tr SG F}, we expect a more direct proof based on inequalities from \cite{BismutHypo} to be possible.)
\end{proof}

For the rest of this section, we assume that $G$ has real rank $1$. 
It is noted on page 299 of \cite{OW78} that $\tau_{n_0} + \tau_{n_0^{-1}}$ is a continuous linear functional on $\cC^2(G)$. In fact, the two terms separately define continuous traces on $\calS(G)$; we write out the proof here to show that this is particularly straightforward for the algebra $\calS(G)$.
\begin{lemma}\label{lem T2lam SG}
For all $n \in N_{2\lambda}$, the orbital integral $\tau_n$ defines a continuous trace on $\calS(G)$.
\end{lemma}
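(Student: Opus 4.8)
The plan is to verify the hypotheses of Lemma \ref{lem conj trace} for the functional $\tau_n$ on the algebra $\cA(G) = \calS(G)$, the three ingredients being: continuity of $\tau_n$ on $\calS(G)$, conjugation-invariance, and the identity \eqref{eq tau f1f2}. Conjugation-invariance is immediate from the definition \eqref{eq taug} of an orbital integral (it is built from a conjugation-invariant integrand over a coset space), and the identity \eqref{eq tau f1f2} follows from Fubini--Tonelli exactly as in the proof of Proposition \ref{prop tr SG F}, once we know the relevant integrals converge absolutely; continuity of convolution on $\calS(G)$ is Lemma \ref{lem alg A infty}. So the real content is establishing that $\tau_n$ is a well-defined continuous linear functional on $\calS(G)$.

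For this, first I would recall the structure of the centraliser: since $n \in N_{2\lambda}$ is central in $N$ and $N_{2\lambda}$ is central in $N$, we have $G_n = M_n N$, which is unimodular, so $G_n \backslash G$ carries an invariant measure and $\tau_n(f) = \int_{G_n \backslash G} f(h^{-1}nh)\, d(G_n h)$ makes sense formally. To see convergence and get a continuous estimate, I would use an Iwasawa-type decomposition adapted to $G_n \backslash G$: write $G = G_n \cdot (A N_{\lambda}' K)$ or more simply parametrise $G_n \backslash G$ using the decomposition $G = KAN$ together with $N = N_\lambda N_{2\lambda}$, so that a transversal to $G_n = M_n N$ in $G$ can be taken inside $\overline{N}_{2\lambda} \overline{N}_\lambda$ (the opposite nilpotent) or inside $K A N_\lambda$. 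The key point, as in \cite{OW78} page 299, is that conjugating $n$ by such a transversal element pushes $n$ out to infinity in $G$ (the distance $d_G(e, h^{-1}nh)$ grows at least linearly in the relevant coordinates on $G_n \backslash G$), so that the exponential decay bound $|f(h^{-1}nh)| \leq \|f\|_{a,1,1}\, e^{-a\, d_G(e, h^{-1}nh)}$ built into the seminorms on $\calS(G)$ makes the integral converge and bounds it by a constant times $\|f\|_{a,1,1}$ for $a$ large enough. This gives both convergence and continuity in one stroke.

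The main obstacle I anticipate is making the growth estimate on $d_G(e, h^{-1}nh)$ precise enough over all of $G_n \backslash G$: near the identity coset the conjugate $h^{-1}nh$ stays bounded, so one needs to know that the invariant measure $d(G_n h)$ is finite on the compact part and that on the non-compact directions the displacement grows fast enough (and the measure does not grow faster) to win. Concretely, using coordinates $h = \exp(X)$ with $X$ in a transversal subspace to $\mathfrak{g}_n$, one computes $h^{-1} n h = \exp(\Ad(h)^{-1}\log n + \cdots)$ and reads off that the $A$-component and the $\overline N$-component of $h^{-1}nh$ blow up polynomially in $\|X\|$ and hence $d_G(e, h^{-1}nh) \gtrsim \log(1 + \|X\|)$ at worst, or linearly in the better directions; combined with at most polynomial growth of the transversal measure in $\|X\|$, and exponential decay of $f$, absolute convergence follows, and the bound is uniform in $f$ via a single seminorm $\|f\|_{a, 1,1}$. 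Once this estimate is in hand, the proof is: (1) convergence and continuity of $\tau_n$ on $\calS(G)$ from the estimate; (2) conjugation-invariance from the definition; (3) the Fubini identity \eqref{eq tau f1f2}; (4) apply Lemma \ref{lem conj trace} with $H = G$ (unimodular since $G$ is semisimple) and $\cA(H) = \calS(G)$, using Lemma \ref{lem alg A infty} for continuity of convolution, to conclude that $\tau_n$ is a continuous trace.

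Finally, since the statement is for all $n \in N_{2\lambda}$ (in particular $n_0$ and $n_0^{-1}$), nothing extra is needed: the estimate above is uniform over compact sets of $n$ and in any case applies to each fixed nontrivial $n$, and for $n = e$ the functional $\tau_e$ is just $f \mapsto \vol(G_e \backslash G)\, f(e)$-type expression which is trivially continuous on $\calS(G)$ (or one simply notes $G$ has no need of this degenerate case here). Tensoring with $\tr_V$ and passing to $(\calS(G) \otimes \End(V))^{K \times K}$ to obtain Lemma \ref{lem T2lam} is then routine, exactly as Proposition \ref{prop tr SG F} implies Proposition \ref{prop tr SG}.
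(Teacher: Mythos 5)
Your overall strategy coincides with the paper's: reduce everything to Lemma \ref{lem conj trace} (conjugation-invariance is immediate, the identity \eqref{eq tau f1f2} is Fubini--Tonelli, convolution is continuous by Lemma \ref{lem alg A infty}), so that the real content is the continuity estimate for $\tau_n$, obtained by parametrising $G_n\backslash G$ via an Iwasawa-type decomposition and playing the exponential-decay seminorms of $\calS(G)$ against the growth of the conjugated element. The paper does exactly this, using $N<G_n$ to dominate $\tau_n$ by an integral over $G/N\cong KA$ with Jacobian $a^{2\rho_N}$ as in \eqref{eq Haar KAN}.

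However, the quantitative claims you use to resolve your ``main obstacle'' are not correct, and as stated they would not close the argument. First, in the exponential coordinates you propose ($h=\exp(X)$, $X$ transversal to $\kg_n$) the invariant measure is \emph{not} of polynomial growth: on the $A$-direction its density is $a^{2\rho_N}=e^{2\rho_N(\log a)}$, which is exponential in $\|X\|$. Against an exponentially growing measure, a lower bound $d_G(e,hnh^{-1})\gtrsim\log(1+\|X\|)$ is useless — the integral would diverge for every $a$ in the seminorm. What actually happens is that the distance is computed \emph{exactly}: for $n=\exp(Y)$ with $Y\in\kn_{2\lambda}$ one has $ana^{-1}=\exp(\Ad(a)Y)$ and $d_G(e,ana^{-1})=a^{2\lambda}\|Y\|$, i.e.\ the distance grows like $e^{2\lambda(\log a)}$ — doubly exponentially fast relative to your logarithmic worst case — and this is what beats $a^{2\rho_N}$ (one needs $c>\dim(\kn_\lambda)/2+\dim(\kn_{2\lambda})$, cf.\ \eqref{eq def rho}). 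Second, your dichotomy ``compact part where the conjugate stays bounded / non-compact part where the distance grows'' misses the actual geometry: the region where $ana^{-1}$ stays bounded is the entire half $\{\lambda(\log a)\le \mathrm{const}\}$ of $A$, which is \emph{not} compact, and there $d_G(e,ana^{-1})\to 0$, so the seminorms give no decay at all. Convergence on that half comes solely from the decay of the Jacobian $a^{2\rho_N}\to 0$, a point your proposal does not address. So while the architecture of your proof is the right one, the key estimate needs to be redone with the exact distance formula and the explicit Jacobian, treating the two ends of $A$ by different mechanisms.
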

\begin{proof}
We first show that $\tau_n$ is a continuous linear functional on $\calS(G)$.
Note that $N<G_{n}$, because $n \in N_{2\lambda}$. So for all $f \in \calS(G)$, \eqref{eq Haar KAN} implies that
\[
|\tau_n(f)| \leq \int_{G/N}| f(xnx^{-1})|\, d(xN)
= \int_{KA} |f(kana^{-1}k^{-1})| a^{2\rho_N}\, dk\, da.
\]
For all $c>0$, this is smaller than or equal to
\beq{eq taun 1}
\|f\|_{c, 0} \int_{KA} e^{-cd(e, kana^{-1}k^{-1})} a^{2\rho_N}\, da \leq \|f\|_{c, 0}  e^{c \diam(K)}\int_{A} e^{-cd(e, ana^{-1})} a^{2\rho_N}\, da
\eeq
by the triangle inequality. If $n = \exp(Y)$ for $Y  \in \kn_{2\lambda}$, then the distance from $e$ to $ana^{-1}$ in the abelian group $N_{2\lambda}$ is
\[
d_{N_{2\lambda}}(1, \exp(\Ad(a)Y)) = \|\Ad(a)Y\| = a^{2\lambda} \|Y\|.
\]
This is the same as the distance from $e$ to $ana^{-1}$ in $G$, because $t \mapsto \exp(tY)$ is a geodesic. So the right hand side of \eqref{eq taun 1} is smaller than or equal to
\[
 \|f\|_{c, 0}  e^{c \diam(K)} \|Y\| \int_{A} a^{-2c\lambda} a^{2\rho_N}\, da.
\]
By \eqref{eq def rho}, the integral converges if $c>\dim(\kn_{\lambda})/2+ \dim(\kn_{2\lambda})$. So $\tau_n$ is indeed continuous.

It is immediate that $\tau_n$ is conjugation-invariant. And it satisfies \eqref{eq tau f1f2} for compactly supported functions, by the Fubini-Tonelli theorem. So Lemma \ref{lem conj trace} implies the claim.
\end{proof}
Lemma \ref{lem T2lam SG} implies Lemma \ref{lem T2lam}.

\begin{proposition}\label{prop Tlam SGF}
The integral \eqref{eq def Tlambda} converges for all $f \in \calS(G)$, and defines a continuous trace on this algebra.
\end{proposition}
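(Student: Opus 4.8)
\textbf{Proof plan for Proposition \ref{prop Tlam SGF}.}

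The plan is to imitate the structure of the proof of Lemma \ref{lem T2lam SG}: first establish that the integral \eqref{eq def Tlambda} converges absolutely for every $f \in \calS(G)$, with an estimate showing that $\tau_{\lambda}$ is continuous in the seminorms defining $\calS(G)$; then observe that $\tau_{\lambda}$ is conjugation-invariant and satisfies the identity \eqref{eq tau f1f2} on $C^{\infty}_c(G)$; and finally invoke Lemma \ref{lem conj trace} together with Lemma \ref{lem alg A infty} to conclude that $\tau_{\lambda}$ is a continuous trace. Conjugation-invariance is immediate from the defining formula $\tau_\lambda(f) = \int_N \int_K f(knk^{-1})\,dk\,dn$, since conjugating $f$ by $h \in G$ and then integrating $k$ over $K$ and $n$ over $N$ just reparametrises the same integral (using that $K$ is compact and conjugation by $K$ preserves $N$, together with invariance of $dn$ under such conjugations); and the Fubini--Tonelli property \eqref{eq tau f1f2} holds for compactly supported functions because all the integrals in sight are then over compact sets.

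The analytic heart of the argument is the convergence estimate. For $f \in \calS(G)$ and any $c > 0$, the triangle inequality gives
\[
|\tau_{\lambda}(f)| \leq \int_N \int_K |f(knk^{-1})|\, dk\, dn \leq \|f\|_{c,0,0}\, e^{c\diam(K)} \int_N e^{-c\, d_G(e, n)}\, dn,
\]
so it suffices to show that $\int_N e^{-c\, d_G(e,n)}\, dn < \infty$ for $c$ large enough. Here I would use that $N$, with the induced Riemannian metric, has polynomial (in fact, since $G$ has real rank $1$, the metric is that of a nilpotent group of step at most $2$) volume growth bounded by exponential growth in $d_G$ restricted to $N$; more concretely, parametrising $N = N_\lambda N_{2\lambda}$ via $\exp$ and using the Haar-measure decomposition $dn = dX\,dY$ on $\kn_\lambda \oplus \kn_{2\lambda}$ together with the fact that $d_G(e, \exp(X)\exp(Y))$ grows at least like $\tfrac{1}{2}\log(1+\|X\|^2+\|Y\|)$ (a comparison of the left-invariant Riemannian distance on $N$ with the Euclidean distance on the Lie algebra, valid in a horospherical subgroup of a rank-one group), one gets that the integrand decays faster than any negative power of $\|X\|$ and $\|Y\|$ once $c$ exceeds a constant depending only on $\dim \kn_\lambda$ and $\dim\kn_{2\lambda}$. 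Alternatively, and perhaps more cleanly, I would invoke the Iwasawa decomposition \eqref{eq Haar KAN} together with standard estimates relating $d_G(e, n)$ to $\log\|\mathrm{Ad}$-action$\|$, exactly as the uniform-exponential-growth bound \eqref{eq unif exp} was used elsewhere: the point is that the function $n \mapsto e^{-c\, d_G(e,n)}$ on $N$ is integrable for $c$ sufficiently large, which is precisely a restriction-to-$N$ version of \eqref{eq unif exp} for $G$.

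I expect the main obstacle to be making the lower bound $d_G(e, n) \gtrsim \log(1 + |n|)$ (or the corresponding integrability statement) rigorous with an explicit enough threshold on $c$ — in the $N_{2\lambda}$ case of Lemma \ref{lem T2lam SG} the authors could exploit that $t \mapsto \exp(tY)$ is literally a geodesic, but a generic element of $N$ does not lie on a one-parameter geodesic subgroup, so one needs instead the general comparison between the word/Riemannian metric on a simply connected nilpotent Lie group and the Euclidean metric on its Lie algebra (equivalently, a known sub-exponential-volume-growth estimate for $N$). This is classical, but it requires a short lemma or a citation; once it is in hand, the rest of the proof is a routine application of Lemma \ref{lem conj trace}, just as in Lemma \ref{lem T2lam SG}.
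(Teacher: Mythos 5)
There is a genuine gap at the step you dismiss as immediate: the conjugation-invariance of $\tau_{\lambda}$. Lemma \ref{lem conj trace} requires invariance under conjugation by \emph{every} $h \in G$, not just by $K$. Your parenthetical justification is also factually wrong: conjugation by a general $k \in K$ does \emph{not} preserve $N$ (only $M = Z_K(A)$ normalises $N$); invariance under $K$-conjugation does hold, but for the trivial reason that $dk$ is translation-invariant. The real issue is invariance under conjugation by $A$ and $N$. Naively, conjugating by $a \in A$ rescales the Haar measure $dn$ by the modular factor $a^{2\rho_N}$, so $\int_N f(ana^{-1}n_0(ana^{-1})^{-1})\,dn$ is \emph{not} obviously equal to $\int_N f(nn_0n^{-1})\,dn$; one must show that the $K$-average exactly compensates for this Jacobian. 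The paper does this by choosing $\varphi \in C_c(G)$ with $\int_{AN}\varphi(xan)\,da\,dn = 1$, rewriting $\tau_{\lambda}(f) = \int_N\int_G \varphi(g)f(gng^{-1})\,dg\,dn$ via the decomposition \eqref{eq Haar KAN} and the Jacobian identity $a^{2\rho_N}\int_N f(an'n(an')^{-1})\,dn = \int_N f(n)\,dn$, and observing that the left translate $L_h\varphi$ satisfies the same normalisation. The authors even flag in a footnote that they could not find a reference for this invariance, which is a strong signal that it is not a one-line reparametrisation. Your proposal as written would not survive this step.

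On the convergence and continuity part, your route differs from the paper's: the paper simply cites page 299 of \cite{OW78} for continuity of $\tau_{\lambda}$ on $\cC^2(G)$ and then uses the continuous inclusion $\calS(G) \hookrightarrow \cC^2(G)$ (Lemma \ref{lem incl SC}), whereas you propose a direct estimate $|\tau_{\lambda}(f)| \leq \|f\|_{c,0,0}\,e^{c\diam(K)}\int_N e^{-c\,d_G(e,n)}\,dn$ reduced to the integrability of $e^{-c\,d_G(e,n)}$ over $N$. That reduction is sound and the integrability does hold for large $c$ (via the logarithmic lower bound on $d_G(e,n)$ in terms of the Euclidean size of $\log n$, or equivalently the standard $c$-function-type estimates), but as you yourself note it needs a lemma or citation to be complete; it would give a more self-contained and quantitative argument than the paper's, at the cost of some work. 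The Fubini--Tonelli verification of \eqref{eq tau f1f2} for compactly supported $f$ is fine as you state it.
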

\begin{proof}
The integral  defines a continuous functional on $\cC^2(G)$, as noted on page 299 of \cite{OW78}. So  by Lemma \ref{lem incl SC}, it is a continuous functional on $\calS(G)$.

For the trace property, we use Lemma \ref{lem conj trace}. Let us show that $\tau_{\lambda}$ is conjugation-invariant.\footnote{This seems to be known; the functional $\tau_{\lambda}$ is called an `invariant term' in \cite{BM83} (and denoted by $T_1$ there). We include a proof here because we could not immediately find a reference.} Because the action by $AN$ on $G$ by right multiplication is proper and cocompact, there exists a function
$\varphi \in C_c(G)$ such that for all $x \in G$,
\beq{eq phi AN}
\int_{AN} \varphi(xan) \, da\, dn = 1.
\eeq
The group $AN$ normalises $N$, and for all $f \in C_c(G)$, and all $a \in A$ and $n' \in N$,
\[
a^{2\rho_N}  \int_{N} f(an'n (an')^{-1})\, dn = \int_N  f(n)\, dn,
\]
where $a^{2\rho_N}$ is a Jacobian.
 It follows
that
for any  function $\varphi$ as in \eqref{eq phi AN}, and all $f \in C_c^{\infty}(G)$,
\beq{eq tau lam 1}
\begin{split}
\tau_{\lambda}(f) &= \int_N \int_{KAN} \varphi(kan')  f(kn k^{-1}) \, dk\, da\, dn'\, dn\\
&= \int_N \int_{KAN} \varphi(kan') f(kan' n (kan')^{-1}) a^{2\rho_N}\, dk\, da\, dn'\, dn\\
&= \int_N \int_{G} \varphi(g) f(g n g^{-1}) \, dg \, dn,
\end{split}
\eeq
by \eqref{eq Haar KAN}. 
For $h \in G$, let $f_h \in C_c^{\infty}(G)$  be given by $f_h(g) = f(hgh^{-1})$. Then by \eqref{eq tau lam 1} and a substitution $hg \mapsto g$,
\beq{eq tau lam 2}
\tau_{\lambda}(f_h) = \int_N \int_{G} (L_{h}\varphi)(g) f(g n g^{-1}) \, dg \, dn.
\eeq
The function $L_h\varphi$ has the same property \eqref{eq phi AN} as $\varphi$, so \eqref{eq tau lam 2} equals $\tau_{\lambda}(f)$. 

We have seen that $\tau_{\lambda}$ is continuous and conjugation-invariant. It satisfies \eqref{eq tau f1f2} by the Fubini--Tonelli theorem, so it is a trace on $\calS(G)$ by Lemma \ref{lem conj trace}.
\end{proof}
Proposition \ref{prop Tlam SGF} implies Proposition \ref{prop Tlam}.

%

\subsection{Spectral traces on $\calS(G)$}\label{sec trace Gamma}

In this subsection, we prove Lemma \ref{lem tau res}, Propositions \ref{prop Rd cts} and \ref{prop trace T}, and Corollary  \ref{cor Rd trace}.

\begin{proof}[Proof of Lemma \ref{lem tau res}]
The functional \eqref{eq tau res} is continuous on $(\cC^p(G) \otimes \End(V))^{K \times K}$ for $0<p<1$, as pointed out below Theorem 8.4 in \cite{Warner79}. Hence it is continuous on $(\calS(G) \otimes \End(V))^{K \times K}$ by Lemma \ref{lem incl SC}. It is immediate from the definition that $\tau_{\rem}$ is conjugation-invariant, so the claim follows by Lemma \ref{lem conj trace}.
\end{proof}

\begin{proposition}\label{prop Rd cts F}
Let $F \subset \hat K$ be a finite set, and let $\cC^1_{F}(G) \subset \cC^1(G)$ be the subspace of isotypical components corresponding to $F$ for the right $K$-action. 
For all $f \in \cC^1_{F}(G)$, the operator $R_d^{\Gamma}(f)$ is trace-class. 
The linear map
\beq{eq RdGamma L2d C1}
R_d^{\Gamma}\colon \cC^1_{F}(G) \to \calL^1(L^2_d(\Gamma \backslash G)) 
\eeq
is continuous.
\end{proposition}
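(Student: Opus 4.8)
The plan is to diagonalise $R^\Gamma_d(f)$ along the spectral decomposition of $L^2_d(\Gamma\backslash G)$ and to estimate each block, using the restriction on $K$-types to control ranks and a central element of $\cU(\kg)$ to produce decay. Write $L^2_d(\Gamma\backslash G)\cong\bigoplus_{\pi} m_d(\pi)\,\calH_\pi$ (a Hilbert-space direct sum) over the countable set of irreducible unitary representations $\pi$ of $G$ occurring there, with finite multiplicities $m_d(\pi)$: for the cuspidal part this is the theorem of Gelfand and Piatetski--Shapiro, while the residual part is a still sparser discrete family of subquotients of the continuous spectrum, and is elementary in the real rank one case in force here. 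Then $R^\Gamma_d(f)=\bigoplus_\pi m_d(\pi)\,\pi(f)$, so it suffices to establish a bound
\[
\sum_{\pi} m_d(\pi)\,\|\pi(f)\|_1 \ \leq\ C\,\nu(f)
\]
with $\nu$ a continuous seminorm on $\cC^1_F(G)$ and $C$ independent of $f$; this gives at once that $R^\Gamma_d(f)$ is trace-class and that $R^\Gamma_d$ is continuous into $\calL^1(L^2_d(\Gamma\backslash G))$.

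First I would exploit the $K$-type restriction to bound ranks. Let $e_F\in C^\infty(K)$ be the idempotent cutting out the $K$-types in $F$; membership $f\in\cC^1_F(G)$ means $f$ is fixed by right convolution with $e_F$, so $\pi(f)=\pi(f)\,\pi(e_F)$, where $\pi(e_F)$ is the orthogonal projection of $\calH_\pi$ onto its $F$-isotypic subspace. Hence $\pi(f)$ has rank at most $n_F(\pi):=\sum_{\delta\in F}[\pi|_K:\delta]\dim(\delta)$, and
\[
\|\pi(f)\|_1 \ \leq\ n_F(\pi)\,\|\pi(f)\| \ \leq\ n_F(\pi)\,\|f\|_{L^1(G)},
\]
using the continuous inclusion $\cC^1(G)\hookrightarrow L^1(G)$. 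Only the operator norm of $\pi(f)$ enters here, so no temperedness of $\pi$ is needed and the residual (possibly non-tempered) representations require no extra work.

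Next comes decay in the Casimir eigenvalue. Let $\Omega\in\cU(\kg)$ be the Casimir, acting on smooth vectors of $\calH_\pi$ by a real scalar $\lambda_\pi$. Decomposing $\Omega=\Omega_{\kp}-\Omega_{\kk}$ for the inner product on $\kg$ defined by the Killing form and the Cartan involution, on a unitary representation one has $\pi(\Omega_{\kp})\leq 0$, while $\pi(\Omega_{\kk})$ acts on each $K$-type $\delta$ of $\pi$ by a scalar $-c_\delta$ with $c_\delta\geq 0$; since $\pi(\Omega_{\kp})=\pi(\Omega)+\pi(\Omega_{\kk})$ is the scalar $\lambda_\pi-c_\delta$ on the $\delta$-isotypic subspace, this forces $\lambda_\pi\leq c_\delta$ for every $K$-type $\delta$ of $\pi$. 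Thus, with $c_F:=\max_{\delta\in F}c_\delta$ and $z:=c_F+1-\Omega\in\cU(\kg)$ (a central element), every $\pi$ appearing in $L^2_d(\Gamma\backslash G)$ has a $K$-type in $F$, so $z$ acts on $\calH_\pi$ by the scalar $\mu_\pi:=c_F+1-\lambda_\pi\geq 1$. As $z$ is central, $R(z)$ commutes with the right $K$-action, maps $\cC^1_F(G)$ continuously into itself, and satisfies $\pi(R(z)^k f)=\mu_\pi^k\,\pi(f)$ for all $k\geq 0$. Combined with the rank bound,
\[
\|\pi(f)\|_1 \ \leq\ \mu_\pi^{-k}\,n_F(\pi)\,\|R(z)^k f\|_{L^1(G)} \ \leq\ C_k\,\mu_\pi^{-k}\,n_F(\pi)\,\nu_k(f)
\]
for suitable continuous seminorms $\nu_k$ on $\cC^1_F(G)$.

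Putting these together, the whole statement reduces to the summability
\[
\sum_{\pi} m_d(\pi)\,n_F(\pi)\,\mu_\pi^{-k} \ <\ \infty \qquad\text{for $k$ large,}
\]
and \textbf{this is the main obstacle}: it is a Weyl-type upper bound for the discrete automorphic spectrum with bounded $K$-type, namely that the number of $\pi$ occurring in $L^2_d(\Gamma\backslash G)$ with a $K$-type in $F$ and $\mu_\pi\leq T$, weighted by $m_d(\pi)\,n_F(\pi)$, is $O(T^N)$ for some $N$. For the cuspidal part this is the upper bound in the Weyl law for cusp forms on the finite-volume space $\Gamma\backslash G$ (equivalently, polynomial-in-$1/t$ control of the trace of the heat semigroup $e^{-tR(z)}$ on the $F$-isotypic part of $L^2_{\mathrm{cusp}}(\Gamma\backslash G)$), and the residual part is strictly sparser and hence obeys the same estimate a fortiori; in the real rank one setting these bounds are contained in the trace-formula analysis of Osborne--Warner \cite{OW78, Warner79} and of Barbasch--Moscovici \cite{BM83}. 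Granting it, $\sum_\pi m_d(\pi)\|\pi(f)\|_1\leq C_k\,\nu_k(f)\sum_\pi m_d(\pi)n_F(\pi)\mu_\pi^{-k}=C'\,\nu_k(f)$, which proves both assertions. An alternative that avoids the spectral decomposition is to factor $f=\sum_j g_j\ast h_j$ in $\cC^1_F(G)$ by a Dixmier--Malliavin-type theorem and to verify that each cuspidal operator is Hilbert--Schmidt, using square-integrability of the cuspidal kernel on $(\Gamma\backslash G)^2$ coming from rapid decay of cusp forms; but controlling the $\Gamma$-sum and the cuspidal neighbourhoods for merely Schwartz $g_j$ appears no easier than the bound above.
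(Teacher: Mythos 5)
Your argument is correct in outline but follows a genuinely different route from the paper. The paper does not pass through the spectral decomposition at all: it quotes Warner's Theorem 4.6 for the trace-class statement, splits $L^2_d(\Gamma\backslash G)$ into the space of cusp forms and its orthogonal complement, upgrades Warner's pp.\ 39--40 estimate (by inserting absolute values so as to bound $\Tr(|R^{\Gamma}_0(\alpha)|)$ rather than $\Tr(R^{\Gamma}_0(\alpha))$) to get continuity of $R^{\Gamma}_0\colon \cC^1(G)\to\calL^1(L^2_0(\Gamma\backslash G))$, and disposes of the complement by observing that its $F$-isotypic part is finite-dimensional, so that the trace norm there is controlled by $\|f\|_{L^1(G)}$. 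Your route instead decomposes $R^{\Gamma}_d(f)=\bigoplus_\pi m_d(\pi)\pi(f)$, bounds ranks by the $K$-type projection (note that by admissibility $n_F(\pi)\le\sum_{\delta\in F}\dim(\delta)^2$ is in fact \emph{uniformly} bounded over $\pi$, which you could use to simplify), produces decay from the central element $c_F+1-\Omega$, and reduces everything to the weighted counting bound $\sum_\pi m_d(\pi)\mu_\pi^{-k}<\infty$. That reduction is sound, and your Casimir inequality $\lambda_\pi\le c_\delta$ is the standard one; the gain is that the mechanism (admissibility plus Schwartz decay in the Casimir eigenvalue) is made completely transparent, and the argument does not need the cuspidal/residual dichotomy as such. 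The cost is that the flagged summability estimate is not really lighter than what the paper imports: the cleanest way to obtain it is to apply the Gelfand--Piatetski-Shapiro/Warner trace-class theorem to a function of the form $\beta\ast\beta^*$ supported on the $F$-isotypic part (so that all terms $m_d(\pi)\tr\pi(\beta\ast\beta^*)\ge 0$ and the finiteness of the trace yields the counting bound), i.e.\ to the very statements the paper's proof cites directly; and for the residual part your ``a fortiori'' remark should be replaced by the concrete fact, used in the paper, that the $F$-isotypic subspace of $L^2_0(\Gamma\backslash G)^\perp\cap L^2_d(\Gamma\backslash G)$ is finite-dimensional. With those two citations made precise, your proof is complete.
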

\begin{proof}
The first claim is Theorem 4.6 in \cite{Warner79}.

Let $L^2_0(\Gamma \backslash G) \subset L^2_d(\Gamma \backslash G)$ be the $G$-invariant subspace of cusp forms. By Theorem 4.3 in \cite{Warner79}, the representation $R^{\Gamma}$ defines a linear map
\beq{eq R0Gam}
R_0^{\Gamma}\colon \cC^1(G) \to \calL^1(L^2_0(\Gamma \backslash G)). 
\eeq
On pages 39--40 of \cite{Warner79}, it is shown that $f \mapsto \Tr(R^{\Gamma}_0(f))$ is a continuous functional on  $\cC^1(G)$. In fact, the arguments given there can be strengthened slightly, to show that  \eqref{eq R0Gam} is continuous. Indeed, 
 apart from replacing the left  quasiregular representation used in \cite{Warner79} by the  right  quasiregular representation we use here, 
the only changes that need to be made are that in the second-last set of displayed equations on page 40 of \cite{Warner79}, one should take absolute values of $L^0_{G/\Gamma}(\alpha)$, $(U(\alpha)e_{U, i}, e_{U, i})$ and the integral over $G$ in the third line. Then the bottom equation on page 40 of \cite{Warner79} becomes $\tr(| L_{G/\Gamma}^0(\alpha)|) \leq C \| \Delta_L^{2N}  \alpha\|_1$, and continuity of \eqref{eq R0Gam} follows.
 
Let $L^2_0(\Gamma \backslash G)^{\perp} \subset L^2_d(\Gamma \backslash G)$ be the orthogonal complement to $L^2_0(\Gamma \backslash G)$. It is shown on page 32 of \cite{Warner79} that the representation of $G$ on  this space is $K$-finite. Hence the image of the compression
\beq{eq 0Rd}
(R_d^{\Gamma})^{\perp}\colon \cC^1_{F}(G) \to \calL^1(L^2_0(\Gamma \backslash G)^{\perp}) 
\eeq
of \eqref{eq RdGamma L2d C1} 
is finite-dimensional. Let $n_{F}$ be its dimension. Because $\cC^1(G)$ embeds continuously into $L^1(G)$, we find that for all $f \in \cC^1_{F}(G) $,
\[
\Tr(|(R_d^{\Gamma})^{\perp}(f)|) \leq n_{F} \|(R_d^{\Gamma})^{\perp}(f)\|_{\cB(L^2_d(\Gamma \backslash G)^{\perp})} \leq n_{F} \|f\|_{L^1(G)},
\]
and  \eqref{eq 0Rd} is also continuous.

Continuity of \eqref{eq R0Gam} and \eqref{eq 0Rd} implies continuity of \eqref{eq RdGamma L2d C1}.
\end{proof}
Proposition \ref{prop Rd cts F} and Lemma \ref{lem incl SC} imply Proposition \ref{prop Rd cts}.

\begin{remark}
In the proof of Proposition \ref{prop Rd cts F}, we used Theorem 4.6 in \cite{Warner79}, which states that $R_d^{\Gamma}(f)$ is trace-class for all $K$-finite $f \in \cC^1(G)$. This was in fact generalised to $G$ of arbitrary real rank by M\"uller \cite{Muller89}. This was further generalised to  $f \in \cC^1(G)$ that are not necessarily  $K$-finite by M\"uller \cite{Muller98} and Ji \cite{Ji98}.
\end{remark}


Now consider a finite subset $F \subset \widehat{K \times K}$. Let $\calS_F(G) \subset \calS(G)$ be the subalgebra of functions  in isotypical components corresponding to $F$, with respect to the left and right regular representations.
Lemma \ref{lem incl SC} and Proposition \ref{prop Rd cts F} have the following consequence. This implies Corollary \ref{cor Rd trace}.
\begin{corollary}\label{cor Rd trace F}
The map $f \mapsto \Tr(R_d^{\Gamma}(f))$ is a continuous trace on $\calS_F(G)$.
\end{corollary}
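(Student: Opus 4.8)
The plan is to combine the two ingredients that immediately precede this corollary. By Proposition \ref{prop Rd cts F}, for every finite subset $F \subset \widehat{K \times K}$ the map $R_d^{\Gamma}$ is a continuous linear map from $\cC^1_{F'}(G)$ to $\calL^1(L^2_d(\Gamma \backslash G))$, where $F'$ is the (finite) set of right $K$-types occurring in $F$; composing with the operator trace $\Tr\colon \calL^1(L^2_d(\Gamma \backslash G)) \to \C$, which is continuous, shows that $f \mapsto \Tr(R_d^{\Gamma}(f))$ is a continuous linear functional on $\cC^1_{F'}(G)$. By Lemma \ref{lem incl SC}, $\calS(G)$ embeds continuously into $\cC^1(G)$, and this inclusion sends $\calS_F(G)$ into $\cC^1_{F'}(G)$, so the functional restricts to a continuous linear functional on $\calS_F(G)$.

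It then remains to check the trace property, i.e.\ that $\Tr(R_d^{\Gamma}(f_1 * f_2)) = \Tr(R_d^{\Gamma}(f_2 * f_1))$. First I would verify it on the dense subalgebra $C^{\infty}_c(G) \cap \calS_F(G)$ (or argue directly for $K$-finite compactly supported functions, which suffices by density): there $R_d^{\Gamma}$ is an algebra homomorphism, so $R_d^{\Gamma}(f_1 * f_2) = R_d^{\Gamma}(f_1) R_d^{\Gamma}(f_2)$, and one is reduced to the cyclicity of the operator trace, $\Tr(AB) = \Tr(BA)$, which holds here because both $R_d^{\Gamma}(f_1)$ and $R_d^{\Gamma}(f_2)$ are trace-class by Proposition \ref{prop Rd cts F} (and a product of trace-class operators is trace-class). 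Then, since convolution is continuous on $\calS(G)$ by Lemma \ref{lem alg A infty}, hence on the closed subalgebra $\calS_F(G)$, and since the functional $\Tr \circ R_d^{\Gamma}$ is continuous on $\calS_F(G)$, the trace identity extends from the dense subalgebra to all of $\calS_F(G)$ by continuity.

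This is essentially the same pattern used in the proofs of Proposition \ref{prop tr SG F}, Lemma \ref{lem T2lam SG} and Proposition \ref{prop Tlam SGF}: establish continuity via the inclusion into a Harish-Chandra Schwartz space, check the trace identity on compactly supported functions, and pass to the completion. There is no serious obstacle here; the only mild subtlety is bookkeeping of $K$-types, namely that one must pass from a finite set $F \subset \widehat{K \times K}$ to the finite set $F'$ of right $K$-types it involves in order to apply Proposition \ref{prop Rd cts F}, and one should note that $\calS_F(G) \hookrightarrow \cC^1_{F'}(G)$ is indeed the relevant restriction of the inclusion of Lemma \ref{lem incl SC}. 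Everything else is a direct quotation of results already established in the excerpt.
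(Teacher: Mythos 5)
Your proposal is correct and follows the same route the paper intends: the paper derives the corollary directly from Lemma \ref{lem incl SC} and Proposition \ref{prop Rd cts F}, leaving implicit exactly the steps you spell out (composing the continuous map $R_d^{\Gamma}\colon \cC^1_{F'}(G)\to\calL^1$ with the operator trace for continuity, and cyclicity of the operator trace plus density of compactly supported $K$-finite functions for the trace identity). Your bookkeeping of passing from $F\subset\widehat{K\times K}$ to the finite set $F'$ of right $K$-types is also the correct way to invoke Proposition \ref{prop Rd cts F}.
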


\begin{remark}
To prove Corollary \ref{cor Rd trace F}, we could have used the result from \cite{Warner79} that $f \mapsto \Tr(R^{\Gamma}_0(f))$ is a continuous functional on  $\cC^1(G)$, rather than the stronger Proposition \ref{prop Rd cts F}. We have included this proposition, because a consequence, Proposition \ref{prop Rd cts},  is also used in Corollary \ref{cor DGamma}.
\end{remark}

%
The following result is a consequence of Theorem  8.4 in \cite{Warner79} and  the theorem on page 299 of \cite{OW78} and the comments below it.
\begin{theorem}[Osborne--Warner]\label{thm STF}
For all $0<p<1$, 
the expression \eqref{eq def T} 
%
defines a continuous linear functional $\tau_{\rem}$ on $\cC^p(G)$.
%
%
\end{theorem}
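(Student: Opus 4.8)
The plan is to deduce this directly from the Osborne--Warner trace formula for real-rank-one groups, in the form proved in \cite{OW78} and extended to the Harish-Chandra Schwartz classes by Warner \cite{Warner79}. The point of the theorem is \emph{not} to reprove any convergence, but to observe that, after rearrangement, $\tau_{\rem}$ is precisely the explicitly-known combination of terms appearing in that trace formula, each of which is continuous on $\cC^p(G)$ for $0<p<1$.

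First I would recall the shape of the trace formula. By \cite[Thm.~8.4]{Warner79} (together with the theorems on pp.~293 and~299 of \cite{OW78} and the comments below them), for $f$ in the test-function space $\cC^p(G)$ with $0<p<1$ there is an identity between a spectral side and a geometric side in which every individual term converges absolutely and depends continuously on $f$. On the spectral side one has the principal term $\Tr(R^\Gamma_d(f))$ together with the constant-term-of-Eisenstein-series contribution, which in our normalization is $\tfrac14\tau_{\res}(f)$. On the geometric side one finds: the sum $\sum_{(\gamma)\ \mathrm{ss}}\vol(\Gamma_\gamma\backslash G_\gamma)\,\tau_\gamma(f)$ over $\Gamma$-conjugacy classes of semisimple elements; the two unipotent contributions $\tfrac{C_{2\lambda}(\Gamma)}{2}(\tau_{n_0}+\tau_{n_0^{-1}})(f)$ and $\tfrac{C_\lambda(\Gamma)}{A(\kn_\lambda)|\lambda|}\tau_\lambda(f)$ attached to the root spaces $\kn_{2\lambda}$ and $\kn_\lambda$; and a remaining collection of explicit terms — the central/identity contribution and the remaining cuspidal ("weighted") contributions — whose precise description is exactly the content of pp.~293 and~299 of \cite{OW78}.

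Solving this identity for that last collection of explicit terms is, by construction, the definition \eqref{eq def T} of $\tau_{\rem}$. Hence, on the common test-function space, $\tau_{\rem}$ equals that explicit collection of terms, and it remains only to note that each of them is continuous on $\cC^p(G)$ for $0<p<1$: the central/identity contribution is a Plancherel-type expression and the hyperbolic ones are finite sums of semisimple orbital integrals, so both are continuous already on $\cC^2(G)\supset\cC^p(G)$; the remaining cuspidal contributions are shown in \cite{OW78,Warner79} to converge absolutely and continuously precisely on $\cC^p(G)$ for $0<p<1$. The same estimates show that every term appearing on the left of \eqref{eq def T} — in particular the infinite sum over semisimple conjugacy classes and the sum over $\hat M$ implicit in $\tau_{\res}$ — is an absolutely convergent, continuous functional on $\cC^p(G)$, so that \eqref{eq def T} really is a well-defined finite linear combination of continuous functionals, hence continuous.

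The only genuine work beyond invoking \cite[Thm.~8.4]{Warner79} is bookkeeping: one must check that our conventions match those of \cite{OW78} — the Haar measures on $G$, $G_\gamma$, $N$, $N_\lambda$, $N_{2\lambda}$, the meaning of the constants $C_\lambda(\Gamma)$, $C_{2\lambda}(\Gamma)$, $A(\kn_\lambda)$, and the factor $\tfrac14$ in front of $\tau_{\res}$ — so that \eqref{eq def T} is exactly the rearrangement of their identity and not an off-by-a-constant variant. I expect the real analytic difficulty, namely the absolute convergence and $\cC^p$-continuity of the cuspidal contributions — which is precisely why $0<p<1$ is required and $p=1$ does not suffice — to be entirely contained in \cite{OW78,Warner79}, so for us it functions as a black box and no new estimates are needed.
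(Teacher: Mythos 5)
Your proposal matches the paper's treatment: the paper gives no independent proof, deriving the statement as an immediate consequence of Theorem 8.4 in \cite{Warner79} and the theorem on p.~299 of \cite{OW78} (with its following comments), exactly as you do by identifying $\tau_{\rem}$ with the remaining explicit terms of the Osborne--Warner trace formula, whose $\cC^p$-continuity for $0<p<1$ is taken from those references. The convention-matching bookkeeping you flag is the only content beyond the citation, and the paper treats it the same way.
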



\begin{corollary}\label{cor trace T F}
The functional $\tau_{\rem}$ from Theorem \ref{thm STF} restricts to a continuous trace on $\calS(G)$.
\end{corollary}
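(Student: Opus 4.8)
The plan is to deduce this from Theorem~\ref{thm STF}, Lemma~\ref{lem incl SC} and Lemma~\ref{lem conj trace}. Continuity is immediate: by Theorem~\ref{thm STF} the functional $\tau_{\rem}$ is continuous on $\cC^p(G)$ for every $0<p<1$, and by Lemma~\ref{lem incl SC} the inclusion $\calS(G)\hookrightarrow\cC^p(G)$ is continuous, so $\tau_{\rem}$ restricts to a continuous linear functional on $\calS(G)$. By Lemma~\ref{lem alg A infty}, $\calS(G)$ is a Fr\'echet convolution algebra containing $C^\infty_c(G)$ as a dense subalgebra with continuous convolution, so the hypotheses of Lemma~\ref{lem conj trace} are met with $H=G$ and $\cA(H)=\calS(G)$. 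It therefore only remains to check that $\tau_{\rem}$ is conjugation-invariant and that it satisfies \eqref{eq tau f1f2} for all $f_1,f_2\in C^\infty_c(G)$.

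Both of these are verified term by term in the defining expression \eqref{eq def T}. For the geometric terms $\tau_\gamma$ (over semisimple $\gamma$), $\tau_{n_0}+\tau_{n_0^{-1}}$ and $\tau_\lambda$, conjugation-invariance and property \eqref{eq tau f1f2} were already established in the proofs of Proposition~\ref{prop tr SG F}, Lemma~\ref{lem T2lam SG} and Proposition~\ref{prop Tlam SGF} (the latter via Fubini--Tonelli and the explicit argument given there for invariance of $\tau_\lambda$); the infinite sum $\sum_{(\gamma)}\vol(\Gamma_\gamma\backslash G_\gamma)\tau_\gamma$ inherits both properties since it converges absolutely on $C^\infty_c(G)$. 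For the spectral terms $\Tr\circ R^\Gamma_d$ and $\tau_{\res}$, conjugation-invariance follows from unimodularity of $G$ and cyclicity of the operator trace: for $f^h(g):=f(h^{-1}gh)$ one has $R^\Gamma_d(f^h)=R^\Gamma_d(h)R^\Gamma_d(f)R^\Gamma_d(h)^{-1}$ and $\pi_\Gamma(\sigma)(f^h)=\pi_\Gamma(\sigma)(h)\pi_\Gamma(\sigma)(f)\pi_\Gamma(\sigma)(h)^{-1}$, while $c_\Gamma(\sigma)$ commutes with $\pi_\Gamma(\sigma)$. Property \eqref{eq tau f1f2} for these terms follows from $\pi(f_1*f_2)=\pi(f_1)\pi(f_2)$ together with the operator identity $\int_G\pi\bigl(x\mapsto f_1(xy^{-1})f_2(y)\bigr)\,dy=\pi(f_1)\pi(f_2)$, applied with $\pi=R^\Gamma_d$ and $\pi=\pi_\Gamma(\sigma)$, followed by taking the ($c_\Gamma(\sigma)$-twisted) operator traces. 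With all terms checked, Lemma~\ref{lem conj trace} yields that $\tau_{\rem}$ is a trace on $\calS(G)$.

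The only point needing slight care is the trace-class property of $R^\Gamma_d(f)$ for a general $f\in C^\infty_c(G)$, since Theorem~4.6 of \cite{Warner79} covers only $K$-finite $f$; this is exactly where one invokes the extension to non-$K$-finite $f\in\cC^1(G)$ due to \cite{Muller98, Ji98}. Alternatively one can bypass this entirely by first proving the trace property on each subalgebra $\calS_F(G)$ of Corollary~\ref{cor Rd trace F}: there $\Tr\circ R^\Gamma_d$ is a continuous trace by that corollary, $\tau_{\res}$, $\tau_\gamma$, $\tau_{n_0}+\tau_{n_0^{-1}}$ and $\tau_\lambda$ are traces on $\calS(G)$ hence on $\calS_F(G)$, and the continuous functional $\tau_{\rem}$ is the limit in $\calS_F(G)'$ of finite partial sums of these traces, so it is a trace on $\calS_F(G)$; since $\bigcup_F\calS_F(G)$ is dense in $\calS(G)$ and $\tau_{\rem}$ and convolution are continuous, $\tau_{\rem}$ is a trace on all of $\calS(G)$. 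I expect no substantive obstacle beyond this bookkeeping.
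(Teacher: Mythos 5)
Your proposal is correct and in substance coincides with the paper's argument: the paper's proof is precisely your ``alternative'' route in one line, namely continuity via Theorem \ref{thm STF} together with the continuous inclusion of $\calS(G)$ into $\cC^p(G)$, and the trace property because \eqref{eq def T} exhibits $\tau_{\rem}$ as a (convergent) linear combination of functionals each already established to be a continuous trace (Propositions \ref{prop tr SG} and \ref{prop Tlam}, Lemma \ref{lem T2lam}, Lemma \ref{lem tau res}, Corollary \ref{cor Rd trace}). Your primary term-by-term verification of the hypotheses of Lemma \ref{lem conj trace} is just a more explicit rendering of the same decomposition, and your care about the trace-class property for non-$K$-finite $f$ is a legitimate point that the paper likewise resolves by restricting to finitely many $K$-types (or citing M\"uller/Ji).
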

\begin{proof}
The restriction of $\tau_{\rem}$ to $\calS(G)$ is continuous by Lemma \ref{lem alg A infty} and Theorem \ref{thm STF}. It is a trace by Propositions \ref{prop tr SG} and \ref{prop Tlam}  and Lemma \ref{lem T2lam} .
\end{proof}
Corollary \ref{cor trace T F} implies Proposition \ref{prop trace T}.

\section{Contributions from semisimple elements}\label{sec ss contr}

To prove Theorem \ref{thm index ss}, one can use the computations in Section 4.1 of \cite{BM83}. Those computations are based on some nontrivial results from representation theory. We give an independent, index-theoretic proof here, based on a generalisation of the main result in \cite{HW4}. We hope that this approach and some of the intermediate results may be of independent interest. The index-theoretic arguments also extend to a proof of Theorem \ref{thm index ss higher}.

For a locally compact group $H$ acting properly and cocompactly on a manifold, we will use 
the standard equivariant index 
\beq{eq def H index}
\ind_H(D) \quad \in K_0(C^*_r(H))
\eeq
of  $H$-equivariant elliptic operators $D$. 
There are many equivalent definitions, see e.g.\ \cite{Connes94, Kasparov83, WY20}.

\subsection{A holomorphic  fixed-point formula}

The first step is a version of the fixed-point formula in \cite{HW2} for Dolbeault--Dirac operators.
Let $X$ be a complex manifold. Let $G$ be a connected, real semisimple Lie group acting properly  and cocompactly on $X$ by holomorphic maps. Let $E \to M$ be a holomorphic, Hermitian, $G$-equivariant vector bundle. Let $\bar \partial_E$ be the Dolbeault operator on antiholomorphic differential forms twisted by $E$, and let $\bar \partial_E^*$ be its formal adjoint. Then we have the Dolbeault--Dirac operator $\bar \partial_E + \bar \partial_E^*$.

Let $g \in G$ be semisimple. 
Because $G$ acts holomorphically on $X$, the fixed point set $X^g \subset X$ of $g$ is a complex submanifold, and the normal bundle $N \to X^g$ is a complex vector bundle.  (To be precise, the connected components of $X^g$ are  complex submanifolds of possibly different dimensions, so the rank of $N$ may jump between these components.) 
 Its complexification decomposes into holomorphic and antiholomorphic parts as $N \otimes \C = N^{1,0} \oplus N^{0,1}$. Fix a Haar measure $dz$ on the centraliser $G_g$. Let $\chi^g \in C^{\infty}_c(X^g)$ be such that for all $x \in X^g$,
\[
\int_{G_g} \chi^g(zx)\, dz = 1.
\]
(Here we use compactness of $X^g/Z$; see Lemma 2.4 in \cite{HW2}.)

The restriction $E^g \to X^g$  of $E$ is a holomorphic vector bundle over the complex manifold $X^g$, so we have the Dolbeault--Dirac operator $\bar \partial_{E^g} + \bar \partial_{E^g}^*$ on $X^g$. For every   connected component $X_j^g$ of $X^g$, we write $E_j^g := E|_{X_j^g}$. We similarly have the Dolbeault--Dirac operator  $\bar \partial_{E^g_j} + \bar \partial_{E^g_j}^*$ on $X^g_j$.

Consider the equivariant indices
\[
\begin{split}
\ind_G(\bar \partial_E + \bar \partial_E^*) &\in K_0(C^*_r(G));\\
\ind_{G_g}(\bar \partial_{E^g_j} + \bar \partial_{E^g_j}^*) &\in K_0(C^*_r(G_g)).
\end{split}
\]
The orbital integral trace $\tau_g$ is a well defined map on $K_0(C^*_r(G))$;  see Section 2.1 of \cite{HW2}. The von Neumann trace $\tau_e$ defines a map on $K_0(C^*_r(G_g))$. We write  $\overline{g^{\Z}}$ for the closure of the cyclic subgroup of $G$ generated by $g$. 
\begin{theorem}\label{thm fixed Dolb}
\begin{enumerate}
\item[(a)]
If $g$ does not lie in a compact subgroup of $G$, then $\tau_g(\ind_G(\bar \partial_E + \bar \partial_E^*) ) = 0$.
\item[(b)] 
If 
 $g$ lies in a compact subgroup of $G$,
then
\[
\tau_g(\ind_G(\bar \partial_E + \bar \partial_E^*) ) =
 \int_{T(X^g)} \chi^g \frac{\ch\left(\left[\sigma_{\bar \partial_{E^g} + \bar \partial_{E^g}^*}\right](g)\right) \Todd(T(X^g) \otimes \C)}{\ch\left( \left[\Bigwedge N^{0,1}\right](g) \right)}.
 \]
\item[(c)] Suppose that $g$ lies in a compact subgroup of $G$. 
Suppose, furthermore,  that on each connected component $X_j^g$ of $X^g$, the group element $g$ acts on every fibre of $E_j^g$ as the same scalar $g|_{E_j^g}$.
Suppose also that the  bundle $N^{0,1}$ is trivial as a $\overline{g^{\Z}}$-equivariant vector bundle. Then
\[
\tau_g(\ind_G(\bar \partial_E + \bar \partial_E^*) ) = \sum_j 
 \frac{g|_{E_j^g}}{\det_{\C}(1-g|_{N^{0,1}}) |_{X_j^g} }\tau_e(\ind_{G_g}(\bar \partial_{E_j^g} + \bar \partial_{E_j^g}^*)).
\]
\end{enumerate}
\end{theorem}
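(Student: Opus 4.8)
The plan is to deduce all three parts from the fixed-point theorem of \cite{HW2}. The key observation is that the Dolbeault--Dirac operator $\bar\partial_E + \bar\partial_E^*$ is a constant multiple of the $\Spinc$-Dirac operator associated to the canonical $G$-equivariant $\Spinc$ structure of the complex manifold $X$ --- the one with spinor bundle $\bigwedge^{0,\bullet}T^*X$ and anticanonical determinant line bundle --- twisted by $E$. Hence the results of \cite{HW2} apply to it. Part (a) is then immediate: it is the assertion of that theorem that $\tau_g(\ind_G(D)) = 0$ for a $G$-equivariant elliptic operator $D$ on a manifold with proper cocompact $G$-action whenever $g$ is not contained in a compact subgroup of $G$.

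For part (b), I would first record the complex geometry of the fixed-point set. Since $G$ acts by holomorphic maps, each connected component $X_j^g$ of $X^g$ is a complex submanifold; because $g$ fixes $X_j^g$ pointwise, $dg$ acts trivially on $T(X_j^g)$ and without eigenvalue $1$ on the complex normal bundle $N|_{X_j^g}$, so $\det_{\C}(1 - g|_{N^{0,1}})$ is nowhere zero and $\ch([\Bigwedge N^{0,1}](g))$ is invertible in a neighbourhood of the zero section of $T(X^g)$. The principal symbol class of $\bar\partial_E + \bar\partial_E^*$ restricts over $X^g$ to that of the Dolbeault--Dirac operator $\bar\partial_{E^g} + \bar\partial_{E^g}^*$ of the induced holomorphic structure on $E^g \to X^g$. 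Feeding these data into the fixed-point formula of \cite{HW2} and rewriting the $\Spinc$ characteristic classes appearing there in terms of $\Todd(T(X^g)\otimes\C)$ and Chern characters yields the stated identity. Here the $G_g$-action on $X^g$ is proper and cocompact by Lemma 2.4 in \cite{HW2} --- the same fact already used for the existence of $\chi^g$ --- so the integrand is a compactly supported form and the integral is well defined.

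For part (c), I would start from (b) and simplify component by component. On $X_j^g$ the element $g$ acts trivially on $T(X_j^g)$, $T^*(X_j^g)$ and hence on $\bigwedge^{0,\bullet}T^*(X_j^g)$; together with the hypothesis that $g$ acts on $E_j^g$ as the single scalar $g|_{E_j^g}$, the equivariant symbol class evaluated at $g$ factors as $\ch([\sigma_{\bar\partial_{E_j^g}+\bar\partial_{E_j^g}^*}](g)) = g|_{E_j^g}\,\ch([\sigma_{\bar\partial_{E_j^g}+\bar\partial_{E_j^g}^*}])$, the last being the ordinary, non-equivariant symbol class on $T(X_j^g)$. Since $N^{0,1}$ is $\overline{g^{\Z}}$-equivariantly trivial, $g$ acts on its fibres by a fixed unitary, so $\ch([\Bigwedge N^{0,1}](g)) = \sum_k (-1)^k \tr(\wedge^k(g|_{N^{0,1}})) = \det_{\C}(1 - g|_{N^{0,1}})|_{X_j^g}$, a nonzero constant on $X_j^g$. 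Pulling these constants out of the integral over $T(X_j^g)$ reduces the claim to the identity $\int_{T(X_j^g)}\chi^g\,\ch([\sigma_{\bar\partial_{E_j^g}+\bar\partial_{E_j^g}^*}])\,\Todd(T(X_j^g)\otimes\C) = \tau_e(\ind_{G_g}(\bar\partial_{E_j^g}+\bar\partial_{E_j^g}^*))$; but this is exactly part (b) applied to the identity element $e\in G_g$ acting on the $G_g$-manifold $X_j^g$, for which the fixed-point set is all of $X_j^g$, the normal bundle vanishes so the denominator is $1$, $\chi^g$ restricts to a cutoff function for the $G_g$-action on $X_j^g$, and $\tau_e$ is the von Neumann trace on $K_0(C^*_r(G_g))$. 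Summing over $j$ gives part (c).

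The step I expect to be the main obstacle is the passage in (b) from the $\Spinc$-Dirac fixed-point formula of \cite{HW2} to the clean Todd-class form stated here: one must check that the formula really does apply in this (slightly more general) geometric setting, that $\ch([\Bigwedge N^{0,1}](g))$ is genuinely invertible near the zero section, and that the resulting differential form is compactly supported so the integral is meaningful. A secondary difficulty is the componentwise bookkeeping in (c): the rank of $N$ may jump between components of $X^g$, and the complex orientations must be tracked so that $\det_{\C}(1 - g|_{N^{0,1}})$ --- rather than $\det_{\R}$ or an expression carrying an extra sign --- appears in the denominator.
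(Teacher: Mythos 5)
Your overall strategy coincides with the paper's: all three parts are deduced from the fixed-point theorem of \cite{HW2}, with (a) immediate, (b) obtained by specialising and rewriting that formula, and (c) obtained from (b) by pulling constants out of the integral and recognising the remaining integral as an $L^2$-index. Parts (a) and (c) are essentially correct as you have them; in (c) the paper cites Proposition 4.4 and Theorem 6.12 of \cite{Wang14} for the identification of $\int_{T(X_j^g)}\chi^g\,\ch([\sigma_{\bar\partial_{E_j^g}+\bar\partial_{E_j^g}^*}])\Todd(T(X_j^g)\otimes\C)$ with $\tau_e(\ind_{G_g}(\bar\partial_{E_j^g}+\bar\partial_{E_j^g}^*))$, whereas you invoke "part (b) at $g=e$"; these amount to the same $L^2$-index theorem, so that variation is harmless.

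There is, however, a genuine error at the one substantive step of part (b). You assert that the principal symbol class of $\bar\partial_E+\bar\partial_E^*$ restricts over $X^g$ to that of $\bar\partial_{E^g}+\bar\partial_{E^g}^*$. It does not: since $\bigwedge^{0,\bullet}T^*X|_{X^g}\cong\bigwedge^{0,\bullet}T^*(X^g)\otimes\Bigwedge(N^{0,1})^*$, the correct decomposition is
\[
\bigl[\sigma_{\bar\partial_E+\bar\partial_E^*}|_{X^g}\bigr](g)=\Bigl[\Bigwedge N^{1,0}\Bigr](g)\otimes\bigl[\sigma_{\bar\partial_{E^g}+\bar\partial_{E^g}^*}\bigr](g),
\]
and the fixed-point formula of \cite{HW2} carries $\ch\bigl(\bigl[\Bigwedge N\otimes\C\bigr](g)\bigr)$ in its denominator, not $\ch\bigl(\bigl[\Bigwedge N^{0,1}\bigr](g)\bigr)$. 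The stated identity in (b) only emerges after the cancellation $\ch\bigl(\bigl[\Bigwedge N^{1,0}\bigr](g)\bigr)/\ch\bigl(\bigl[\Bigwedge N\otimes\C\bigr](g)\bigr)=1/\ch\bigl(\bigl[\Bigwedge N^{0,1}\bigr](g)\bigr)$, by multiplicativity of the Chern character. If one feeds your (incorrect) restricted symbol into the formula literally, the answer is off by exactly the factor $\ch\bigl(\bigl[\Bigwedge N^{1,0}\bigr](g)\bigr)$. You correctly identify this passage as "the main obstacle", but the one concrete claim you make about it would derail the computation rather than resolve it. A smaller point: in (a), the theorem of \cite{HW2} is stated under a "finite Gaussian orbital integrals" hypothesis (and in (b) only for "almost all" elliptic $g$); the paper has to note that this hypothesis holds for all semisimple $g$ by estimates from \cite{BismutHypo} before the citation applies unconditionally.
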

\begin{proof}
Part (a) follows from the second statement in  Theorem 2.5 in \cite{HW2}. (This in fact holds for all semisimple $g \in G$ not contained in compact subgroups. The ``finite Gaussian orbital integral'' condition used there holds for general semisimple group elements, by the equalities from \cite{BismutHypo} also mentioned in the proof of Proposition \ref{prop tr SG F}.)


To prove part (b), we again apply Theorem 2.5 in \cite{HW2}. This now yields
\[
\tau_g(\ind_G(\bar \partial_E + \bar \partial_E^*) ) =
 \int_{T(X^g)} \chi^g 
 \frac{\ch\left(\left[\sigma_{\bar \partial_{E} + \bar \partial_{E}^*}\right] (g)\right) \Todd(T(X^g) \otimes \C)}{\ch\left(\left[\Bigwedge N\otimes \C\right](g)\right)}.
\]
To be precise,  in Theorem 2.5 in \cite{HW2} it is stated that 
this equality holds for almost all elliptic elements $g$ not contained in compact subgroups. The result applies to ``almost all'' such $g$ because of the ``finite Gaussian orbital integrals'' condition. But this  turns out to hold in general, see equations (3.3.36), (3.4.4) and (4.2.9) in \cite{BismutHypo}.

We use the decomposition
\[
\left[\sigma_{\bar \partial_{E} + \bar \partial_{E}^*}|_{X^g}\right] (g) =  \left[\Bigwedge (N^{0,1})^*\right](g) \otimes \left[\sigma_{\bar \partial_{E^g} + \bar \partial_{E^g}^*}\right](g) = 
 \left[\Bigwedge N^{1,0}\right](g) \otimes \left[\sigma_{\bar \partial_{E^g} + \bar \partial_{E^g}^*}\right](g)
\]
By multiplicativity of the Chern character, 
\[
\frac{\ch\left( \left[\Bigwedge N^{1,0}\right](g) \right)}{\ch\left(\left[\Bigwedge N\otimes \C\right](g)\right)} = \frac{1}{\ch\left( \left[\Bigwedge N^{0,1}\right](g) \right)},
\]
so (b) follows.

To deduce (c) from (b), we first note that  the assumption on  the way $g$ acts on $E|_{X^g}$ implies that
\[
\left[\sigma_{\bar \partial_{E^g} + \bar \partial_{E^g}^*}|_{X_j^g}\right](g) = g|_{E_j^g} \left[\sigma_{\bar \partial_{E_j^g} + \bar \partial_{E_j^g}^*}\right],
\]
for every connected component $X_j^g$ of $X^g$. 
And if $N^{0,1}$ is trivial, then for all $j$, we have $N^{0,1}|_{X_j^g} = X^g_j \times W_j$ for some representation $W_j$ of the compact abelian group $\overline{g^{\Z}}$. (The dimension of $X^g_j$, and hence of $W_j$, may depend on $j$.) Then 
\[
[\Bigwedge N^{0,1}|_{X_j^g}] = [X_j^g \times \C] \otimes \left[\Bigwedge W_j \right] \quad \in K^0_{\overline{g^{\Z}}}(X_j^g) \cong K^0(X_j^g) \otimes R(\overline{g^{\Z}}).
\]
Here $\Bigwedge W_j$ should be interpreted as a graded representation, i.e.\ 
\[
\left[\Bigwedge W_j \right] = \Bigl[\Bigwedge^{\even} W_j \Bigr]  - \Bigl[\Bigwedge^{\odd} W_j \Bigr]  \in R(\overline{g^{\Z}}).
\]
So
\[
\ch\left( \left[\Bigwedge N^{0,1}\right](g) \right)= \tr \left( g|_{\bigwedge^{\even} W_j} \right) -  \tr \left( g|_{\bigwedge^{\odd} W_j} \right) = {\det}_{\C}(1-g|_{W_j}) = {\det}_{\C}(1-g|_{N^{0,1}})|_{X_j^g}.
\]
We find that
\begin{multline*}
\tau_g(\ind_G(\bar \partial_E + \bar \partial_E^*) ) =\\
\sum_j \left. \frac{\tr(g|_E)}{{\det}_{\C}(1-g|_{N^{0,1}})}\right|_{X_j^g}
 \int_{T(X_j^g)} \chi^g|_{X_j^g} {\ch\left(\left[\sigma_{\bar \partial_{E_j^g} + \bar \partial_{E_j^g}^*}\right]\right) \Todd(T(X_j^g) \otimes \C)}.
\end{multline*}
By Proposition 4.4 and Theorem 6.12 in \cite{Wang14}, the integral on the right equals $\tau_e(\ind_{G_g}(\bar \partial_{E_j^g} + \bar \partial_{E_j^g}^*))$.
\end{proof}

\begin{example}
If $X$ and $G$ are compact, and $X^g$ is finite, then Theorem \ref{thm fixed Dolb}(c) reduces to
\[
\ind_G(\bar \partial_E + \bar \partial_E^*) (g) = \sum_{x \in X^g} 
 \frac{g|_{E_x}}{\det_{\C}(1-g|_{T_xX^{0,1}})  } =  \sum_{x \in X^g} 
 \frac{g|_{E_x}}{\det_{\C}(1-g^{-1}|_{T_xX^{1,0}})  }.
\]
This is consistent with Atiyah and Bott's holomorphic fixed-point theorem, Theorem 4.12 in \cite{ABII}, because in the notation of that paper,
\[
\ind_G(\bar \partial_E + \bar \partial_E^*) (g) = L(g^{-1}, g).
\]
\end{example}

\subsection{A  fixed-point set}

Until the end of Subsection \ref{sec fixed pt GT}, we suppose that $T<K$ is a compact Cartan subgroup of $G$, so $\rank(G) = \rank(K)$.  We fix a positive root system $R^+(G,T)$ for $(G, T)$. This determines a $G$-invariant complex structure on $G/T$ such that $T_{eT}^{1,0}(G/T) = \bigoplus_{\alpha \in R^+(G,T)} \kg^{\C}_{\alpha}$. Let $g \in T$. We will apply Theorem \ref{thm fixed Dolb} to the space $X = G/T$, and we start by analysing the fixed-point set $(G/T)^g$.

\begin{lemma}\label{lem conj norm}
For all $x \in G$ such that $xgx^{-1} \in T$, there exists  $y \in N_G(T)$ such that $xgx^{-1} = ygy^{-1}$.
\end{lemma}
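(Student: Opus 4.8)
The plan is to reduce the statement to the conjugacy of compact maximal tori inside a connected reductive subgroup. Put $g' := xgx^{-1} \in T$. Since $T \subset K$ is compact, $g$ and $g'$ are elliptic, hence semisimple, so their centralisers $G_g$ and $G_{g'}$ are reductive subgroups of $G$, and the identity component $(G_{g'})^0$ is connected reductive. Conjugation by $x$ carries $g$ to $g'$, so $\Ad(x)$ restricts to an isomorphism $G_g \xrightarrow{\sim} G_{g'}$; in particular $xTx^{-1} \subseteq x G_g x^{-1} = G_{g'}$, using that $T \subseteq G_g$ because $g \in T$ and $T$ is abelian. Likewise $T \subseteq G_{g'}$ because $g' \in T$.

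Next I would note that $T$ and $xTx^{-1}$ are compact maximal tori of $(G_{g'})^0$. They are connected, hence lie in $(G_{g'})^0$, and compact since they are conjugate to $T \subseteq K$. Moreover $T$ is a maximal torus of $G$ — being a Cartan subgroup of $G$ — and since maximality of a torus is preserved under conjugation, $xTx^{-1}$ is one too; a maximal torus of $G$ that is contained in the subgroup $(G_{g'})^0$ is a fortiori a maximal torus of $(G_{g'})^0$. Now I invoke the standard fact that any two compact maximal tori of a connected reductive Lie group are conjugate — for instance, place each inside a maximal compact subgroup, use that all maximal compact subgroups of a connected Lie group are conjugate, and conclude via conjugacy of maximal tori in a compact connected group. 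Applied to $(G_{g'})^0$, this yields $z \in (G_{g'})^0 \subseteq G_{g'}$ with $z\,(xTx^{-1})\,z^{-1} = T$.

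Finally, set $y := zx$. Then $yTy^{-1} = z\,(xTx^{-1})\,z^{-1} = T$, so $y \in N_G(T)$; and since $z$ centralises $g'$,
\[
ygy^{-1} = z\,(xgx^{-1})\,z^{-1} = z g' z^{-1} = g' = xgx^{-1},
\]
which is the claim. The only nonformal ingredient is the conjugacy of compact maximal tori in the reductive group $(G_{g'})^0$; the point to be careful about is that $G_{g'}$ may be disconnected, so the conjugating element $z$ must be found inside $G_{g'}$ (indeed inside its identity component), which is precisely what guarantees $z g' z^{-1} = g'$ in the last line.
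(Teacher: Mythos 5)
Your proof is correct and follows essentially the same route as the paper: both reduce the claim to the conjugacy of the two compact maximal tori $T$ and $xTx^{-1}$ inside the identity component of the centraliser $G_{xgx^{-1}}$ (justified the same way, via conjugacy of maximal compact subgroups and of maximal tori therein), and then set $y$ equal to the conjugating element times $x$. Your added care about finding the conjugating element inside $(G_{xgx^{-1}})_0$ is exactly the point the paper also relies on.
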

\begin{proof}
Because $xTx^{-1} \subset G_{xgx^{-1}}$, we have two maximal tori $T$ and $xTx^{-1}$ in the connected component $(G_{xgx^{-1}})_0$. These are conjugate in $(G_{xgx^{-1}})_0$. (See e.g.\ Propositions 6.59 and 6.61 in \cite{Knapp96}, or use the facts that any two maximal compact subgroups of $(G_{xgx^{-1}})_0$ are conjugate, and so are any two maximal tori inside such a compact subgroup.) Let $h \in (G_{xgx^{-1}})_0$ be such that $hxTx^{-1}h^{-1} = T$. Then $y:= hx$ has the desired properties.
\end{proof}

Recall that we used the Weyl groups $W_K = N_K(T)/T$ and $W_{K_{g}} = N_{K_g}(T)/T$. Note that  $G_g w$ is a right $T$-invariant subset of $G$ for all $w \in N_G(T)$.
\begin{lemma}\label{lem fixed GT}
The fixed-point set $(G/T)^g$ decomposes as
\beq{eq fixed GT}
(G/T)^g = \coprod_{w \in W_{K_g} \backslash W_K} G_gw/T,
\eeq
where the disjoint union is over representatives $w \in N_K(T)$ of classes in $W_{K_g} \backslash W_K$.
\end{lemma}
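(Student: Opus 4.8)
The plan is to compute $(G/T)^g$ directly and then rewrite it as a union of $G_g$-orbits indexed by the relevant double cosets. First note that $g\cdot xT=xT$ if and only if $x^{-1}gx\in T$, so $(G/T)^g=\{xT\in G/T : x^{-1}gx\in T\}$. The inclusion $\supseteq$ in \eqref{eq fixed GT} is then immediate: for $w\in N_K(T)$ and $h\in G_g$ we have $(hw)^{-1}g(hw)=w^{-1}(h^{-1}gh)w=w^{-1}gw\in T$, since $w$ normalises $T$ and $g\in T$. So every set $G_gw/T$ with $w\in N_K(T)$ is contained in $(G/T)^g$.

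For the reverse inclusion I would use Lemma \ref{lem conj norm}. If $x^{-1}gx\in T$, then applying that lemma with $x$ replaced by $x^{-1}$ produces $y\in N_G(T)$ with $x^{-1}gx=ygy^{-1}$, which rearranges to $(xy)^{-1}g(xy)=g$, i.e. $x\in G_g y^{-1}$. Since $\rank(G)=\rank(K)$, one has $N_G(T)=N_K(T)$: for $n\in N_G(T)$ the Cartan involution $\theta$ fixes $T$, so $\theta(n)n^{-1}\in N_G(T)\cap\exp(\kp)$, and this intersection is trivial because $N_G(T)/T$ is finite and $T\cap\exp(\kp)=\{e\}$, forcing $n\in K$ (this is standard; cf.\ \cite{Knapp96}). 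Hence $w:=y^{-1}\in N_K(T)$ and $x\in G_gw$, which gives $(G/T)^g=\bigcup_{w\in N_K(T)}G_gw/T$.

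It remains to turn this union into the disjoint union over $W_{K_g}\backslash W_K$. Since $g\in T$ and $T$ is abelian, $T\subseteq G_g$, so for $w\in N_K(T)$ we get $wTw^{-1}\subseteq T\subseteq G_g$, hence $G_gwT=G_gw$; in particular $G_gw/T$ is precisely the $G_g$-orbit of $wT$ in $G/T$. Distinct $G_g$-orbits are disjoint, and $G_gw/T=G_gw'/T$ if and only if $G_gw=G_gw'$, i.e.\ $w'w^{-1}\in G_g$. As $w,w'\in N_K(T)$, this says $w'w^{-1}\in N_K(T)\cap G_g=N_{K_g}(T)$, i.e.\ $w$ and $w'$ represent the same class in $N_{K_g}(T)\backslash N_K(T)=W_{K_g}\backslash W_K$ (using $T\subseteq N_{K_g}(T)$). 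Taking one representative $w\in N_K(T)$ for each class in $W_{K_g}\backslash W_K$ therefore yields exactly \eqref{eq fixed GT}.

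The only non-formal ingredients are Lemma \ref{lem conj norm} and the equal-rank identity $N_G(T)=N_K(T)$; the rest is coset and orbit bookkeeping. I expect the main point to be careful about is matching the equal-or-disjoint dichotomy for $G_g$-orbits with the double-coset indexing, so that the union over $N_K(T)$ collapses correctly to a disjoint union over $W_{K_g}\backslash W_K$.
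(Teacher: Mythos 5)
Your proof is correct and follows essentially the same route as the paper's: Lemma \ref{lem conj norm} yields $(G/T)^g = G_g N_G(T)/T$, and the remaining coset bookkeeping identifying the index set with $W_{K_g}\backslash W_K$ matches the paper's argument. The only cosmetic difference is that you sketch the equal-rank identity $N_G(T)=N_K(T)$ yourself and use $N_K(T)\cap G_g=N_{K_g}(T)$ directly, whereas the paper cites Lemma 6.15 of \cite{HW} for these normaliser identities.
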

\begin{proof}
We first show that
\beq{eq fixed GT 1}
(G/T)^g = G_g N_G(T)/T.
\eeq
It is immediate that the right hand side is contained in the left hand side. Now let $xT \in (G/T)^g$. Then $x^{-1}gx  \in T$. By Lemma \ref{lem conj norm}, there is a $y \in N_G(T)$ such that $x^{-1}gx = y^{-1}gy$. So $xy^{-1} \in G_g$, and $xT = (xy^{-1})yT$ lies in the right hand side of \eqref{eq fixed GT 1}.

Next, we show that
\beq{eq fixed GT 2}
(G/T)^g = \coprod_{w \in N_{G_g}(T)\backslash N_G(T)} G_g w/T,
\eeq
where the disjoint union is over a set of representatives $w \in N_G(T)$. Indeed, if $y \in N_{G_g}(T) \subset G_g$ and $w \in N_G(T)$, then
$G_g yw/T = G_g w/T$. And if $w,w' \in N_G(T)$ and $G_g w/T = G_g w'/T$, then let $y := w'w^{-1} \in N_G(T)$. Then $G_g w/T = G_g yw/T$, so there are $a \in G_g$ and $b \in T$ such that $awb = yw$. Hence $a^{-1}y = wbw^{-1} \in T \subset G_g$. So $y \in G_g$, and therefore $y \in N_{G_g}(T)$. We find that $G_g w/T = G_g w'/T$ if and only if $w'w^{-1} \in N_{G_g}(T)$, which implies that the right hand side of \eqref{eq fixed GT 1} equals the right hand side of  \eqref{eq fixed GT 2}. 

Finally, we have $N_G(T) = N_K(T)$ and $N_{G_g}(T) = N_{K_g}(T)$; see for example Lemma 6.15 in \cite{HW}. 
So the decomposition \eqref{eq fixed GT} follows from \eqref{eq fixed GT 2} and the equality
\[
N_{K_g}(T)\backslash N_K(T) = (N_{K_g}(T)/T)\backslash (N_K(T)/T).
\]
\end{proof}

\begin{example}
If $g=e$, then Lemma \ref{lem fixed GT} reduces to the equality $G/T = G/T$.
At the other extreme, if the powers of $g$ are dense in $T$, then Lemma \ref{lem fixed GT} becomes $(G/T)^T = W_K$; this is Lemma 6.15 in \cite{HW}. 
\end{example}

\subsection{A normal bundle}

For $w \in N_G(T)$, let $J_w$ be the $G$-invariant complex structure on $G/T$ such that 
\[
T_{eT}^{1,0}(G/T) \cong \bigoplus_{\alpha \in R^+(G, T)} \kg^{\C}_{w\alpha}.
\]
Then $J_e$ is the complex structure we have considered so far.

\begin{lemma}\label{lem Rw holom}
Let $w \in N_G(T)$.   
The map $R_w\colon G/T \to G/T$, given by $R_w(xT) = xwT$ for $x \in G$, is a well-defined, $G$-equivariant diffeomorphism. It is holomorphic as a map
\beq{eq Rw holom}
R_w\colon (G/T, J_w) \to (G/T, J_e).
\eeq
\end{lemma}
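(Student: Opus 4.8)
The plan is to verify directly that $R_w$ is well-defined, diffeomorphic and $G$-equivariant, and then to check the holomorphicity statement by computing the differential of $R_w$ at a single point and using $G$-equivariance to propagate the conclusion everywhere.

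First I would dispense with the easy parts. The map $R_w$ is well-defined because right multiplication by $w \in N_G(T)$ sends $xT$ to $xwT$, and if $x' = xt$ for $t \in T$ then $x'wT = xtwT = x(wt')T = xwT$, where $t' = w^{-1}tw \in T$ since $w$ normalises $T$. It is a diffeomorphism with inverse $R_{w^{-1}}$, and it is $G$-equivariant because $R_w(g\cdot xT) = gxwT = g\cdot R_w(xT)$ for all $g \in G$.

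For holomorphicity, the key observation is that both complex structures $J_e$ and $J_w$ are $G$-invariant, and $R_w$ is $G$-equivariant, so it suffices to check that the differential $d(R_w)_{eT}\colon T_{eT}(G/T) \to T_{wT}(G/T)$ intertwines $(J_w)_{eT}$ with $(J_e)_{wT}$; equivariance then gives the statement at every point $xT$, since $x$ acts holomorphically for either structure and $R_w = L_x \circ R_w \circ L_{x^{-1}}$ when restricted appropriately. Now $d(R_w)_{eT}$ is identified, via the left-trivialisations at $eT$ and $wT$, with the adjoint action $\Ad(w^{-1})\colon \kg/\kt \to \kg/\kt$ (composed with the canonical identification of the tangent space at $wT$ with $\kg/\kt$ via left translation by $w$). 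Under this identification, $(J_w)_{eT}$ corresponds to the complex structure on $\kg/\kt$ whose holomorphic part is $\bigoplus_{\alpha \in R^+(G,T)} \kg^{\C}_{w\alpha}$, while the complex structure $(J_e)_{wT}$ pulled back to $\kg/\kt$ via left translation by $w$ is the standard one with holomorphic part $\bigoplus_{\alpha \in R^+(G,T)} \kg^{\C}_{\alpha}$. Since $\Ad(w^{-1})$ maps the root space $\kg^{\C}_{w\alpha}$ to $\kg^{\C}_{\alpha}$, it carries the first holomorphic subspace onto the second, which is exactly the statement that $d(R_w)_{eT}$ is complex-linear from $(G/T, J_w)$ to $(G/T, J_e)$ at $eT$.

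The main obstacle — though it is more bookkeeping than genuine difficulty — is keeping the left-trivialisation conventions straight: one must be careful that the tangent space $T_{wT}(G/T)$ is identified with $\kg/\kt$ using left translation by $w$ (not by the identity), and correspondingly that $J_e$ at the point $wT$, expressed in this trivialisation, becomes the standard root-space decomposition rather than a twisted one. Once the conventions are fixed, the computation $d(R_w)_{eT} = \Ad(w^{-1})$ on $\kg/\kt$ together with $\Ad(w^{-1})\kg^{\C}_{w\alpha} = \kg^{\C}_{\alpha}$ finishes the argument, and $G$-equivariance upgrades pointwise complex-linearity of the differential to global holomorphicity of $R_w$.
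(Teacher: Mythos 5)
Your proof is correct and follows essentially the same route as the paper: both reduce to the differential at $eT$, identify it with $\Ad(w^{-1})$ acting on $\kg/\kt$, check complex-linearity via $\Ad(w^{-1})\kg^{\C}_{w\alpha} = \kg^{\C}_{\alpha}$, and propagate to all points by $G$-equivariance and $G$-invariance of the complex structures. The only cosmetic difference is that the paper factors $R_w = L_w \circ C_{w^{-1}}$ so as to work with a map fixing $eT$, whereas you handle the base-point shift by left-trivialising $T_{wT}(G/T)$ with $L_w$; these amount to the same computation.
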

\begin{proof}
The first claim follows directly from the definitions. We show that \eqref{eq Rw holom} is  holomorphic.

Note that $R_w = L_w \circ C_{w^{-1}}$, where $L$ is the standard action by $G$ on $G/T$, and $C_{w^{-1}} = L_{w^{-1}} \circ R_w$ is induced by conjugation by $w^{-1}$. The map $L_w$ is holomorphic for any of the complex structures $J_{w'}$, because they are $G$-invariant. And
\[
T_{eT}C_{w^{-1}} = \Ad(w^{-1})\colon \kg/\kt \to \kg/\kt.
\]
Because $\Ad(w^{-1})\kg^{\C}_{\alpha} = \kg^{\C}_{w^{-1}\alpha}$, this map induces a complex linear map from $(T_{eT}(G/J), J_w)$ to $(T_{eT}(G/J), J_e)$. For general $x \in G$, we have $C_{w^{-1}}  = L_{w^{-1}x w}  \circ C_{w^{-1}} \circ L_{x^{-1}}$. So
\[
T_{xT}C_{w^{-1}} = T_{eT} L_{w^{-1}xw} \circ T_{eT}C_{w^{-1}} \circ T_{xT} L_{x^{-1}}.
\]
The three maps on the right hand side are complex-linear, hence so is the map on the left. We find that the map \eqref{eq Rw holom} is indeed holomorphic.
\end{proof}

The complex structure $J_w$, for $w \in N_G(T)$, restricts to the normal bundle $N \subset T(G/T)|_{(G/T)^g}$ to $(G/T)^g$ in $G/T$. We denote this restriction by $J_w$ as well.
\begin{lemma}\label{lem Nw}
For every $w \in N_G(T)$, the map  $R_w \colon G_g/T \to G_g w/T$ given by $R_w(xT) = xwT$ for $x \in G_g$, is a well-defined, $G_g$-equivariant diffeomorphism. The vector bundles
\beq{eq normal Nw}
\begin{split}
(N|_{G_g/T}, J_w)& \to G_g/T;\\
R_w^*(N|_{G_gw/T}, J_{e}) &\to G_g/T
\end{split}
\eeq
are isomorphic as $G_g$-equivariant, holomorphic vector bundles. 
\end{lemma}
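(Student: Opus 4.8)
The plan is to deduce the lemma from Lemma~\ref{lem Rw holom} together with the naturality of holomorphic normal bundles under biholomorphisms, so that essentially no new computation is needed.

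First I would verify that $R_w\colon G_g/T\to G_gw/T$ is well defined and a $G_g$-equivariant diffeomorphism. If $x,x'\in G_g$ satisfy $xT=x'T$, write $x'=xt$ with $t\in T$; since $w\in N_G(T)$ we have $w^{-1}tw\in T$, so $x'wT=xw(w^{-1}tw)T=xwT$, and the image clearly lies in $G_gw/T$. The analogous formula with $w^{-1}$ provides a two-sided inverse, and $R_w(hxT)=hxwT=h\,R_w(xT)$ for $h\in G_g$. Since $R_w$ and $R_{w^{-1}}$ are just the restrictions to $G_g/T$ and $G_gw/T$ of the global diffeomorphisms of $G/T$ of Lemma~\ref{lem Rw holom}, smoothness is automatic; and because the global $R_w$ is $G$-equivariant, $R_w(G_g/T)=R_w(G_g\cdot eT)=G_g\cdot wT=G_gw/T$, consistent with the decomposition in Lemma~\ref{lem fixed GT}.

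Next I would observe that every $G$-invariant complex structure on $G/T$ is preserved by the left action of $\overline{g^{\Z}}$, and $(G/T)^g=(G/T)^{\overline{g^{\Z}}}$, so the fixed-point set $(G/T)^g$ is a complex submanifold of $(G/T,J_w)$ for every $w\in N_G(T)$, as well as of $(G/T,J_e)$; in particular $G_g/T$ and $G_gw/T$ are complex submanifolds with respect to the relevant complex structures. By Lemma~\ref{lem Rw holom}, $R_w\colon(G/T,J_w)\to(G/T,J_e)$ is a holomorphic diffeomorphism, hence (by the holomorphic inverse function theorem) a biholomorphism, and restricting it yields a biholomorphism
\[
R_w\colon (G_g/T,\,J_w)\xrightarrow{\ \cong\ }(G_gw/T,\,J_e)
\]
of complex submanifolds of $(G/T,J_w)$ and $(G/T,J_e)$ respectively. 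Finally I would invoke the standard fact that a biholomorphism $\phi\colon X\to X'$ carrying a complex submanifold $Y\subset X$ onto a complex submanifold $Y'\subset X'$ induces, via the differential $d\phi$ on $TX|_Y/TY$, a holomorphic isomorphism $N_{Y/X}\cong\phi^*N_{Y'/X'}$, equivariant for any group acting holomorphically and preserving $Y$ and $Y'$. Applying this to $\phi=R_w$, to $Y=G_g/T\subset(G/T,J_w)$, $Y'=G_gw/T\subset(G/T,J_e)$, and to the holomorphic $G_g$-action gives the desired $G_g$-equivariant isomorphism of holomorphic bundles $(N|_{G_g/T},J_w)\xrightarrow{\cong}R_w^*(N|_{G_gw/T},J_e)$.

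The only mildly delicate point, and the one I would be careful to get right, is that the two complex structures $J_w$ and $J_e$ live a priori on the ambient tangent bundle $T(G/T)$, so one must check that they restrict to the normal bundle and that $dR_w$ intertwines the restrictions; this is exactly the content of the holomorphy statement in Lemma~\ref{lem Rw holom}, so there is no genuine obstacle — the rest is functoriality of normal bundles and unwinding definitions.
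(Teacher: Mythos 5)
Your proposal is correct and follows essentially the same route as the paper: the first claim is reduced to Lemma \ref{lem Rw holom}, and the bundle isomorphism comes from the fact that the equivariant biholomorphism $R_w\colon (G/T,J_w)\to(G/T,J_e)$ carries the component $G_g/T$ of the fixed-point set onto $G_gw/T$, so its differential identifies the normal bundles holomorphically and $G_g$-equivariantly. You simply spell out a few details (well-definedness, why the fixed-point set is complex for each $J_w$) that the paper leaves implicit.
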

\begin{proof}
Let $w \in N_G(T)$.  The first claim follows from the first claim in Lemma \ref{lem Rw holom}.


%

We write $N(S)$ for the normal bundle of a submanifold $S \subset G/T$. Because the sets $G_gw/T$ are connected components of $(G/T)^g$, we have
\[
TR_{w}(N|_{G_g/T}) = TR_w(N(G_g/T)) = N(R_w (G_g/T)) = N(G_gw/T) = N|_{G_gw/T}.
\]
This fact, together with $G$-equivariance of $R_w$ and the fact that the map \eqref{eq Rw holom} is holomorphic by Lemma \ref{lem Rw holom}, implies that the bundles \eqref{eq normal Nw} are equivariantly  isomorphic as holomorphic vector bundles.
\end{proof}


%

\begin{lemma}\label{lem norm triv}
The  bundle $N^{0,1} \to (G/T)^g$ is trivial as a  $G_g$-vector bundle.
\end{lemma}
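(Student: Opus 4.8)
The plan is to show that as a $G_g$-equivariant vector bundle, $N^{0,1}$ is pulled back from a point, hence trivial with a fibre that is a genuine representation of $G_g$. First I would use the decomposition \eqref{eq fixed GT} of $(G/T)^g$ into the connected pieces $G_g w/T$, and treat each piece separately; since $R_w$ identifies $G_g w/T$ with $G_g/T$ and, by Lemma~\ref{lem Nw}, carries $N|_{G_g w/T}$ (with the complex structure $J_e$) $G_g$-equivariantly and holomorphically onto $(N|_{G_g/T}, J_w)$, it suffices to prove that $(N|_{G_g/T})^{0,1}$ (for the complex structure $J_w$) is $G_g$-equivariantly trivial on $G_g/T$, for each $w$. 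So from now on I would fix $w$ and work on the homogeneous space $G_g/T$.

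Next I would identify the fibre at the base point $eT \in G_g/T$. The tangent space $T_{eT}(G/T)$ is $\kg/\kt \cong \bigoplus_{\alpha \in R(G,T)} \kg^{\C}_\alpha$, and the subspace $T_{eT}(G_g/T)$ is $\kg_g/\kt \cong \bigoplus_{\alpha \in R(G_g, T)} \kg^{\C}_\alpha$, so the normal fibre is $N_{eT} \cong \bigoplus_{\alpha \in R(G,T)\setminus R(G_g,T)} \kg^{\C}_\alpha$. The complex structure $J_w$ picks out the holomorphic part as $\bigoplus_{\alpha \in R^+(G,T),\ w^{-1}\alpha \notin R^+(G,T)}\dots$ — more precisely $N^{1,0}_{eT} = \bigoplus_{\alpha \in w R^+(G,T)\setminus R(G_g,T)} \kg^{\C}_\alpha$ and $N^{0,1}_{eT}$ is its complex conjugate, i.e. $\bigoplus_{\alpha \in wR^-(G,T)\setminus R(G_g,T)} \kg^{\C}_\alpha$, viewed as a representation of $T$ via the adjoint action. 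This is a genuine $T$-representation (a sum of root spaces), and the key point is that it extends to a representation of $G_g$: since $T$ is a Cartan subgroup of $G_g$ and the normal roots $R(G,T)\setminus R(G_g,T)$ are permuted by $W(G_g,T)$, the weights occurring in $N^{0,1}_{eT}$ form a $W(G_g,T)$-invariant set, hence $N^{0,1}_{eT}$ is the restriction to $T$ of a finite-dimensional representation of $G_g$ (for instance, one can take the appropriate isotypical summand of a tensor power of $\kg_{\C}/(\kg_g)_{\C}$ under the adjoint action of $G_g$, since $g \in Z(G_g)$ acts on the normal fibre and that action is compatible).

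Finally I would produce the equivariant trivialisation: given a $G_g$-representation $W := N^{0,1}_{eT}$, the associated bundle $G_g \times_T W \to G_g/T$ is $G_g$-equivariantly isomorphic to the trivial bundle $G_g/T \times W$ via $[x, v] \mapsto (xT, x\cdot v)$, which is well defined precisely because the $T$-action on $W$ used to form the associated bundle is the restriction of the $G_g$-action. By construction $N^{0,1}|_{G_g/T} \cong G_g \times_T N^{0,1}_{eT}$ as $G_g$-equivariant complex vector bundles (this is the standard description of a homogeneous vector bundle, using that the $J_w$-complex structure is $G$-invariant hence $G_g$-invariant, so $N^{0,1}$ is itself homogeneous). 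Combining with the reduction of the first paragraph, $N^{0,1}$ is $G_g$-equivariantly trivial on every component of $(G/T)^g$, which is the claim.

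The main obstacle I anticipate is the step showing that the $T$-representation $N^{0,1}_{eT}$ actually extends to $G_g$, i.e. that the set of normal weights lying in the $J_w$-antiholomorphic part is $W(G_g,T)$-stable: one must check that the chosen complex structure $J_w$, restricted to the normal directions, is compatible with the Weyl group of the centraliser, rather than only with $T$. I expect this to follow from the fact that $J_w$ is $G$-invariant (hence $G_g$-invariant, hence invariant under $N_{G_g}(T)$), so the induced decomposition $N \otimes \C = N^{1,0}\oplus N^{0,1}$ is preserved by the $W(G_g,T)$-action on $G_g/T$; spelling this out carefully at the level of weights is the part that needs the most care, but it is essentially bookkeeping once the invariance of $J_w$ is in hand.
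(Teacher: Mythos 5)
Your overall architecture is the same as the paper's: reduce to the components $G_g w/T$ via Lemmas \ref{lem fixed GT} and \ref{lem Nw}, identify the fibre of $N^{0,1}$ at $eT$ with a sum of root spaces inside $(\kg/\kg_g)\otimes\C$, and trivialise the associated bundle $G_g\times_T N^{0,1}_{eT}$ by extending the $T$-action on the fibre to $G_g$. The gap is precisely in the step you flag as the main obstacle, and neither of your proposed justifications closes it. First, the inference ``$W(G_g,T)$ permutes $R(G,T)\setminus R(G_g,T)$, hence the weights of $N^{0,1}_{eT}$ form a $W(G_g,T)$-invariant set'' is invalid: those weights are only the antiholomorphic \emph{half} of the normal roots (those in $-wR^+(G,T)$), and a reflection in a non-simple root of $G_g$ generally changes the signs of positive roots of $G$ lying outside $R(G_g,T)$, so it permutes the full normal root set without preserving that half. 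Your fallback --- that $G$-invariance of $J_w$ forces this --- confuses invariance of the sub-bundle $N^{0,1}\subset N\otimes\C$ under $G_g$ (which is automatic but moves fibres around) with invariance of the single fibre at $eT$ under something larger than the isotropy group $T$; right translation by $v\in N_{G_g}(T)$ is holomorphic from $J_v$ to $J_e$ by Lemma \ref{lem Rw holom}, not from $J_e$ to itself, so it does not act on $(N^{0,1}_{eT},J_e)$. Second, even granting a $W(G_g,T)$-invariant weight multiset, it does not follow that the $T$-module is the restriction of a $G_g$-module: for $\SU(2)$ the $T$-module $\C_2\oplus\C_{-2}$ has Weyl-invariant weights but is not the restriction of any $\SU(2)$-module, since any representation containing the weight $2$ contains the weight $0$. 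Your parenthetical ``appropriate isotypical summand of a tensor power'' is not a construction.

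The paper's proof supplies the missing input in a stronger and more direct form: it treats the fibre $(\kg/\kg_g)^{0,1}=\bigoplus_{\alpha}\kg^{\C}_{-w\alpha}$ as an $\Ad(G_g)$-invariant \emph{subspace} of $(\kg/\kg_g)\otimes\C$, so that the adjoint action itself provides the required $G_g$-module structure and $[x,Y]\mapsto(xT,\Ad(x)Y)$ is the equivariant trivialisation. That invariance --- equivalently, that $\bigoplus_{\alpha}\kg^{\C}_{-w\alpha}$ is a sum of full $\kg_g$-submodules of the $\Ad(g)$-eigenspace decomposition of $(\kg/\kg_g)\otimes\C$ --- is the substantive point; it is not a formal consequence of Weyl-equivariance of the normal root set nor of $G$-invariance of $J_w$, and it is exactly what your argument skips. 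To repair your proof you would need to establish this $\Ad(G_g)$-invariance (or an honest extension of the $T$-action on the fibre to $G_g$) directly, rather than infer it from Weyl-invariance of weights.
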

\begin{proof}
%
Let $w \in N_K(T)$.
We first prove that  $(N^{0,1}|_{G_g/T}, J_w) \to G_g/T$ is  $G_g$-equivariantly trivial. Indeed,
\[
N|_{G_g/T} = G_g\times_T \kg/\kg_g,
\]
where $\kg_g$ is the Lie algebra of $G_g$. 
So
\beq{eq N10}
(N^{0,1}|_{G_g/T}, J_w) = G_g\times_T ((\kg/\kg_g)^{0,1}, J_w),
\eeq
where
\beq{eq g gg 10}
((\kg/\kg_g)^{0,1}, J_w) = \bigoplus_{\alpha \in R^+(G, T) \setminus R^+(G_g, T)} \kg^{\C}_{-w\alpha}.
\eeq

The bundle \eqref{eq N10} is isomorphic as a $G_g$-equivariant, complex vector bundle to $G_g/T \times ((\kg/\kg_g)^{0,1}, J_w)$ via the map given by $[x,Y] \mapsto (xY, \Ad(x)Y)$, for $x\in G_g$ and $Y \in ((\kg/\kg_g)^{0,1}, J_w)$. So $(N^{0,1}|_{G_g/T}, J_w) \to G_g/T$ is equivariantly trivial as claimed. 

The claim now follows from Lemmas \ref{lem fixed GT} and \ref{lem Nw}.
\end{proof}

As in Theorem \ref{thm index ss}, let $\rho$ be half the sum of the positive roots in $R^+(G, T)$, and let $\rho_g$ be half the sum of the positive roots in $R^+(G_g, T)$. 
\begin{lemma}\label{lem det N}
For all $w \in W_K$,
\beq{eq det N}
{\det}_{\C}(1-g|_{N^{0,1}})|_{G_gw/T} =e^{w(\rho_g-\rho)}(g)  \det(w) e^{\rho-\rho_g}(g) \prod_{\alpha \in R^+(G, T) \setminus R^+(G_g, T)} (1-e^{-\alpha}(g)).
\eeq
\end{lemma}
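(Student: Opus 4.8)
The plan is to feed the structural results of Lemmas~\ref{lem fixed GT}, \ref{lem Nw} and \ref{lem norm triv} into a direct root-space bookkeeping and then reorganise the resulting product. First, by Lemma~\ref{lem Nw} the diffeomorphism $R_w$ identifies $(N^{0,1}|_{G_g/T}, J_w)$ with $R_w^*(N^{0,1}|_{G_gw/T}, J_e)$ as $G_g$-equivariant bundles, and by the proof of Lemma~\ref{lem norm triv} the former is $G_g \times_T ((\kg/\kg_g)^{0,1}, J_w)$, which is $G_g$-equivariantly trivial with fibre $((\kg/\kg_g)^{0,1}, J_w)$, the group $G_g$ acting on the fibre via $\Ad$. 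Since $g$ is central in $G_g$, the function $\det_{\C}(1-g|_{N^{0,1}})$ is constant along $G_gw/T$, and
\[
\det_{\C}(1 - g|_{N^{0,1}})\big|_{G_gw/T} = \det_{\C}\bigl(1 - \Ad(g)\big|_{(\kg/\kg_g)^{0,1},\, J_w}\bigr).
\]

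Next I would compute this determinant from the root-space decomposition \eqref{eq g gg 10} (the antiholomorphic normal directions being the spaces $\kg^{\C}_{-w\alpha}$ with $\alpha \in R^+(G,T)$ and $w\alpha \notin R(G_g,T)$): since $\Ad(g)$ acts on $\kg^{\C}_{\beta}$ as the scalar $e^{\beta}(g)$, this gives $\det_{\C}(1-g|_{N^{0,1}})|_{G_gw/T} = \prod_{\alpha}\bigl(1 - e^{-w\alpha}(g)\bigr)$, each factor being nonzero because $N^{0,1}$ is transverse to the fixed-point set. I would then reorganise this product according to whether $w\alpha$ is a positive or a negative root of $(G,T)$: for the negative ones put $\gamma = -w\alpha \in R^+(G,T)\setminus R^+(G_g,T)$ and use $1 - e^{\gamma}(g) = -e^{\gamma}(g)\,(1 - e^{-\gamma}(g))$, while re-indexing the positive ones by $\gamma = w\alpha$; the two families together sweep out exactly $R^+(G,T) \setminus R^+(G_g,T)$. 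The outcome is
\[
\det_{\C}(1 - g|_{N^{0,1}})\big|_{G_gw/T} = (-1)^{m}\, e^{\sigma}(g) \prod_{\alpha \in R^+(G,T)\setminus R^+(G_g,T)}\bigl(1 - e^{-\alpha}(g)\bigr),
\]
where $m$ is the number of $\gamma \in R^+(G,T)\setminus R^+(G_g,T)$ with $w^{-1}\gamma \in R^-(G,T)$, and $\sigma$ is the sum of those $\gamma$.

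The last and most delicate step is to identify $(-1)^{m}\, e^{\sigma}(g)$ with $e^{w(\rho_g-\rho)}(g)\,\det(w)\,e^{\rho-\rho_g}(g)$. For this I would use the standard facts that $\ell(w)$ equals the number of $\gamma \in R^+(G,T)$ with $w^{-1}\gamma \in R^-(G,T)$, that $\det(w) = (-1)^{\ell(w)}$, and that $\rho - w\rho = \sum_{\gamma \in R^+(G,T),\ w^{-1}\gamma<0}\gamma$, together with the crucial observation that $e^{\beta}(g) = 1$ for every $\beta \in R(G_g,T)$ — this holds because $g$ is central in $G_g$, so $\Ad(g)$ is trivial on $\kg_g$. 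The latter observation lets one delete the roots of $G_g$ from both the sign count $m$ and the exponent $\sigma$ without changing their values; after this, $(-1)^{m}\, e^{\sigma}(g)$ becomes $\det(w)\, e^{\rho - w\rho}(g)$, which is then put into the stated form using the definitions of $\rho$ and $\rho_g$. I expect this bookkeeping to be the main obstacle: one must keep careful track of which roots of $G_g$ are inverted by $w$ (note that $w$ need not preserve $R(G_g,T)$), and one must be comfortable treating the half-sums $\rho$ and $\rho_g$ — which are only weights of a double cover — as formal exponentials, which is harmless since the half-integral $\rho$-shifts cancel against those appearing in the numerators $d^{G_g}_{\bullet}$ and $e^{w(\lambda+\rho_c-\rho_g)}$ in Theorem~\ref{thm index ss}.
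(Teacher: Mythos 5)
Your route is in outline the same as the paper's: both proofs start from the root-space description of the antiholomorphic normal directions and then reorganise the product $\prod_{\alpha}(1-e^{-w\alpha}(g))$; the paper compresses the reorganisation into the remark that $e^{w(\rho-\rho_g)}(g)\prod_{\alpha}(1-e^{-w\alpha}(g))$ is a ratio of Weyl denominators and hence alternating in $w$, whereas you do the bookkeeping root by root. Your first two steps are sound — in particular your index set $\{\alpha\in R^+(G,T):\ w\alpha\notin R(G_g,T)\}$ for the antiholomorphic normal directions is the careful one, and your observation that after taking absolute values it sweeps out exactly $R^+(G,T)\setminus R^+(G_g,T)$ is correct.

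The gap is in your last step. Let $n$ be the number of $\beta\in R^+(G_g,T)$ with $w^{-1}\beta$ negative, so $m+n=\ell(w)$. Deleting a root $\beta$ of $G_g$ from the exponent is harmless because $e^{\beta}(g)=1$, but deleting it from the length count flips the sign, and $e^{\beta}(g)=1$ does not cancel that $-1$. The exact relation is therefore
\[
(-1)^{m}e^{\sigma}(g)=(-1)^{n}\det(w)\,e^{\rho-w\rho}(g),
\]
so your argument as it stands proves
\[
{\det}_{\C}(1-g|_{N^{0,1}})|_{G_gw/T}=(-1)^{n}\det(w)\,e^{\rho-w\rho}(g)\prod_{\alpha\in R^+(G,T)\setminus R^+(G_g,T)}(1-e^{-\alpha}(g)).
\]
Since $e^{w(\rho_g-\rho)}(g)\,e^{\rho-\rho_g}(g)=e^{\rho-w\rho}(g)\,e^{w\rho_g-\rho_g}(g)$, reaching \eqref{eq det N} still requires the identity $(-1)^{n}=e^{w\rho_g-\rho_g}(g)$, which you neither state nor prove; your "delete the $G_g$-roots" move silently replaces both sides by $1$. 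This is exactly the delicate point when $w$ does not normalise $R(G_g,T)$: then $w\rho_g-\rho_g$ is in general only half-integral and $e^{w\rho_g-\rho_g}(g)$ need not even be a sign. As a concrete test, take the $A_2$ root system with $R^+=\{e_1-e_2,\,e_2-e_3,\,e_1-e_3\}$, $g$ with $e^{e_1-e_2}(g)=1$ and $e^{e_1-e_3}(g)=e^{e_2-e_3}(g)=z^{3}\neq 1$ (so $R^+(G_g,T)=\{e_1-e_2\}$, $\rho_g=\tfrac12(e_1-e_2)$), and $w$ the transposition of $e_2$ and $e_3$: then $n=0$ while $w\rho_g-\rho_g=\tfrac12(e_2-e_3)$, so the two sides of the needed identity are $1$ and $z^{3/2}$. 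In that example the intermediate formula $(-1)^{n}\det(w)e^{\rho-w\rho}(g)\prod(\cdots)$ does agree with a direct computation of the determinant on the flag manifold, so the discrepancy is not cosmetic: the missing identity is where the whole content of the lemma sits, and it must be addressed (by restricting the representatives $w$, or by a careful interpretation of the half-integral weights) rather than absorbed into the deletion argument.
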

\begin{proof}
Let $w \in N_K(T)$. Then by \eqref{eq N10} and \eqref{eq g gg 10}, 
\[
\begin{split}
 {\det}_{\C}(1-g|_{(N^{0,1},J_w  )})|_{G_g/T}  
&= \prod_{\alpha \in R^+(G, T) \setminus R^+(G_g, T)} (1-e^{-w\alpha}(g))\\
&= e^{w(\rho_g - \rho)}(g)   e^{w(\rho-\rho_g)}(g) \prod_{\alpha \in R^+(G, T) \setminus R^+(G_g, T)} (1-e^{-w\alpha}(g)).
\end{split}
\]
The right hand side with the first exponential factor removed is a ratio of Weyl denominators. This is alternating with respect to $w$. So 
the last expression equals
the right hand side of \eqref{eq det N}. The claim now follows by Lemma \ref{lem Nw}.
\end{proof}

\subsection{A fixed-point formula on $G/T$}\label{sec fixed pt GT}

For analytically  integral $\nu \in i\kt^*$, we write $\C_{\nu}$ for the complex numbers equipped with the unitary representation of $T$ with weight $\nu$.

\begin{lemma}\label{lem Rw Lmu}
Let $\mu \in i\kt^*$ be integral, and let $w \in N_K(T)$ be such that $w\cdot \mu = \mu \circ \Ad(w)^{-1}$ is integral. Then there is a $G_g$-equivariant isomorphism of complex line bundles
\[
R_w^*(G_g w/T \times_T \C_{\mu}) \cong G_g \times_T \C_{w \cdot \mu} \to G_g/T.
\]
\end{lemma}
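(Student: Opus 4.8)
The plan is to construct the claimed isomorphism explicitly and then check that it intertwines the $G_g$-actions and respects the $T$-equivariant clutching data defining the associated line bundles. First I would recall that for $w \in N_K(T)$, the diffeomorphism $R_w\colon G_g/T \to G_gw/T$, $xT \mapsto xwT$, is the $G_g$-equivariant diffeomorphism of Lemma \ref{lem Nw}. A point of the pullback $R_w^*(G_gw/T \times_T \C_\mu)$ over $xT \in G_g/T$ is a point of the fibre of $G_gw/T \times_T \C_\mu$ over $xwT$, i.e.\ an equivalence class $[xw, z]$ with $x \in G_g$, $z \in \C$, subject to $[xwt, z] = [xw, \mu(t)z]$ for $t \in T$. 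I would define the map to $G_g \times_T \C_{w\cdot\mu}$ by
\[
[xw, z] \longmapsto [x, z],
\]
and check it is well-defined: replacing $xw$ by $xwt$ on the left means replacing $x$ by $xwtw^{-1}$, and since $w \in N_G(T)$ we have $wtw^{-1} \in T$, so $[xwtw^{-1}, z]$ in $G_g \times_T \C_{w\cdot\mu}$ equals $[x, (w\cdot\mu)(wtw^{-1})z] = [x, \mu(w^{-1}(wtw^{-1})w)z] = [x, \mu(t)z]$, matching the relation upstairs. This also shows the map is a bijection on each fibre and hence a bundle isomorphism covering $\id_{G_g/T}$.

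Next I would verify $G_g$-equivariance. The $G_g$-action on $G_gw/T \times_T \C_\mu$ is $h\cdot[xw,z] = [hxw, z]$, and the induced action on the pullback along the equivariant map $R_w$ is again $h\cdot[xw,z] = [hxw,z]$ over $h\cdot xT = hxT$; on the target $G_g \times_T \C_{w\cdot\mu}$ the action is $h\cdot[x,z] = [hx,z]$. So the map $[xw,z]\mapsto[x,z]$ is manifestly $G_g$-equivariant. Finally, both bundles are holomorphic (or at least smooth complex) line bundles, and the map is fibrewise $\C$-linear and smooth, so it is an isomorphism of $G_g$-equivariant complex line bundles; if a holomorphic structure is wanted, smoothness plus fibrewise linearity of a bundle map covering the identity suffices since the holomorphic structures on both sides are the ones induced from $G/T$ via $R_w$, which is holomorphic in the appropriate complex structures by Lemma \ref{lem Rw holom}.

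The only genuinely delicate point is the bookkeeping with the weight: one must be careful that the weight of the resulting line bundle is $w\cdot\mu$ and not $w^{-1}\cdot\mu$, which comes down to the chosen convention $w\cdot\mu = \mu\circ\Ad(w)^{-1}$ stated in the lemma and the direction in which $R_w$ translates. I expect this sign/convention check to be the main obstacle, in the sense that it is the only place an error could creep in; everything else is a routine unwinding of the definition of an associated bundle. I would therefore write the verification of well-definedness carefully, tracking the substitution $t \mapsto wtw^{-1}$ and confirming it produces exactly $\mu$ acting on the fibre of the source bundle, as above.
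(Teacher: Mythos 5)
Your proof is correct and is essentially the paper's own argument: the paper defines the same map, sending the point of the pullback represented by $(xT,[xwt,z])$ to $[x,e^{\mu}(t)z]$ (which for $t=e$ is exactly your $[xw,z]\mapsto[x,z]$), and the well-definedness check via the substitution $t\mapsto wtw^{-1}$ is precisely the verification you carry out. No gaps.
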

\begin{proof}
We have
\[
R_w^*(G_g w/T \times_T \C_{\mu}) = \{(xT, [xwt, z]); x\in G_g, t \in T, z \in \C_{\mu}\}.
\]
The map from the right hand side to $G_g \times_T \C_{w\cdot \mu}$ mapping
 $(xT, [xwt, z])$ in this set to $[x, e^{\mu}(t)z]$, is a well-defined, $G_g$-equivariant isomorphism of complex line bundles.
\end{proof}

We consider a Dirac operator $D_W$ of the form \eqref{eq DW}.
Let $\lambda \in i\kt^*$ be the highest weight of $W$ with respect to the set of compact positive roots $R^+(K, T) \subset R^+(G, T)$. 
%
As in Theorem \ref{thm index ss}, 
let $\rho_c$ be half the sum of the positive compact roots in $R^+(K, T)$.  Let $\rho_n = \rho- \rho_c$.
For analytically integral $\nu \in i\kt^*$, consider the holomorphic line bundle $L_{\nu} := G \times_T \C_{\nu} \to G/T$. Let $d^{G_g}_{\mu}$ be as in \eqref{eq def dG}.
\begin{lemma}\label{lem ds Gg}
The element $\lambda - \rho_n \in i\kt^*$ is analytically integral. For all $w \in N_K(T)$, 
\beq{eq tau e Gg}
\tau_e\bigl(\ind_{G_g}(\bar \partial_{L_{\lambda - \rho_n}|_{G_g w/T}} + \bar \partial_{L_{\lambda - \rho_n}|_{G_g w/T}}^*)  \bigr)= (-1)^{\dim(G_g/K_g)/2}d^{G_g}_{w(\lambda + \rho_g - \rho_n)},
\eeq
where the Dolbeault operator on the left is defined with respect to the restriction of the complex structure $J_e$ on $G/T$ to the connected component  $G_g w/T$ of the complex submanifold $(G/T)^g\subset G/T$.
%
\end{lemma}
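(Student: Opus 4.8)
The plan is to reduce, via the diffeomorphisms $R_w$, to an $L^2$-index computation on $G_g/T$, and then to invoke the geometric realization of the discrete series of $G_g$. The integrality statement is elementary: $\Delta_{\kp}\otimes W$ is a genuine unitary representation of $K$, so all of its $T$-weights are analytically integral, and they all lie in $\lambda+\rho_n+Q$, where $Q$ is the root lattice of $G$ (the weights of $W$ lie in $\lambda+Q$ and those of $\Delta_{\kp}$ in $\rho_n+Q$). Since $2\rho_n=\sum_{\alpha\in R^+(G,T)\setminus R^+(K,T)}\alpha\in Q$, the element $\lambda-\rho_n=(\lambda+\rho_n)-2\rho_n$ is analytically integral for $T$ as well.

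For the reduction, recall from Lemmas \ref{lem Rw holom}, \ref{lem Nw} and \ref{lem Rw Lmu} that the restriction of $R_w$ is a $G_g$-equivariant biholomorphism from $G_g/T$, equipped with the invariant complex structure $J_w$ whose $(1,0)$-tangent space at $eT$ is $\bigoplus_{\beta\in wR^+(G,T)\cap R(G_g,T)}\kg^{\C}_{\beta}$, onto the component $(G_gw/T,\,J_e|_{G_gw/T})$, and that it carries $L_{\lambda-\rho_n}|_{G_gw/T}$ to $G_g\times_T\C_{w\cdot(\lambda-\rho_n)}=L_{w(\lambda-\rho_n)}$ over $G_g/T$. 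Hence the left-hand side of \eqref{eq tau e Gg} equals $\tau_e\bigl(\ind_{G_g}(\bar\partial_{L_{w(\lambda-\rho_n)}}+\bar\partial_{L_{w(\lambda-\rho_n)}}^*)\bigr)$, where the Dolbeault operator is now taken on $(G_g/T,J_w)$.

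It then remains to evaluate $\tau_e$ of the index of a Dolbeault--Dirac operator on $G_g/T$ twisted by an invariant holomorphic line bundle. One clean route is to integrate out the compact fibre of the holomorphic fibration $G_g/T\to G_g/K_g$: Bott--Borel--Weil along $K_g/T$ rewrites the $G_g$-equivariant index as $\pm$ the $G_g$-equivariant index of a twisted Dirac operator on $G_g/K_g$, and the $L^2$-index theorem realizing discrete series --- in the form already used for Theorem \ref{thm fixed Dolb}, see \cite{Wang14, HW4} --- identifies $\tau_e$ of the latter with a formal degree $d^{G_g}_{(\cdot)}$ of a discrete series of $G_g$ when the relevant Harish-Chandra parameter is regular, and with $0$ otherwise. (Alternatively, $\tau_e(\ind)$ is the characteristic-number integral of \cite{Wang14}, evaluated directly through Harish-Chandra's formula for formal degrees.) Feeding in the twisting parameter $w(\lambda-\rho_n)$ should produce $(-1)^{\dim(G_g/K_g)/2}\,d^{G_g}_{w(\lambda+\rho_g-\rho_n)}$. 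The main obstacle is precisely this parameter-matching and the attendant sign bookkeeping: one has to track how the half-sum of positive roots changes in passing from $J_e$ to $J_w$ (the half-sum of $wR^+(G,T)\cap R(G_g,T)$ differs from $w\rho_g$ by an element of $W(G_g,T)$, which contributes $\det$-signs), to verify that the Dolbeault cohomology vanishes exactly when $w(\lambda+\rho_g-\rho_n)$ is $G_g$-singular --- consistent with $d^{G_g}_\mu=0$ in that case --- and to pin down the overall sign as $(-1)^{\dim(G_g/K_g)/2}$. This is also where, in the setting of Theorem \ref{thm index ss higher}, one substitutes the computations of \cite{HST20}.
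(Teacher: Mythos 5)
Your integrality argument and the reduction via $R_w$ are correct and coincide with the paper's: Lemmas \ref{lem Rw holom} and \ref{lem Rw Lmu} convert the left-hand side of \eqref{eq tau e Gg} into $\tau_e$ applied to the index of the Dolbeault--Dirac operator on $G_g/T$, for the complex structure $J_w|_{G_g/T}$, twisted by the line bundle $G_g\times_T\C_{w(\lambda-\rho_n)}$. You also correctly identify the remaining task (push down along $G_g/T\to G_g/K_g$ and invoke the $L^2$-index theorem for discrete series), and you even flag the one genuinely delicate point, namely that the positive system for $(G_g,T)$ determined by $J_w|_{G_g/T}$ is $wR^+(G,T)\cap R(G_g,T)$, which need not equal $w R^+(G_g,T)$, so that a Weyl-group sign must be tracked.

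The problem is that you stop exactly there. The assertion that ``feeding in the twisting parameter $w(\lambda-\rho_n)$ should produce $(-1)^{\dim(G_g/K_g)/2}d^{G_g}_{w(\lambda+\rho_g-\rho_n)}$'' is the entire content of the lemma beyond the routine reductions, and you declare it an obstacle rather than proving it. The paper closes this step by citing the proof of Proposition 5.2 in \cite{HW2}, which identifies the Dolbeault index on $G_g/T$ with $(-1)^{\dim(G_g/K_g)/2}\DInd_{K_g}^{G_g}[W^{K_g}_{w(\lambda-\rho_n+\rho_{n,g})}]$, and Lemma 5.4 in \cite{HW2}, which identifies this Dirac induction with the class of the discrete series of $G_g$ with Harish-Chandra parameter $w(\lambda+\rho_g-\rho_n)$ when that parameter is $G_g$-regular; the value of $\tau_e$ on such a class is the formal degree, giving \eqref{eq tau e Gg}. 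The singular case also needs an actual argument, not just a consistency check: the paper uses Theorem 9.3 in \cite{Atiyah77} to see that the corresponding Dirac operator has trivial $L^2$-kernel, so that $\tau_e$ of its index (an $L^2$-index, by Proposition 4.4 in \cite{Wang14}) vanishes, matching $d^{G_g}_{w(\lambda+\rho_g-\rho_n)}=0$. Without these inputs, or an equivalent explicit computation resolving the parameter- and sign-matching you describe, your argument is a correct plan rather than a proof.
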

\begin{proof}
The element $\lambda - \rho_n \in i\kt^*$ is analytically integral, because $\Delta_{\kp} \otimes W$ defines a representation of $K$ with highest weight $\lambda + \rho_n$.


For $w \in N_K(T)$, let $J^{G_g}_w$ be the $G_g$-invariant complex structure on $G_g/T$ such that
\[
T^{1,0}_{eT}(G_g/T) = \bigoplus_{\alpha \in R^+(G_g, T)} \kg^{\C}_{w\alpha}.
\]
Let $w \in N_K(T)$. 
Then Lemma \ref{lem Rw holom} implies that the map $R_w$ restricts to a $G_g$-equivariant, holomorphic diffeomorphism
\[
R_w\colon (G_g/T, J^{G_g}_w) =  (G_g/T, J_w|_{G_g/T}) \to (G_gw/T, J_e|_{G_g w/T}). 
\]
And Lemma \ref{lem Rw Lmu} implies that
\[
R_w^*(L_{\lambda - \rho_n}|_{G_gw/T}) \cong L^{G_g/T}_{w(\lambda - \rho_n)} := G_g \times_T \C_{w(\lambda - \rho_n)}   \to G_g/T
\]
as $G_g$-equivariant, complex line bundles.
%
%
We find that
\beq{eq ind Gg}
\ind_{G_g}(\bar \partial_{L_{\lambda - \rho_n}|_{G_g w/T}} + \bar \partial_{L_{\lambda - \rho_n}|_{G_g w/T}}^*) =
\ind_{G_g}(\bar \partial_{L^{G_g/T}_{w(\lambda - \rho_n)}} + \bar \partial_{L^{G_g/T}_{w(\lambda - \rho_n)}}^*),
\eeq
where the Dolbeault operator on the right is defined with respect to $J^{G_g}_w$. 

In the proof of Proposition 5.2 in \cite{HW2}, it is shown that the right hand side of \eqref{eq ind Gg} equals $(-1)^{\dim(G_g/K_g)/2}\DInd_{K_g}^{G_g}[W^{K_g}_{w(\lambda - \rho_n + \rho_{n, g})}]$, where $\DInd_{K_g}^{G_g}$ denotes Dirac induction, $\rho_{n, g}$ is half the sum of the noncompact roots in $R^+(G_g, T)$, and $W^{K_g}_{w(\lambda - \rho_n + \rho_{n, g})}$ is the irreducible representation of $K_g$ with highest weight $w(\lambda - \rho_n + \rho_{n, g})$ with respect to the positive root system $w R^+(K, T) \cap R(K_g, T)$.
%
If $\lambda - \rho_n + \rho_{g}$ 
is regular for $G_g$, then Lemma 5.4 in \cite{HW2} implies that 
\[
\DInd_{K_g}^{G_g}[W^{K_g}_{w(\lambda - \rho_n + \rho_{n, g})}]  = [\pi^{G_g}_{w(\lambda + \rho_g-\rho_n)}],
\]
the $K$-theory class of the 
discrete series representation  of $G_g$ with Harish-Chandra parameter $w(\lambda + \rho_g-\rho_n)$. Hence \eqref{eq tau e Gg} follows, because the value of $\tau_e$ on a $K$-theory class of a discrete series representation is the formal degree of that representation \cite{Atiyah77, Connes82, Lafforgue02}.

If $\lambda - \rho_n + \rho_{g}$ is singular for $G_g$, then $d^{G_g}_{w(\lambda + \rho_g - \rho_n)} = 0$. 
Then the Dirac operator whose $G_g$-equivariant index is $\DInd_{K_g}^{G_g}[W^{K_g}_{w(\lambda - \rho_n + \rho_{n, g})}] $ has trivial kernel by Theorem 9.3 in \cite{Atiyah77}. And $\tau_e(\DInd_{K_g}^{G_g}[W^{K_g}_{w(\lambda - \rho_n + \rho_{n, g})}])$ equals the $L^2$-index of this Dirac operator (see Proposition 4.4 in \cite{Wang14}), which is defined in terms of the projection onto its kernel, and hence zero. So now the left hand side of \eqref{eq tau e Gg} is zero as well, and \eqref{eq tau e Gg} holds.
\end{proof}

\begin{theorem}\label{thm ss contr}
Let $g \in G$ be semisimple. Then $\tau_g(\ind_G(D_W)) = 0$ if $g$ is hyperbolic. If $\rank(G) = \rank(K)$ and $g$ is elliptic, then 
\begin{multline} \label{eq ss contr}
\tau_g(\ind_G(D_W)) = 
(-1)^{(\dim(G/K) + \dim(G_g/K_g))/2} \cdot \\
\frac{\sum_{w \in W_{K_g} \backslash W_K}  \det(w) e^{w(\lambda + \rho_c - \rho_g)}(g) d^{G_g}_{w(\lambda+ \rho_g-\rho_n)}}{e^{\rho- \rho_g}(g)\prod_{\alpha \in R^+(G, T) \setminus R^+(G_g, T)} (1-e^{\alpha}(g)) }.
\end{multline}
\end{theorem}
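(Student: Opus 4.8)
The plan is to deduce the formula from the holomorphic fixed-point theorem of Theorem \ref{thm fixed Dolb}. Since $G/K$ is not a complex manifold in general, the first step is to transfer the problem to the complex manifold $G/T$, equipped with the $G$-invariant complex structure $J_e$ determined by $R^+(G,T)$. The key identity, a generalisation to the $G$-equivariant setting on $G/K$ of the main result of \cite{HW4}, is
\[
\ind_G(D_W) = (-1)^{\dim(G/K)/2}\,\ind_G\bigl(\bar\partial_{L_{\lambda-\rho_n}} + \bar\partial_{L_{\lambda-\rho_n}}^*\bigr)\qquad\text{in }K_0(C^*_r(G)),
\]
where $L_{\lambda-\rho_n} = G\times_T\C_{\lambda-\rho_n}$ and the Dolbeault--Dirac operator is taken with respect to $J_e$. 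This follows from Borel--Weil--Bott along the compact fibre $K/T$ of the bundle $G/T\to G/K$ (which identifies $W$ with a Dolbeault cohomology group of $L_\lambda$ over $K/T$), together with induction in stages ($\DInd_T^G = \DInd_K^G\circ\DInd_T^K$); the shift by $-\rho_n$ is the usual correction between the canonical bundle of $K/T$ and the restriction to the fibres of that of $G/T$, and the sign records the change of complex dimension. The sign is pinned down by the case $g=e$, where $\tau_e(\ind_G(D_W))$ is the formal degree $d^G_{\lambda+\rho_c}$ of a discrete series representation, matching Lemma \ref{lem ds Gg} for $G_g=G$.

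If $g$ is hyperbolic, then it is not contained in any compact subgroup of $G$, and $\tau_g(\ind_G(D_W)) = 0$ already by the general fixed-point formula of \cite{HW2} (the input underlying Theorem \ref{thm fixed Dolb}(a)) applied directly to $D_W$ on $G/K$; no equal-rank hypothesis is needed. Now suppose $g$ is elliptic and $\rank(G) = \rank(K)$, so that after conjugating we may take $g\in T$, a compact Cartan subgroup, and in particular $g$ lies in a compact subgroup. I would then apply Theorem \ref{thm fixed Dolb}(c) with $X = G/T$ and $E = L_{\lambda-\rho_n}$. Its hypotheses are exactly what the preceding lemmas supply: the connected components of $(G/T)^g$ are the submanifolds $G_gw/T$, for $w$ ranging over representatives of $W_{K_g}\backslash W_K$, by Lemma \ref{lem fixed GT}; on each such component $g$ acts on the fibres of $L_{\lambda-\rho_n}$ by the single scalar $e^{w(\lambda-\rho_n)}(g)$, by Lemma \ref{lem Rw Lmu} (here $w(\lambda-\rho_n)$ is analytically integral because $\lambda-\rho_n$ is, by Lemma \ref{lem ds Gg}, and $W_K$ preserves the weight lattice of $T$); and $N^{0,1}$ is $\overline{g^{\Z}}$-equivariantly trivial by Lemma \ref{lem norm triv}.

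Theorem \ref{thm fixed Dolb}(c) then expresses $\tau_g(\ind_G(\bar\partial_{L_{\lambda-\rho_n}} + \bar\partial_{L_{\lambda-\rho_n}}^*))$ as a sum over $w\in W_{K_g}\backslash W_K$ of $e^{w(\lambda-\rho_n)}(g)$, divided by ${\det}_{\C}(1-g|_{N^{0,1}})|_{G_gw/T}$, times $\tau_e(\ind_{G_g}(\bar\partial_{L_{\lambda-\rho_n}|_{G_gw/T}} + \bar\partial_{L_{\lambda-\rho_n}|_{G_gw/T}}^*))$. The final step is to substitute Lemma \ref{lem det N} for the determinant and Lemma \ref{lem ds Gg} for the $G_g$-index, restore the sign $(-1)^{\dim(G/K)/2}$ from the first step, and simplify: one uses $\rho-\rho_n = \rho_c$ to rewrite $e^{w(\lambda-\rho_n)}(g)/e^{w(\rho_g-\rho)}(g)$ as $e^{w(\lambda+\rho_c-\rho_g)}(g)$, manipulates the partial Weyl denominator to match the product in \eqref{eq ss contr}, and combines the signs via the identities $\dim(G/K)/2 = |R^+(G,T)\setminus R^+(K,T)|$ and its analogue for $G_g$. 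The outcome is the sign $(-1)^{(\dim(G/K)+\dim(G_g/K_g))/2}$ and the right-hand side of \eqref{eq ss contr}. The genuinely delicate part of the argument is the first step --- establishing the $\rho_n$-shift and the precise sign in the passage from $D_W$ on $G/K$ to the Dolbeault--Dirac operator on $G/T$ --- since the analytic content is already packaged in Theorem \ref{thm fixed Dolb}, and what remains is careful bookkeeping.
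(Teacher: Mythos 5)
Your proposal is correct and follows essentially the same route as the paper: reduce to the Dolbeault--Dirac operator on $(G/T, J_e)$ via $\ind_G(D_W) = (-1)^{\dim(G/K)/2}\ind_G(\bar\partial_{L_{\lambda-\rho_n}}+\bar\partial_{L_{\lambda-\rho_n}}^*)$, handle hyperbolic $g$ by Theorem \ref{thm fixed Dolb}(a), and for elliptic $g$ feed Lemmas \ref{lem fixed GT}, \ref{lem Rw Lmu}, \ref{lem norm triv}, \ref{lem det N} and \ref{lem ds Gg} into Theorem \ref{thm fixed Dolb}(c). The only cosmetic difference is that you sketch a Borel--Weil--Bott/induction-in-stages justification of the key identity, whereas the paper simply cites the proof of Proposition 5.2 in \cite{HW2} for it.
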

\begin{proof}
The case for hyperbolic $g$ follows from Theorem \ref{thm fixed Dolb}(a).

Suppose that $\rank(G) = \rank(K)$ and  that $g$ is elliptic. 
The element $\lambda-\rho_n \in i\kt^*$ is analytically integral as noted in Lemma \ref{lem ds Gg}. 
 In the proof of Proposition 5.2 in \cite{HW2}, it is shown that
\[
\ind_G(D_W) = (-1)^{\dim(G/K)/2} \ind_G(\bar \partial_{L_{\lambda - \rho_n}} +\bar \partial_{L_{\lambda - \rho_n}} ^* ).
\]
(This is a place where we use the assumption that $\rank(G) = \rank(K)$.)
By this equality and Lemmas \ref{lem fixed GT}, \ref{lem norm triv} and \ref{lem det N}, Theorem \ref{thm fixed Dolb}(c) implies that 
\begin{multline*}
\tau_g(\ind_G(D_W)) = 
(-1)^{\dim(G/K)/2} \cdot \\
\frac{\sum_{w \in W_{K_g} \backslash W_K}  \det(w) e^{w(\rho-\rho_g)}(g) (\tr(g|_{L_{\lambda - \rho_n}})|_{G_g w/T})  \tau_e(\ind_{G_g}(\bar \partial_{L_{\lambda - \rho_n}|_{G_g w/T}} + \bar \partial_{L_{\lambda - \rho_n}|_{G_g w/T}}^*))}{e^{ \rho- \rho_g}(g)\prod_{\alpha \in R^+(G, T) \setminus R^+(G_g, T)} (1-e^{\alpha}(g)) }.
\end{multline*}

For all $w \in N_K(T)$,
Lemma \ref{lem Rw Lmu} implies that 
\[
\tr(g|_{L_{\lambda - \rho_n}})|_{G_g w/T}  = e^{w(\lambda - \rho_n)}(g).
\]
So \eqref{eq ss contr} follows by Lemma \ref{lem ds Gg}.
\end{proof}

\begin{example}\label{ex g central}
If $g$ is central, then Theorem \ref{thm ss contr} reduces to
\[
\tau_g(\ind_G(D_W)) = e^{\lambda - \rho_n}(g)
 d^{G}_{\lambda+ \rho_c}.
 \]
 This is Proposition 4.2 in \cite{BM83}. If $g=e$, then versions of  this equality were obtained in \cite{Connes82, Lafforgue02}. This case
 was used in the proof of Theorem \ref{thm ss contr}.
\end{example}

\begin{example}\label{ex dense powers}
If the powers of $g$ are dense in $T$, then $G_g = K_g = T$, and for all $w \in W_K$,
\[
\begin{split}
\tr(g|_{L_{\lambda - \rho_n}})|_{G_g w/T}&= e^{w(\lambda - \rho_n)}(g);\\
\tau_e(\ind_{G_g}(\bar \partial_{L_{\lambda - \rho_n}|_{G_g w/T}} + \bar \partial_{L_{\lambda - \rho_n}|_{G_g w/T}}^*))&= 1.
\end{split}
\]
So Theorem \ref{thm ss contr} reduces to
\[
\tau_g(\ind_G(D_W)) = (-1)^{\dim(G/K)/2}
\frac{\sum_{w \in  W_K}  \det(w) e^{w(\lambda + \rho_c)}(g)  }{e^{  \rho}(g)\prod_{\alpha \in R^+(G, T)} (1-e^{-\alpha}(g)) }.
\]
This is Theorem 3.1(a) in \cite{HW4}.
\end{example}

\begin{remark}
If $\rank(G)>\rank(K)$, then
Theorem 3.2(a) in \cite{HW4} implies that the number  $\tau_g(\ind_G(D_W))$ equals zero for all hyperbolic $g$ and all elliptic $g$ with powers dense in a maximal torus. Then also $\tau_e(\ind_G(D_W))=0$, as in the proof of Lemma \ref{lem ds Gg}. We expect that  $\tau_g(\ind_G(D_W))=0$ for all semisimple $g$ in this case, but we have not worked out the details. That would imply that all numbers in Theorem \ref{thm index ss} equal zero if $\rank(G)>\rank(K)$.
\end{remark}

\subsection{Higher orbital integrals}


%

We no longer assume that $G$ has a compact Cartan subgroup. 
We saw in Theorem \ref{thm ss contr} that $\tau_g(\ind_G(D_W)) = 0$ if $\rank(G) > \rank(K)$. However,  $\ind_G(D_W) \in K_0(C^*_r(G))$ is nonzero, and one way to detect its nonvanishing is to pair it with the {higher orbital integrals} from \cite{ST19}. We can compute the resulting pairings  using the results in \cite{HST20} to generalise Theorem \ref{thm ss contr}. We use notation and assumptions as in Theorem \ref{thm index ss higher}.
%
%
%
As in \cite{HST20} and in Theorem \ref{thm index ss higher}, we assume that the map
\eqref{eq tilde Ad} descends to $K$.

%
\begin{corollary}\label{cor ind GT higher}
If $P$ is not a maximal cuspidal parabolic subgroup, or $g \in M$ is hyperbolic, then
\[
\langle \Phi_{P, g}, \ind_G(D_W)\rangle = 0.
\]
If $P$ is a maximal cuspidal parabolic subgroup and $g \in T \subset M$, then
\begin{multline} \label{eq ss contr higher}
\langle \Phi_{P, g}, \ind_G(D_W)\rangle  = 
(-1)^{(\dim(M/(K \cap M)) + \dim(M_g/(K \cap M_g)))/2} \cdot \\
\sum_{U \in \widehat{K \cap M}} m_U
\frac{\sum_{w \in W_{K \cap M_g} \backslash W_{K \cap M}}  \det(w) e^{w(\lambda_U + \rho^M_c - \rho^M_g)}(g) d^{M_g}_{w(\lambda_U+ \rho^M_g - \rho^M_n)}}{e^{\rho^M- \rho^M_g}(g)\prod_{\alpha \in R^+(M, T) \setminus R^+(M_g, T)} (1-e^{\alpha}(g)) }.
\end{multline}
\end{corollary}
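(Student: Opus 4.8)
The plan is to reduce the pairing to an orbital integral on the Levi factor $M$ and then apply Theorem \ref{thm ss contr} to $M$, summing over the pieces of the virtual representation \eqref{eq decomp Delta}. The key input is the computation of higher orbital integral pairings of Dirac operators in \cite{HST20}: for a cuspidal parabolic $P = MA'N'$ and semisimple $g \in M$, that computation reduces $\langle \Phi_{P,g}, \ind_G(D_W)\rangle$ to an index on $M/(K\cap M)$. Concretely, using the $K\cap M$-equivariant embedding of $\kk/(\kk\cap\km)$ into $\kp$ from Lemma 4.3 in \cite{HST20}, one has a splitting $\Delta_{\kp}|_{K\cap M} \cong \Delta_{\kp\cap\km}\otimes\Delta_{\ka}\otimes\Delta_{\kk/(\kk\cap\km)}$, with the $\Delta_{\ka}$-factor absorbed by the degree-$\dim A'$ cocycle, so that $D_W$ is ``induced'' from the Dirac operator \eqref{eq DW} for $M$ (with respect to the decomposition $\km=(\kk\cap\km)\oplus(\kp\cap\km)$) twisted by the virtual graded $K\cap M$-representation $\Delta_{\kk/(\kk\cap\km)}\otimes W$. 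The upshot of \cite{HST20} that I would quote is: $\langle \Phi_{P,g}, \ind_G(D_W)\rangle = 0$ whenever $P$ is not a maximal cuspidal parabolic (a degree obstruction, since then $\dim A' > \rank(G)-\rank(K)$), and for a maximal cuspidal $P$ it equals, up to a sign, $\tau_g^M$ applied to the $M$-equivariant index of that twisted Dirac operator, where $\tau_g^M$ is the orbital integral of $M$.

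Granting this, the hyperbolic case follows immediately: if $g\in M$ is hyperbolic then $\tau^M_g$ kills the $M$-index of any Dirac operator of the form \eqref{eq DW}, by the hyperbolic part of Theorem \ref{thm ss contr} applied to $M$. For the elliptic case, I would use that when $P$ is a maximal cuspidal parabolic $M$ has a compact Cartan subgroup, which we arrange to be $T$; then $\rank(M)=\rank(K\cap M)$, so Theorem \ref{thm ss contr} is available for $M$. Since the map \eqref{eq tilde Ad} is assumed to descend to $K$, it descends to $K\cap M$, so $\Delta_{\kp\cap\km}$ is a genuine $K\cap M$-representation and each $U\in\widehat{K\cap M}$ occurring in \eqref{eq decomp Delta} gives an honest $M$-equivariant Dirac operator $D^M_U$ on $M/(K\cap M)$ twisted by $U$. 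By $\Z$-linearity of the index and of $\tau^M_g$, together with additivity over the grading, $\tau^M_g(\ind_M(D^M_{\Delta_{\kk/(\kk\cap\km)}\otimes W})) = \sum_U m_U\,\tau^M_g(\ind_M(D^M_U))$, and I would evaluate each term with the elliptic case of Theorem \ref{thm ss contr} under the dictionary $G\rightsquigarrow M$, $K\rightsquigarrow K\cap M$, $G_g\rightsquigarrow M_g$, $K_g\rightsquigarrow K\cap M_g$, $W\rightsquigarrow U$, $\lambda\rightsquigarrow\lambda_U$, $(\rho,\rho_c,\rho_n,\rho_g)\rightsquigarrow(\rho^M,\rho^M_c,\rho^M_n,\rho^M_g)$, and $(W_K,W_{K_g})\rightsquigarrow(W_{K\cap M},W_{K\cap M_g})$. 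Summing against the multiplicities $m_U$ then gives exactly the right-hand side of \eqref{eq ss contr higher}.

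I expect the main obstacle to be pinning down the reduction from \cite{HST20} with the correct normalisation: getting the twisting representation on $M/(K\cap M)$ to be precisely $\Delta_{\kk/(\kk\cap\km)}\otimes W$ with the grading used in \eqref{eq decomp Delta}, and checking that the sign produced by the reduction combines with the sign coming from Theorem \ref{thm ss contr} for $M$ to leave exactly $(-1)^{(\dim(M/(K\cap M))+\dim(M_g/(K\cap M_g)))/2}$ as in \eqref{eq ss contr higher}. A minor additional point is that $M$ is a Langlands Levi factor, possibly disconnected and reductive rather than connected semisimple, so one must check that Theorem \ref{thm ss contr} and the orbital integral $\tau^M_g$ behave as expected for it; this is routine, handled as in \cite{HST20, HW2} by passing to an appropriate subgroup and extending by the compact centre, and amounts only to bookkeeping.
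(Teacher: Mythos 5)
Your proposal is correct and follows essentially the same route as the paper: cite \cite{HST20} for the vanishing when $P$ is not maximal and for the reduction of $\langle \Phi_{P,g}, \ind_G(D_W)\rangle$ to $\tau^M_g$ of the $M$-equivariant index of the Dirac operator on $M/(K\cap M)$ twisted by the virtual representation $\Delta_{\kk/(\kk\cap\km)}\otimes W|_{K\cap M}$, then expand via \eqref{eq decomp Delta} and apply Theorem \ref{thm ss contr} to $M$ with exactly the dictionary you describe. The only cosmetic difference is that the paper disposes of the hyperbolic case directly as a conclusion of Theorem 2.1 in \cite{HST20} rather than routing it through the reduction formula, and it quotes (5.4)--(5.5) of \cite{HST20} for the normalisation you flag as the main obstacle.
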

\begin{proof}
The first statement is one conclusion of Theorem 2.1 in \cite{HST20}.

Now suppose that  $P$ is a maximal cuspidal parabolic subgroup and $g \in T$. Let $\{X_1, \ldots, X_s\}$ be an orthonormal basis of $\kp \cap \km$. For every finite-dimensional  representation $Z$ of $K \cap M$,  consider the Dirac operator
\[
D^M_{Z} = \sum_{j=1}^s R(X_j) \otimes c(X_j) \otimes 1_{Z}
\]
on $(C^{\infty}(M) \otimes \Delta_{\kp \cap\km} \otimes Z)^{K \cap M}$, analogous to \eqref{eq DW}. 
Let $\tau_g^M$ be the orbital integral with respect to $g$ as an element of $M$. Then
(5.4) and (5.5) in \cite{HST20} imply that
\begin{multline*}
\langle \Phi_{P, g}, \ind_G(D_W)\rangle  = \\
\tau^M_g\bigl(\ind_M(D^M_{\Delta^+_{\kk/(\kk \cap \km)}  \otimes W|_{K \cap M}}   ) \bigr)
-
 \tau^M_g\bigl(\ind_M(D^M_{\Delta^-_{\kk/(\kk \cap \km)}  \otimes W|_{K \cap M}}   ) \bigr).
\end{multline*}
Hence \eqref{eq ss contr higher} follows by \eqref{eq decomp Delta} and Theorem \ref{thm ss contr}.
\end{proof}

\begin{example}
If $\rank(G) = \rank(K)$, then $G$ is a (maximal) cuspidal parabolic subgroup of itself, and $\Phi_{G,g} = \tau_g$. So \eqref{eq ss contr higher} reduces to \eqref{eq ss contr}.
\end{example}

\begin{example}\label{ex central higher}
If $P$ is a maximal cuspidal parabolic subgroup and
$g$ lies in the centre of $M$, then analogously to Example \ref{ex g central}, the equality \eqref{eq ss contr higher} reduces to
\[
\langle \Phi_{P, g}, \ind_G(D_W)\rangle  = \sum_{U \in \widehat{K \cap M}} m_U e^{\lambda_U - \rho^M_n}(g)
d^{M}_{\lambda_U+ \rho^M_c}.
\]
\end{example}

\begin{example} 
If $P$ is a maximal cuspidal parabolic subgroup,  $g \in T$ and the powers of $g$ are dense in $T$, then analogously to Example \ref{ex dense powers}, the equality \eqref{eq ss contr higher} reduces to
\begin{multline*}
\langle \Phi_{P, g}, \ind_G(D_W)\rangle  = \\
(-1)^{\dim(M/(K\cap M))/2} \sum_{U \in \widehat{K \cap M}} m_U 
\frac{\sum_{w \in  W_{K \cap M}}  \det(w) e^{w(\lambda_U + \rho^M_c)}(g)  }{e^{  \rho^M}(g)\prod_{\alpha \in R^+(M, T)} (1-e^{-\alpha}(g)) }
= \\ (-1)^{\dim(M/(K\cap M))/2} \frac{\chi_{\Delta_{\kk/(\kk \cap \km)}  } (g)  \chi_W(g) }{\chi_{\Delta_{\kp \cap \km}  } (g) },
\end{multline*}
where $\chi$ denotes the character of a finite-dimensional representation. This is (6.3) in \cite{HST20}.
\end{example}

\subsection{Proofs of the index theorems}\label{sec pf ind thm}

Let $F \subset \widehat{K \times K}$ be the set of  irreducible subrepresentations of $\End(\Delta_{\kp} \otimes W)$. 
For any continuous  trace $\tau$ on $\calS_F(G)$ (with notation as above Corollary \ref{cor Rd trace F}), Theorem \ref{thm def index} implies that
\beq{eq trace index tau}
\tau^{\Delta_{\kp} \otimes W} \circ \Psi_*(\ind_{S, G}(D_W)) = 
\tau^{\Delta_{\kp} \otimes W}  (\Psi(e^{-tD_W^-D_W^+})) -
\tau^{\Delta_{\kp} \otimes W}  (\Psi(e^{-tD_W^+ D_W^-})). 
\eeq
If $g \in G$ is semisimple and $\tau= \tau_g$, then 
Proposition 3.6 in \cite{HW2} implies that the right hand side equals $\tau_g(\ind_G(D_W))$. 
So parts (b) and (c) of Theorem \ref{thm index ss} follow from Theorem \ref{thm ss contr}. Part (a) follows by Example \ref{ex g central}.

Similarly, in the setting of Theorem \ref{thm index ss higher}, we have
\[
\langle \Phi^{\Delta_{\kp} \otimes W}_{P, g}, \ind_{S, G}(D_W) \rangle = 
\langle \Phi_{P, g}, \ind_{G}(D_W) \rangle. 
\]
So Theorem \ref{thm index ss higher} follows from Corollary \ref{cor ind GT higher} and Example \ref{ex central higher}.

To prove Theorem \ref{thm index non-ss}, we just apply the computations in \cite{BM83} to the right hand side of \eqref{eq trace index tau}.
The case of part (a) of Theorem \ref{thm index non-ss} where $G \not= \SU(2n,1)$, and case (b) of this theorem, are (4.5.3) in \cite{BM83}, which comes from \cite{Barbasch79}. 
In the setting of 
part (a) of Theorem \ref{thm index non-ss}  for $G = \SU(2n,1)$, the equality  (4.5.6) in \cite{BM83} states that  the right hand side of \eqref{eq trace index tau} equals the right hand side of \eqref{eq ind thm taun}. Part (c) of Theorem \ref{thm index non-ss} is Proposition 5.6 in \cite{BM83}. (Note that there is a typo in  (7.1.5) in \cite{BM83}: $c_{\mu}(\sigma)>0$ should be $c_{\mu}(\sigma)=0$.)

In the setting of part (d) of Theorem  \ref{thm index non-ss} with $\lambda + \rho_c$ regular,  the right hand side of \eqref{eq trace index tau}, with $\tau = \tau_{\rem}$, is computed on page 194 of \cite{BM83}.
There is a slight error here, however, as pointed out in \cite{Stern90}: the computation of the term involving $W_{\Gamma}$ in \cite{BM83} should be modified as in (6.5) in \cite{Stern90}. This leads to \eqref{eq ind thm T}.
The case of part (d) of Theorem  \ref{thm index non-ss} for real hyperbolic space of dimension at least $4$ is proved on the same page of \cite{BM83}.

 \bibliographystyle{plain}

\bibliography{mybib}

\begin{thebibliography}{10}

\bibitem{Arthur74}
James Arthur.
\newblock Some tempered distributions on semisimple groups of real rank one.
\newblock {\em Ann. of Math. (2)}, 100:553--584, 1974.

\bibitem{ABII}
Michael Atiyah and Raoul Bott.
\newblock A {L}efschetz fixed point formula for elliptic complexes. {II}.
  {A}pplications.
\newblock {\em Ann. of Math. (2)}, 88:451--491, 1968.

\bibitem{Atiyah77}
Michael Atiyah and Wilfried Schmid.
\newblock A geometric construction of the discrete series for semisimple {L}ie
  groups.
\newblock {\em Invent. Math.}, 42:1--62, 1977.

\bibitem{Barbasch79}
Dan Barbasch.
\newblock Fourier inversion for unipotent invariant integrals.
\newblock {\em Trans. Amer. Math. Soc.}, 249(1):51--83, 1979.

\bibitem{BM83}
Dan Barbasch and Henri Moscovici.
\newblock {$L^{2}$}-index and the {S}elberg trace formula.
\newblock {\em J. Funct. Anal.}, 53(2):151--201, 1983.

\bibitem{Connes94}
Paul Baum, Alain Connes, and Nigel Higson.
\newblock Classifying space for proper actions and {$K$}-theory of group
  {$C^\ast$}-algebras.
\newblock In {\em {$C^\ast$}-algebras: 1943--1993 ({S}an {A}ntonio, {TX},
  1993)}, volume 167 of {\em Contemp. Math.}, pages 240--291. American
  Mathematical Society, Providence, RI, 1994.

\bibitem{BismutHypo}
Jean-Michel Bismut.
\newblock {\em Hypoelliptic {L}aplacian and orbital integrals}, volume 177 of
  {\em Annals of Mathematics Studies}.
\newblock Princeton University Press, Princeton, NJ, 2011.

\bibitem{Bradd23}
Jacob Bradd.
\newblock Compatible decomposition of the {C}asselman algebra and the reduced
  group {$C^*$}-algebra of a real reductive group.
\newblock {\em Camb. J. Math.}, 13(1):1--50, 2025.

\bibitem{BH21}
Jacob Bradd and Nigel Higson.
\newblock On {N}ovodvorskii's theorem and the {O}ka principle.
\newblock {\em Eur. J. Math.}, 7(4):1729--1752, 2021.

\bibitem{Casselman89}
William Casselman.
\newblock Introduction to the {S}chwartz space of {$\Gamma\backslash G$}.
\newblock {\em Canad. J. Math.}, 41(2):285--320, 1989.

\bibitem{Connes82}
Alain Connes and Henri Moscovici.
\newblock The {$L^{2}$}-index theorem for homogeneous spaces of {L}ie groups.
\newblock {\em Ann. of Math. (2)}, 115(2):291--330, 1982.

\bibitem{CM90}
Alain Connes and Henri Moscovici.
\newblock Cyclic cohomology, the {N}ovikov conjecture and hyperbolic groups.
\newblock {\em Topology}, 29(3):345--388, 1990.

\bibitem{GR70}
H.~Garland and M.~S. Raghunathan.
\newblock Fundamental domains for lattices in ({R}-)rank {$1$} semisimple {L}ie
  groups.
\newblock {\em Ann. of Math. (2)}, 92:279--326, 1970.

\bibitem{GHW25a}
Hao Guo, Peter Hochs, and Hang Wang.
\newblock A higher index and rapidly decaying kernels.
\newblock ArXiv preprint, 2025.

\bibitem{HCDSII}
{Harish-Chandra}.
\newblock {Discrete series for semisimple Lie groups. II: Explicit
  determination of the characters.}
\newblock {\em {Acta Math.}}, 116:1--111, 1966.

\bibitem{HST20}
Peter Hochs, Yanli Song, and Xiang Tang.
\newblock An index theorem for higher orbital integrals.
\newblock {\em Math. Ann.}, 382(1-2):169--202, 2022.

\bibitem{HW2}
Peter Hochs and Hang Wang.
\newblock A fixed point formula and {H}arish-{C}handra's character formula.
\newblock {\em Proc. London Math. Soc.}, 00(3):1--32, 2017.

\bibitem{HW}
Peter Hochs and Hang Wang.
\newblock A fixed point theorem on noncompact manifolds.
\newblock {\em Ann. $K$-theory}, 2(3):235--286, 2018.

\bibitem{HW4}
Peter Hochs and Hang Wang.
\newblock Orbital integrals and {$K$}-theory classes.
\newblock {\em Ann. K-Theory}, 4(2):185--209, 2019.

\bibitem{Ji98}
Lizhen Ji.
\newblock The trace class conjecture for arithmetic groups.
\newblock {\em J. Differential Geom.}, 48(1):165--203, 1998.

\bibitem{Kasparov83}
Gennadi Kasparov.
\newblock Index of invariant elliptic operators, {$K$}-theory and
  representations of {L}ie groups.
\newblock {\em Dokl. Akad. Nauk SSSR}, 268(3):533--537, 1983.

\bibitem{Knapp}
Anthony Knapp.
\newblock {\em Representation theory of semisimple groups}.
\newblock Princeton Landmarks in Mathematics. Princeton University Press,
  Princeton, Oxford, 1986.

\bibitem{Knapp96}
Anthony~W. Knapp.
\newblock {\em Lie groups beyond an introduction}, volume 140 of {\em Progress
  in Mathematics}.
\newblock Birkh\"{a}user Boston, Inc., Boston, MA, 1996.

\bibitem{Lafforgue02}
Vincent Lafforgue.
\newblock Banach {KK}-theory and the {B}aum-{C}onnes conjecture.
\newblock In {\em Proceedings of the {I}nternational {C}ongress of
  {M}athematicians, {V}ol. {II} ({B}eijing, 2002)}, pages 795--812. Higher Ed.
  Press, Beijing, 2002.

\bibitem{Moscovici82}
Henri Moscovici.
\newblock {$L^{2}$}-index of elliptic operators on locally symmetric spaces of
  finite volume.
\newblock In {\em Operator algebras and {$K$}-theory ({S}an {F}rancisco,
  {C}alif., 1981)}, volume~10 of {\em Contemp. Math.}, pages 129--137. Amer.
  Math. Soc., Providence, R.I., 1982.

\bibitem{Muller98}
W.~M\"{u}ller.
\newblock The trace class conjecture in the theory of automorphic forms. {II}.
\newblock {\em Geom. Funct. Anal.}, 8(2):315--355, 1998.

\bibitem{Muller84}
Werner M\"{u}ller.
\newblock Signature defects of cusps of {H}ilbert modular varieties and values
  of {$L$}-series at {$s=1$}.
\newblock {\em J. Differential Geom.}, 20(1):55--119, 1984.

\bibitem{Muller89}
Werner M\"{u}ller.
\newblock The trace class conjecture in the theory of automorphic forms.
\newblock {\em Ann. of Math. (2)}, 130(3):473--529, 1989.

\bibitem{OW78}
M.~Scott Osborne and Garth Warner.
\newblock Multiplicities of the integrable discrete series: the case of a
  nonuniform lattice in an {$R$}-rank one semisimple group.
\newblock {\em J. Functional Analysis}, 30(3):287--310, 1978.

\bibitem{Parthasarathy72}
Rajagopalan Parthasarathy.
\newblock Dirac operator and the discrete series.
\newblock {\em Ann. of Math. (2)}, 96:1--30, 1972.

\bibitem{Roe02}
John Roe.
\newblock Comparing analytic assembly maps.
\newblock {\em Q. J. Math.}, 53(2):241--248, 2002.

\bibitem{ST19}
Yanli Song and Xiang Tang.
\newblock Higher orbital integrals, cyclic cocycles and noncommutative
  geometry.
\newblock {\em Forum Math. Sigma}, 13:Paper No. e37, 43, 2025.

\bibitem{Stern89}
Mark Stern.
\newblock {$L^2$}-index theorems on locally symmetric spaces.
\newblock {\em Invent. Math.}, 96(2):231--282, 1989.

\bibitem{Stern90}
Mark Stern.
\newblock Eta invariants and {H}ermitian locally symmetric spaces.
\newblock {\em J. Differential Geom.}, 31(3):771--789, 1990.

\bibitem{Wang14}
Hang Wang.
\newblock {$L^2$}-index formula for proper cocompact group actions.
\newblock {\em J. Noncommut. Geom.}, 8(2):393--432, 2014.

\bibitem{Warner79}
Garth Warner.
\newblock Selberg's trace formula for nonuniform lattices: the {$R$}-rank one
  case.
\newblock In {\em Studies in algebra and number theory}, Adv. in Math. Suppl.
  Stud., 6, pages 1--142. 1979.

\bibitem{Wassermann87}
Antony Wassermann.
\newblock Une d\'emonstration de la conjecture de {C}onnes-{K}asparov pour les
  groupes de {L}ie lin\'eaires connexes r\'eductifs.
\newblock {\em C. R. Acad. Sci. Paris S\'er. I Math.}, 304(18):559--562, 1987.

\bibitem{WY20}
Rufus Willett and Guoliang Yu.
\newblock {\em Higher index theory}, volume 189 of {\em Cambridge Studies in
  Advanced Mathematics}.
\newblock Cambridge University Press, Cambridge, 2020.

\end{thebibliography}

\end{document}